\newif\ifPDF
\newtheorem{theorem}{Theorem}[section]
\newtheorem{corollary}[theorem]{Corollary}
\newtheorem{lemma}[theorem]{Lemma}
\newtheorem{proposition}[theorem]{Proposition}
\theoremstyle{definition}
\newtheorem{definition}[theorem]{Definition}
\newtheorem{remark}[theorem]{Remark}
\newtheorem{example}[theorem]{Example}
\numberwithin{equation}{section}
\theoremstyle{definition}
\newcommand{\ol}{\overline}
\newcommand{\wt}{\widetilde}
\newcommand{\N}{\mathbb N}
\newcommand{\R}{\mathbb R}
\newcommand{\F}{\mathcal F}
\newcommand{\B}{\mathcal B}
\newcommand{\s}{\sigma}
\newcommand{\sB}{{\sigma^{-1}(\B)}}
\newcommand{\be}{\begin{equation}}
\newcommand{\ee}{\end{equation}}
\newcommand{\ba}{\begin{aligned}}
\newcommand{\ea}{\end{aligned}}
\newcommand{\sms}{(X, \mathcal B, \mu)}
\newcommand{\FXB}{{\mathcal F(X, \B)}}
\def\csi1{\circ\sigma^{-1}}
\newcommand{\sqva}{\sqrt{\varphi}} 
\newcommand{\ignore}[1]{}
\def\va{\varphi}
\def\csi1{\circ\sigma^{-1}}
\def\cs{\circ\sigma}
\def\ol{\overline}
\def\wt{\widetilde}
\def\wh{\widehat}
\def\mc{\mathcal}
\begin{document}

\title[MRS, endomorphisms, and Cuntz relations] 
{Measurable multiresolution systems, endomorphisms,
and representations of \\ Cuntz relations}


\author{Sergey Bezuglyi and Palle E.T. Jorgensen}
\address{University of Iowa, Iowa City, Iowa, USA}
\email{sergii-bezuglyi@uiowa.edu}
\email{palle-jorgensen@uiowa.edu}

\thanks{}
\keywords{Measure spaces, endomorphisms, multiresolution, 
wavelet filters, Cuntz relations, Radon-Nikodym derivative, Markovian functions}
\date{\today}
\subjclass[2020]{46G12, 37D45, 20K30, 22E66}
\dedicatory{}
\commby{}


\begin{abstract}

The purpose of this paper is to present new classes 
of function systems as part of multiresolution analyses. 
Our approach is representation theoretic, and it 
makes use of generalized multiresolution function systems 
(MRSs). 
It further entails new ideas from measurable 
endomorphisms-dynamics. Our results yield applications that 
are not amenable to more traditional techniques used on metric spaces. 
As the main tool in our approach, we make precise new 
classes 
of generalized MRSs which arise directly from a dynamical 
theory approach to the study of surjective endomorphisms on  measure spaces. In particular, we give the necessary and sufficient conditions for a family of functions to define generators 
of Cuntz relations. We find an explicit description of the 
set of generalized wavelet filters. 
Our results are motivated in part by 
analyses of sub-band filters in signal/image processing. 
But our paper goes further, and it applies to such 
wider contexts as measurable dynamical systems, and complex 
dynamics.

A unifying theme in our results is a new analysis of 
endomorphisms in 
general measure space, and its connection to 
multi-resolutions, to 
representation theory, and generalized wavelet systems.

\end{abstract}

\maketitle

\tableofcontents

\setcounter{tocdepth}{1}

\section{Introduction}

The present paper is focused on relationships between the 
following objects: surjective endomorphisms of a measure 
space $\sms$, 
transfer operators in $L^2\sms$, generalized wavelet 
filters,  Markovian functions, and 
representations of the Cuntz relations.

Our analysis of transformations of a measure space, 
associated transfer operators, and
representations of the Cuntz algebras derive 
from a new analysis of endomorphisms in general measure 
space, presented here. While various special cases of 
endomorphisms have been studied in earlier work, we call 
attention here to three new elements: (i)  our context 
is that of the most general transformations of a
measure spaces; (ii) we 
introduce a new harmonic analysis into the problem via 
representations of Cuntz algebras; and (iii) our 
representations of Cuntz algebras for the purpose arise 
directly from the endomorphism at hand, $\sigma$, and a 
choice of a quasi-invariant measure $\mu$. From this, we 
then identify a new construction of an infinite-dimensional 
manifold  $\mathfrak{M}$ of generalized wavelet filters, and 
a transitive action of a canonical group $\mc G$, acting on 
$\mathfrak M$, and depending only on the given pair 
$\sigma$ (endomorphisms), and $\mu$ (measure). The 
representations of the particular Cuntz algebra (depending 
on $\sigma$) in turn define endomorphisms of $B(L^2(\mu))$, 
the $C^*$-algebra of all bounded operators in $L^2(\mu))$, 
and with each non-commutative endomorphism extending the 
initial endomorphism $\sigma$.

We present a new class of function systems $\mathfrak M$ 
as part of 
multiresolution analyses (MRAs), motivated in part by 
\cite{AlpayColombo2022}, \cite{PickloRyan2022},
\cite{ChristoffersonDutkay2022}. 
Our approach is based on methods of the representation 
theory, 
and it makes use of generalized iterated function systems 
related to a surjective endomorphism of a measure 
space. For background on the representation theory, we 
refer the readers to \cite{Jorgensen2018},
\cite{DutkayJorgensenSilvestrov2012}, 
\cite{JorgensenSong2018}.  
Our results for these generalized MRSs further entail new 
ideas from measurable dynamics, see e.g.,  
\cite{AlpayJorgensenLewkowicz2017},
\cite{AlpayJorgensenLewkowicz2018},
\cite{Andrianov2022}, \cite{BhatDar2022}. 
While our general focus is on branching systems in the 
measurable category, our applications are not amenable to 
more traditional metric techniques such as e.g., 
\cite{FengSimon2022}, \cite{Roychowdhury-2_2022}, 
\cite{MedhiViswanathan2023}.
We turn to new classes of generalized iterated function
systems which arise 
directly from a more general dynamical theory approach via 
a systematic study of endomorphisms in measure spaces. We 
are motivated in part by analyses of sub-band filters in 
signal/image processing, see e.g., 
\cite{DutkayJorgensen_2007},  
\cite{AlpayJorgensenLewkowicz2018}, 
\cite{Baggett_et_al2010}, 
\cite{BaggettMerrillPackerRamsay2012}.
But our paper goes further, and it encompasses such wider 
contexts as measurable dynamical systems and complex 
dynamics. The corresponding literature on wavelet filters, 
representations of Cuntz algebras, iterated function 
systems, transfer operators, and other adjacent areas are
very extensive; we mention here the following sources where
the reader can find more details and alternative approaches:
\cite{BratteliJorgensen_1997}, \cite{BratteliJorgensen_1999} 
\cite{BratteliJorgensen(1997)}, 
\cite{JorgensenTian2019}, \cite{BratteliJorgensen1997},
\cite{Jorgensen1996}, 
\cite{Jorgensen2001}, \cite{DutkayJorgensen_2007}, 
\cite{DutkayJorgensen2014}, \cite{DutkayJorgensen_2014}, 
\cite{DutkayJorgensen2006}, \cite{DutkayJorgensen2014},
\cite{DutkayJorgensen2015}, \cite{JorgensenPaolucci2011},
\cite{JorgensenKornelsonShuman2007}, 
\cite{JorgenssenKornelsonShuman2011}, 
\cite{JorgensenTian2017}.

As noted in the papers cited above, the traditional 
approach to iterated function systems, or more generally to 
semi-branching function systems, the starting point is 
typically a fixed system of maps that can be shown to admit 
limits in the form of attractors and measures invariant 
with respect to iterated function systems. These 
constructions are typically based on metric considerations, 
and they play a big role in such diverse applications as 
(fractal) harmonic analysis, graph Laplacian, boundaries, 
and 
analysis of geometries which are given by classes of 
self-similarity. Cantor and Sierpinski constructions are 
cases in point. The corresponding IFS can be shown in turn 
admit realizations in shift dynamical systems.

Our present approach is the opposite: we begin with a 
consideration of endomorphisms in measure spaces (see the 
definition in Subsection 
\ref{ssect endom}),  and of associated measurable 
partitions (Subsection \ref{subsect Meas part}). 
With this as starting point, we then introduce several 
bounded operators which will define 
representations of a non-abelian $C^*$-algebras given by 
generators and relations, called Cuntz algebras, 
Sections \ref{sect End Operators} and \ref{sect q-inv}.
and 4. There are two advantages to this approach, (i) it 
allows for a much wider family of (generalized) MRSs, and 
(ii) it also offers new and direct tools for attacking the 
corresponding harmonic analysis questions. 
Finally, we give an explicit description of the set of
generalized wavelet filters, see Section 
\ref{sect wavelet filters}. 

Measurable transformations of a standard measure space 
$\sms$ is the central concept of the ergodic theory. 
Invertible transformations (automorphisms) and their 
properties have been extensively studied from various 
points of view. The study of non-invertible transformations
(endomorphisms) has been less popular than that of 
automorphisms although their role 
in dynamics and adjoint areas is extremely important. 
In particular, they are used in the construction of iterated 
function systems (IFSs), transfer operators, and wavelet 
filters. We refer here to several recent books dealing 
with endomorphisms and their applications in the operator 
theory \cite{Jorgensen2006(book)}, 
\cite{PrzytyckiUrbanski2010}, 
\cite{BezuglyiJorgensen2018(book)}, 
 \cite{Hawkins2021}, \cite{Eisner(book)2015},
\cite{Bruin(book)2022}, \cite{Urbanski(book)2022}.

Using an analysis of endomorphisms of a standard measure 
space, we associate with every endomorphism several bounded 
operators acting in $L^2\sms$.
Non-singular endomorphisms $\sigma$ of a measure space 
$\sms$ are naturally divided 
into two classes: if $\mu = \mu\csi1$, then $\s$ is called 
\textit{measure-preserving}; if $\mu \sim \mu\csi1$, 
then the measure $\mu$ is called \textit{quasi-invariant} 
with respect to $\s$. If additionally, $\mu\cs \sim \mu$ on 
$\sB$, then $\s$ is called 
\textit{forward quasi-invariant}. In this case, there are 
Borel functions $\va$, called \textit{Markovian} functions,
such that 
\be\label{eq:forward RN}
\int_X (f\cs) \va\; d\mu = \int_X f \; d\mu.
\ee
This relation is the basis for defining isometric operators: 
the \textit{composition operator}  
$S_\s :f \mapsto f\cs$ in the case of an invariant measure
$\mu$ and the \textit{weighted composition 
operator} $S_\va : f \mapsto \sqva (f\cs)$ for a
quasi-invariant measure $\mu$ (here $\va$ is Markovian).
These operators, together with transfer operators, will 
play key roles in our constructions. 

We formulate now our \textit{main results} and outline 
the paper's organization.  
In Section \ref{sect Prelim}, we define the main objects of
this paper. They are: a standard measure space, measurable 
partitions, canonical systems of measures, subjective 
endomorphisms, invariant and quasi-invariant measures, and
Markovian functions. All these notions are used in the next 
sections. Section \ref{sect Prelim} should be viewed as
a brief survey on endomorphisms and related notions. 
Section \ref{sect End Operators} is focusing on the study
of linear operators generated by an endomorphism of 
a measure space. We define abstract transfer operators 
acting on bounded Borel functions and consider their 
properties. If the operator $S_\s$ is considered in 
$L^2\sms$ where $\mu$ is $\s$-invariant, then $S^*_\s$ is 
a transfer operator which has interesting properties,
see Theorems \ref{thm R = S^*}, \ref{thm Phi in L2},
\ref{thm om_mu and om_nu}, and \ref{thm E vs E*}. 
In particular, the operator $S^*_\s$ coincides with 
a transfer operator $R_\s$ defined by the measurable 
partition into pre-images of $\s$. We prove also 
similar results for weighted composition operators.
Section \ref{sect q-inv} contains the principal theorems 
connecting wavelet filters with representations of the 
Cuntz relations. We recall that a family of isometries 
$\{T_i : i \in \Lambda\}$ defines the Cuntz relations if
$\sum_{i\in \Lambda} T_iT^*_i = \mathbb I$ and the 
projections $T_iT^*_i$ are mutually orthogonal for 
different indexes where $\Lambda$ is finite or countable. 
Let $\va$ be a Markovian function and $m$ a complex-valued 
function such that $S^*_\va(\sqva |m|^2) = \mathbbm 1$. 
Define $T_m (f) = m S_\va(f)$. Then $T_m$ an isometry in 
$L^2(\mu)$. We prove the following results (see Theorem 
\ref{thm main1}).

\begin{theorem} 
Let $\sigma$ be an onto 
endomorphism of $\sms$ where $\mu$ is quasi-invariant with 
respect to $\s$. 
Let $\{m_i : i \in \Lambda\}$ be a family of
complex-valued functions.  
The operators $\{T_{m_i}, i \in \Lambda$\} generate  a 
representation of the Cuntz algebra $\mathcal O_{|\Lambda|}$ 
if and only if 
$$
(i)\ \ S^*_\va (\sqva \; \ol m_j m_i) = \delta_{ij} 
\mathbbm 1,
$$
$$
(ii) \ \ \sum_{i\in \Lambda} m_i \mathbb E_\va( \ol m_i f) 
= f,
\ \ \ f \in L^2(\mu).
$$ 
\end{theorem}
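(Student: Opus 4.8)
The plan is to reduce the Cuntz relations for the family $\{T_{m_i} : i \in \Lambda\}$ to two operator identities on $L^2(\mu)$ and then to translate each of them into the pointwise conditions (i) and (ii) by means of the properties of the transfer operator $S_\va^*$ established in Section \ref{sect End Operators}.

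First I would recall, using the definition of the Cuntz relations given above the theorem, that $\{T_{m_i}\}$ generates a representation of $\mathcal O_{|\Lambda|}$ iff each $T_{m_i}$ is an isometry, the range projections $T_{m_i}T_{m_i}^*$ are mutually orthogonal, and $\sum_{i\in\Lambda} T_{m_i}T_{m_i}^* = \mathbb I$ (with strong convergence of the sum when $\Lambda$ is infinite). Since a sum of projections equal to $\mathbb I$ automatically forces the projections to be pairwise orthogonal, this reduces to just two requirements: (a) $T_{m_i}^* T_{m_i} = \mathbb I$ for every $i$, and (b) $\sum_{i\in\Lambda} T_{m_i}T_{m_i}^* = \mathbb I$. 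So it suffices to characterize (a) and (b); the off-diagonal part of (i) will drop out as a by-product.

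Next I would compute adjoints. From $T_{m_i}f = m_i S_\va f = m_i\sqva\,(f\cs)$ one reads off $T_{m_i}^* g = S_\va^*(\ol m_i\, g)$, and hence
\[
T_{m_j}^* T_{m_i} f = S_\va^*\big(\ol m_j\, m_i\,\sqva\,(f\cs)\big) = S_\va^*\big((f\cs)\cdot \sqva\,\ol m_j m_i\big).
\]
Applying the transfer-operator (``pull-out'') identity $S_\va^*\big((h\cs)\,k\big) = h\,S_\va^*(k)$, proved in Section \ref{sect End Operators}, with $h = f$ and $k = \sqva\,\ol m_j m_i$, we get $T_{m_j}^* T_{m_i} f = f\cdot S_\va^*(\sqva\,\ol m_j m_i)$. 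Thus $T_{m_j}^* T_{m_i}$ is multiplication by the function $S_\va^*(\sqva\,\ol m_j m_i)$, and it equals $\delta_{ij}\mathbb I$ precisely when $S_\va^*(\sqva\,\ol m_j m_i) = \delta_{ij}\mathbbm 1$ $\mu$-a.e., which is (i); the diagonal case $i = j$ says exactly that each $T_{m_i}$ is an isometry, i.e.\ is requirement (a). For requirement (b), $T_{m_i}T_{m_i}^* f = m_i\, S_\va S_\va^*(\ol m_i f) = m_i\,\mathbb E_\va(\ol m_i f)$, using the identification $\mathbb E_\va = S_\va S_\va^*$ (the range projection of $S_\va$, see Section \ref{sect End Operators}); summing over $i$ gives $\sum_{i\in\Lambda} T_{m_i}T_{m_i}^* f = \sum_{i\in\Lambda} m_i\,\mathbb E_\va(\ol m_i f)$, so (b) holds for all $f\in L^2(\mu)$ iff (ii) does. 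Putting the pieces together: if the $T_{m_i}$ form a Cuntz representation then (a) and (b) hold, hence so do (i) and (ii); conversely, given (i) and (ii), the diagonal part of (i) makes every $T_{m_i}$ an isometry and (ii) gives $\sum_i T_{m_i}T_{m_i}^* = \mathbb I$, which are the Cuntz relations (the off-diagonal part of (i) being then recovered automatically).

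The hard part will not be the algebraic manipulation but the measure-theoretic bookkeeping around $S_\va^*$: one has to make sure that the pull-out identity and the formula $T_{m_i}^* = S_\va^* M_{\ol m_i}$ (with $M_{\ol m_i}$ multiplication by $\ol m_i$) are valid for the generally unbounded $L^2$-functions and the Markovian weight $\va$ occurring here, not only for bounded Borel functions, and one must move carefully between the operator statement ``$M_g = c\,\mathbb I$'' and the pointwise statement ``$g = c$ $\mu$-almost everywhere''. A secondary technical point is the strong-operator convergence of $\sum_{i\in\Lambda} T_{m_i}T_{m_i}^*$ when $\Lambda$ is countably infinite, which is handled through the mutual orthogonality of the range projections, the partial sums then forming an increasing net of projections dominated by $\mathbb I$.
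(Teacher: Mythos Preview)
Your proposal is correct and follows essentially the same route as the paper: both arguments rest on the two identities $T_{m_j}^*T_{m_i}f = f\,S_\va^*(\sqva\,\ol m_j m_i)$ and $T_{m_i}T_{m_i}^*f = m_i\,\mathbb E_\va(\ol m_i f)$, which the paper records as Lemma~\ref{lem T*T} and you rederive inline. The only organizational difference is that the paper verifies orthogonality of the range projections directly via an inner-product computation reducing to $T_{m_j}^*T_{m_i}$, whereas you invoke the general fact that a sum of projections equal to $\mathbb I$ forces pairwise orthogonality; both are fine.
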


Here $\mathbb E_\va = S_\va S^*_\va$ is the orthogonal 
projection from $L^2(\mu)$ onto a subspace $\mc H_\va$.

As a corollary, we have the following decomposition of
$L^2(\mu)$ in the case of $\s$-invariant measure $\mu$:
$$
L^2(\mu) =\bigoplus_{i\in \Lambda} \ m_i L^2(X, \sB, \mu).
$$

In Section \ref{sect wavelet filters}, we focus on 
finding a description of the families of functions
$\underline m = (m_i) \in \mathfrak M_\va$ satisfying the 
above theorem.
Let $\mc G$ be the group of Borel functions with values in 
the unitary operators on $\ell^2(\Lambda)$. 
Then we prove the following:

\begin{theorem} [Theorem \ref{thm action}] 
(1) The set $\mathfrak M_\va$ is isomorphic (as a set) to the
loop group $\mc G$. 

(2) For every element $G = (g_{ij})$ of the loop group, 
there exists a wavelet filter $\underline m$ such that 
$g_{ij} = S^*_{\va}(\sqva\; m_i \ol m_j)$.
\end{theorem}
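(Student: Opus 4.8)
The plan is to prove the two parts by setting up an explicit bijection between $\mathfrak M_\va$ and $\mc G$ and checking it respects the defining (Cuntz) conditions from Theorem~\ref{thm main1}. First I would recall that by that theorem, a family $\underline m = (m_i)_{i\in\Lambda}$ lies in $\mathfrak M_\va$ precisely when $S^*_\va(\sqva\,\ol m_j m_i) = \delta_{ij}\mathbbm 1$ together with the completeness relation $\sum_i m_i \mathbb E_\va(\ol m_i f) = f$. Fix one ``base'' wavelet filter $\underline m^{(0)} = (m_i^{(0)})$ — for instance the canonical one coming from the measurable partition of $X$ into the $\sigma$-preimages of points, which exists by the constructions of Section~\ref{sect q-inv} — so that $\mathfrak M_\va \neq \emptyset$. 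Given any other $\underline m \in \mathfrak M_\va$, define the matrix of Borel functions $G = (g_{ij})$ by $g_{ij} := S^*_\va(\sqva\, m_i \ol m_j^{(0)})$; the content of part~(2) is that this $G$ is exactly the data one can prescribe freely, so I would actually run the argument in the direction of reconstructing $\underline m$ from $G$.

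The key steps, in order, are: (a) show that for $\underline m \in \mathfrak M_\va$ the matrix $G = (g_{ij})$ takes values (a.e.) in the unitary group of $\ell^2(\Lambda)$ — this is where the two conditions of Theorem~\ref{thm main1} get used: the first (orthonormality $S^*_\va(\sqva\,\ol m_j m_i)=\delta_{ij}$) combined with the intertwining property $S^*_\va(\sqva\,(\psi\cs)\, h) = \psi\, S^*_\va(\sqva\, h)$ of the transfer operator (a property established in Section~\ref{sect End Operators}) forces $\sum_k g_{ik}\ol g_{jk} = \delta_{ij}$ pointwise, i.e. $GG^* = \mathbb I$; the completeness relation gives $G^*G = \mathbb I$. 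Hence $G \in \mc G$. (b) Conversely, given $G = (g_{ij}) \in \mc G$, set $m_i := \sum_j g_{ij} m_j^{(0)}$ (a finite or $\ell^2$-convergent sum depending on $|\Lambda|$; convergence in $L^2(\mu)$ follows from unitarity of $G$ and the fact that the $m_j^{(0)}$ form, fibrewise, an orthonormal system). Then verify directly, again using the intertwining/multiplicativity of $S^*_\va$ over functions pulled back by $\sigma$, that $\underline m = (m_i)$ satisfies conditions (i) and (ii) of Theorem~\ref{thm main1}, hence $\underline m \in \mathfrak M_\va$; and recompute $S^*_\va(\sqva\, m_i \ol m_j) = \sum_{k,l} g_{ik}\ol g_{jl}\, S^*_\va(\sqva\, m_k^{(0)}\ol m_l^{(0)}) = \sum_k g_{ik}\ol g_{jk} = \delta_{ij}$, and similarly $S^*_\va(\sqva\, m_i \ol m_j^{(0)}) = g_{ij}$, which is the formula asserted in part~(2). (c) Check that the two assignments $\underline m \mapsto G$ and $G \mapsto \underline m$ are mutually inverse, giving the set isomorphism of part~(1); injectivity in one direction uses that $\underline m^{(0)}$ is ``cyclic'' in the sense that the relation $\sum_j g_{ij} m_j^{(0)} = 0$ with $G$ unitary forces $g_{ij}=0$ a.e.

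The main obstacle I anticipate is step~(a)--(b) in the countably infinite case $|\Lambda| = \infty$: one must be careful that the fibrewise orthonormal family $(m_j^{(0)})$ genuinely spans the ``correct'' fibres (the fibres of the canonical system of measures for the partition into $\sigma$-preimages), so that the map $\ell^2(\Lambda) \to L^2(\mu)$, $(\xi_j) \mapsto \sum_j \xi_j m_j^{(0)}$, is a fibrewise isometry onto its range and the series manipulations above are legitimate — this is really a disintegration/Fubini argument over the measurable partition, and the interchange of $S^*_\va$ with infinite sums needs the contractivity of the transfer operator plus dominated convergence. A secondary technical point is checking that $G = (g_{ij})$ is genuinely a Borel (not merely measurable) function into the unitaries when $\Lambda$ is infinite, so that it lands in $\mc G$ as defined; this follows from the fact that each entry $g_{ij}$ is a Borel function by construction and unitarity is a closed (hence Borel) condition on $\ell^2(\Lambda)$. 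Once the disintegration bookkeeping is in place, the algebra in steps~(a), (b), (c) is routine.
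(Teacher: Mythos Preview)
Your overall strategy matches the paper's: fix a base filter, associate to any other filter a matrix $G$ of functions via $S^*_\va(\sqva\,\cdot\,)$, and show this sets up a bijection with $\mc G$. The paper packages this as establishing a free and transitive action of $\mc G$ on $\mathfrak M_\va$ (Lemmas \ref{lem m^G}--\ref{lem transitive}); you do the bijection by hand. These are equivalent.

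There is, however, a concrete error in your step (b). You define the inverse map by $m_i := \sum_j g_{ij}\, m_j^{(0)}$, and then compute
\[
S^*_\va(\sqva\, m_i\, \ol m_j)=\sum_{k,l} g_{ik}\,\ol g_{jl}\; S^*_\va\bigl(\sqva\, m_k^{(0)}\,\ol m_l^{(0)}\bigr).
\]
But the pull-out property you yourself quote has the form $S^*_\va\bigl((\psi\circ\sigma)\,h\bigr)=\psi\,S^*_\va(h)$: it only lets you extract factors that are $\sigma^{-1}(\B)$-measurable. For an arbitrary $G\in\mc G$ the entries $g_{ij}$ are Borel functions on $X$, not of the form $\psi\circ\sigma$, so the displayed step is illegitimate as written. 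The correct formula (and the one the paper uses in \eqref{eq action G}) is
\[
m_i \;=\; \sum_{j\in\Lambda} \bigl(g_{ij}\circ\sigma\bigr)\, m_j^{(0)}
\]
(up to the transpose/conjugate convention). With the $\circ\,\sigma$ inserted, the pull-out applies and your verification of (i) and (ii) in Theorem \ref{thm main1} goes through verbatim. The same correction is needed in step (c): to check that the two maps are mutually inverse you should be comparing $m_i$ with $\sum_k (g_{ik}\circ\sigma)\,m_k^{(0)}$, and the identity $m_i=\sum_k (g_{ik}\circ\sigma)\,m_k^{(0)}$ follows from condition (ii) for $\underline m^{(0)}$ once you note that $(g_{ik}\circ\sigma)=S_\va S^*_\va(\sqva\, m_i\,\ol m_k^{(0)})/\sqva=\mathbb E_\va(m_i\,\ol m_k^{(0)})/\sqva$ (equivalently, work in $\mathfrak M_\s$ via Lemma \ref{lem iso of actions}, where $(g_{ik}\circ\sigma)=\mathbb E_\s(n_i\,\ol n_k^{(0)})$).

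Your step (a) sketch for unitarity of $G$ is also too vague as stated; the clean way to get $GG^*=\mathbb I$ and $G^*G=\mathbb I$ is precisely to feed the corrected reconstruction formula back into conditions (i) and (ii) for $\underline m$ and for $\underline m^{(0)}$ respectively. Once you insert the missing $\circ\,\sigma$, everything you wrote is correct and your convergence/measurability concerns in the countable case are the right ones to flag.
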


Let functions $(m_i : i \in \Lambda)$ satisfy the property.
\be\label{1.2}
S_\s^*(|m_i|^2) = \mathbbm 1
\ee
and $S_i(f) = m_i (f\cs)$ is an isometry on $L^2(\mu)$.

\begin{theorem} [Theorem \ref{thm main 2}]
Let $(m_i : i\in \Lambda)$ be a set of cyclic vectors 
for a representation of $L^\infty(X, \sB, \mu)$ on 
$L^2(\mu)$ that 
satisfies   \eqref{1.2}. 
Then $\underline m = (m_i)$ is a wavelet 
filter if and only if $\sum_{i \in \Lambda} S^*_iS_i = 
\mathbb I$. In other words, $\underline m \in \mathfrak M$ if
and only if the operators $S_i$ are the generators of a 
representation of the Cuntz algebra $\mathcal O_{|\Lambda|}$.
\end{theorem}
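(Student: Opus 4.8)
The plan is to recognize Theorem~\ref{thm main 2} as a specialization of the general characterization in Theorem~\ref{thm main1}, restricting attention to the invariant-measure setting where $\varphi \equiv \mathbbm 1$ is (a constant) Markovian function. Under that identification $S_\varphi$ becomes the ordinary composition operator $S_\sigma$, the condition $S_\varphi^*(\sqva\,\ol m_j m_i)=\delta_{ij}\mathbbm 1$ becomes $S_\sigma^*(\ol m_j m_i)=\delta_{ij}\mathbbm 1$, and the hypothesis \eqref{1.2} is exactly the $i=j$ diagonal part of that condition. So the first task is to show that under \eqref{1.2}, the off-diagonal conditions $S_\sigma^*(\ol m_j m_i)=0$ for $i\neq j$ follow automatically from the cyclicity hypothesis together with the assumed Cuntz sum relation — or rather, to split the equivalence into its two directions and see what each uses.

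First I would prove the ``only if'' direction: assuming $\underline m$ is a wavelet filter (i.e. the operators $T_{m_i}$, equivalently $S_i$, satisfy all the Cuntz relations per Theorem~\ref{thm main1}), the relation $\sum_i T_iT_i^* = \mathbb I$ is literally one of the two defining Cuntz relations, so $\sum_i S_i^*S_i$ — wait, this is the isometry relation $S_i^*S_i = \mathbb I$ for each $i$ — and the statement to extract is $\sum_i S_i^* S_i = \mathbb I$, which is NOT automatic and must instead be read off from condition~(ii) of Theorem~\ref{thm main1}. Concretely, I would compute $S_i^* S_i$ in terms of $S_\sigma$ and the multiplication operators $M_{m_i}$: using $S_i = M_{m_i} S_\sigma$ we get $S_i^* S_i = S_\sigma^* M_{|m_i|^2} S_\sigma$, and one shows $\sum_i S_\sigma^* M_{|m_i|^2} S_\sigma = \mathbb I$ is equivalent to condition~(ii) evaluated against the cyclic structure. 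The bridge here is the projection $\mathbb E_\varphi = S_\varphi S_\varphi^*$ and the identity $S_i^* S_j = S_\sigma^* M_{\ol m_i m_j} S_\sigma$ relating the ``word'' operators to conditional expectations onto $\sigma^{-1}(\B)$.

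For the ``if'' direction — assuming $\sum_i S_i^* S_i = \mathbb I$ — the plan is to use cyclicity of the $m_i$ for the representation of $L^\infty(X,\sB,\mu)$ to upgrade this single scalar-type relation to the full pair of conditions (i) and (ii) of Theorem~\ref{thm main1}. The point is that each $m_i$ being cyclic means the closed span of $\{\xi\, m_i : \xi \in L^\infty(X,\sB,\mu)\}$, appropriately interpreted via $S_\sigma$, exhausts the relevant subspace, so the orthogonality $S_\sigma^*(\ol m_i m_j) = \delta_{ij}\mathbbm 1$ can be tested on this dense family; and once (i) holds, the equation $\sum_i S_i^* S_i = \mathbb I$ forces the ranges $S_i L^2(\mu)$ to be mutually orthogonal and to span, which is condition~(ii) rephrased as completeness of the system $\{m_i\}$ of cyclic vectors. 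Then invoking Theorem~\ref{thm main1} (with $\varphi = \mathbbm 1$) gives that the $S_i$ generate a representation of $\mathcal O_{|\Lambda|}$, i.e. $\underline m \in \mathfrak M$.

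The main obstacle I anticipate is the ``if'' direction, specifically showing that $\sum_i S_i^* S_i = \mathbb I$ together with \eqref{1.2} forces the off-diagonal vanishing $S_\sigma^*(\ol m_i m_j)=0$ for $i\neq j$; this genuinely needs the cyclic-vector hypothesis and is where the argument could go wrong if the cyclicity is not exploited correctly. The cleanest route is probably to observe that $\sum_i S_i S_i^*$ is a projection (being a sum of projections onto mutually orthogonal ranges, once one knows the ranges are orthogonal) and that $\sum_i S_i^* S_i = \mathbb I$ is the statement that this projection is everything; then a polarization/positivity argument on $\sum_i S_i^* S_i - \sum_{i,j} (\text{cross terms})$ isolates the off-diagonal pieces and shows they must vanish because each is dominated by a difference of positive operators that sums to zero. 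I would keep an eye on whether ``cyclic vectors for a representation of $L^\infty(X,\sB,\mu)$'' is being used to mean that $\{\xi m_i : \xi, i\}$ spans $L^2(\mu)$ (the natural reading that makes the proof work), and flag that interpretation explicitly at the start of the argument.
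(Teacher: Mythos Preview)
Your reduction to Theorem~\ref{thm main1} (equivalently Theorem~\ref{thm main}) with $\varphi=\mathbbm 1$ is the right frame, but there are two problems with the execution.

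First, the statement as printed contains a typo: the sum should be $\sum_{i\in\Lambda} S_i S_i^* = \mathbb I$, not $\sum_{i\in\Lambda} S_i^* S_i = \mathbb I$. You noticed the inconsistency (``wait, this is the isometry relation'') but then tried to press on with the literal reading anyway. Since each $S_i$ is an isometry by \eqref{1.2}, every $S_i^* S_i$ is already $\mathbb I$, so summing them cannot give $\mathbb I$ unless $|\Lambda|=1$. The paper's own proof confirms the intended relation is $\sum_i S_i S_i^* = \mathbb I$: it computes $m_i\,\mathbb E_\sigma(\ol m_i f) = S_i S_i^*(f)$ and identifies condition~(ii) of Theorem~\ref{thm main} with this sum.

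Second, and more substantively, you have inverted the logical architecture. In the paper, ``cyclic vectors'' means the $m_i$ come from an \emph{orthogonal} decomposition $L^2(\mu) = \bigoplus_{i\in\Lambda} \mc H_i$ into cyclic subspaces for the action of $L^\infty(X,\sB,\mu)$. The mutual orthogonality of the $\mc H_i$ is what directly forces $S_\sigma^*(m_i \ol m_j) = 0$ for $i \neq j$: this is exactly Lemma~\ref{lem:orthogonality}, established before the theorem and with no reference to any sum relation. So condition~(i) of Theorem~\ref{thm main} is already in hand from the hypotheses alone, and the theorem reduces to the equivalence of condition~(ii) with $\sum_i S_i S_i^* = \mathbb I$, which is the one-line identity above. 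Your anticipated ``main obstacle'' --- extracting the off-diagonal vanishing from the sum relation via a polarization/positivity argument --- is therefore not needed at all; the orthogonality is part of the hypothesis, not something to be derived.
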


\section{Basics on endomorphisms}\label{sect Prelim}

In this section, we give  basic definitions and facts 
from the theory of endomorphisms of a standard measure space 
$(X, \B, \mu)$. The notion of an endomorphism is one 
of the central concepts of ergodic theory; the foundations 
and more advanced results on endomorphisms can be found in 
some pioneering papers in ergodic theory and in more recent 
papers and books, see e.g.,  \cite{Rohlin1961}, 
\cite{Hawkins1994}, \cite{CornfeldFominSinai1982}, 
\cite{HawkinsSilva1991}, \cite{Beneteau1996},  
\cite{BruinHawkins2009},  \cite{PrzytyckiUrbanski2010}, 
\cite{Hawkins2021} and the papers
cited therein. The study of endomorphisms is mostly based on 
the notion of a measurable partition of a measure space and 
associated subalgebras of Borel sets. 
A systematic study of measurable dynamical systems based on 
applications of measurable partitions was initiated by Rokhlin in 
\cite{Rohlin_1949, Rohlin1949, Rohlin1961}.

\subsection{Endomorphisms of a measure 
space}\label{ssect endom} 

We begin with the definitions of the main objects considered in 
the paper. 

Let $(X, \B)$ be a \textit{standard Borel space}, i.e., 
$(X, \B)$ is Borel isomorphic to a Polish space with the 
sigma-algebra of Borel sets. 
If $\mu$ is a non-atomic Borel positive measure on $(X, \B)$, 
then $\sms$ is called a \textit{standard Borel space}.  

By an \textit{endomorphism} $\sigma$ of $(X, \B, \mu)$ (or
$(X, \B)$) we mean a measurable  (or Borel) map of $X$ 
\textit{onto} itself ($\s$ is subjective). We will discuss  
various properties of endomorphisms below. In particular, $\s$
defines a partition of $X$ into subsets $\{\s^{-1}(x): x \in X\}$.
Depending on the cardinality of the sets $\s^{-1}(x)$, we call
$\s$ either finite-to-one, or countable-to-one, or 
continuum-to-one. Without loss of generality, we can assume
that the cardinality $|\s^{-1}(x)|$ is constant. The collection
of sets $\sigma^{-1}(A), A \in \B$, forms a $\s$-subalgebra of 
Borel sets which is denoted $\sB$ (below it will be also 
denoted  by $\mc A$ to shorten formulas). 
In general, the set $\sigma(A)$ is not Borel for every $A\in \B$, 
but if $\sigma$ is at most countable-to-one then $\sigma(A)$  is 
automatically Borel. 

Let $End(X, \B)$ denote the set (semigroup) of all 
\textit{surjective endomorphisms} of a standard Borel space 
$(X, \B)$. By $M_1(X)$, we denote the set of all 
\textit{Borel probability non-atomic measures}. An element 
of $M_1(X)$ will be simply called a measure in the paper. 
For $\sigma \in End(X, \B)$ and $\mu \in M_1(X)$, 
we define the measure $\mu\csi1$ where $\mu\csi1(A) := 
\mu(\sigma^{-1}(A))$. Then, the map $ \mu \mapsto \mu\csi1$ 
defines an action of $End(X, \B)$ on $M_1(X)$. We will be 
interested in the following cases: (i) the measure $\mu\csi1$ is 
equivalent to $\mu$, and (ii) $\mu$ is  $\s$-invariant. 
In case (i), we say that an endomorphism  $\s$ is 
\textit{non-singular}, i.e., 
\be\label{eq quai-inv meas}
\mu(A) = 0 \  \Longleftrightarrow \ \mu(\sigma^{-1}(A)) = 
0,\ \ \ \forall A\in \B.
\ee
In other words, the measure $\mu$ is called (backward) 
$\sigma$-\textit{quasi-invariant}, in symbols, $\mu\cs^{-1} \sim 
\mu$. We use the notation $End(X, \B, \mu)$ to denote the
semigroup of all surjective endomorphisms $\sigma$ such that 
$\mu$ is quasi-invariant with respect to $\sigma$.
For a fixed $\mu$,  the set $End(X, \B, \mu)$ contains the 
sub-semigroup $End_\mu$ which consists of the endomorphisms  
preserving $\mu$, i.e., $\mu(A) = \mu\csi1(A)$. 
The set $End_\mu$ can be viewed as the stabilizer of the 
action of $End(X, \B)$ on $M_1(X)$ at  $\mu$.

We will need also the notion of a 
\textit{(forward)  quasi-invariant measure}  $\mu$. 
This means that for every $\mu$-measurable  set 
$A$, the set  $\sigma(A)$ is measurable and $\mu(A) = 0 \ 
  \Longleftrightarrow  \ \mu(\sigma(A)) = 0$. 

 
\begin{lemma}
\label{lem existence of q-inv measure for  sigma}
Let $\sigma$ be a surjective endomorphism of a standard 
Borel space $(X, \B)$. Then $M_1(X)$ always contains a 
$\sigma$-quasi-invariant measure $\mu$.  
\end{lemma}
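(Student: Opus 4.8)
The plan is to construct the quasi-invariant measure as a geometrically weighted average of a non-atomic measure over a two-sided $\sigma$-orbit, imitating the classical recipe used for automorphisms; the only extra work, caused by $\sigma$ being non-invertible, is to manufacture the ``backward'' part of such an orbit. We may assume $X$ is uncountable, so that $M_1(X)\neq\varnothing$.

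First I would fix a $\nu\in M_1(X)$ (subject to a mild extra requirement stated below), put $\mu_0:=\nu$ and $\mu_n:=\nu\circ\sigma^{-n}$ for $n\ge 1$, and build the negative-index terms recursively: since $\sigma$ is onto, every probability measure on $X$ lifts through $\sigma$ (disintegrate over the $\sigma$-fibres and distribute the mass non-atomically inside them --- a measurable selection of fibrewise measures, available by Luzin--Novikov when $\sigma$ is at most countable-to-one and by Jankov--von Neumann in general), and this lift can be taken non-atomic when the lifted measure is. In this way one obtains non-atomic $\mu_{-1},\mu_{-2},\dots$ with $\mu_{-k}\circ\sigma^{-1}=\mu_{-k+1}$, so that $\mu_{n+1}=\mu_n\circ\sigma^{-1}$ for every $n\in\Z$. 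I would then set
\[
\mu:=c\sum_{n\in\Z}2^{-|n|}\,\mu_n,\qquad c:=\Big(\sum_{n\in\Z}2^{-|n|}\Big)^{-1}.
\]

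Checking the quasi-invariance \eqref{eq quai-inv meas} is then a formality: from $\mu_{n+1}=\mu_n\circ\sigma^{-1}$ and a reindexing,
\[
\mu\circ\sigma^{-1}=c\sum_{n\in\Z}2^{-|n|}\,\mu_{n+1}=c\sum_{m\in\Z}2^{-|m-1|}\,\mu_m,
\]
and since $|m-1|$ differs from $|m|$ by at most $1$, one has $\tfrac12\le 2^{-|m-1|}/2^{-|m|}\le 2$ for every $m$; comparing the two series term by term gives $\tfrac12\,\mu\le\mu\circ\sigma^{-1}\le 2\,\mu$, hence $\mu$ and $\mu\circ\sigma^{-1}$ have the same null sets --- provided $\mu$ is non-atomic.

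That proviso is where the real work lies. Non-atomicity of $\mu$ amounts to non-atomicity of each $\mu_n$; for $n<0$ this was built in, and for $n\ge 1$ one has $\mu_n(\{x\})=\nu(\sigma^{-n}\{x\})$, so what is needed is a $\nu\in M_1(X)$ with $\nu(\sigma^{-n}\{x\})=0$ for all $n\ge1$ and all $x\in X$. When $\sigma$ is at most countable-to-one --- in particular in the finite-to-one case that governs wavelet filters and sub-band systems --- every fibre $\sigma^{-n}\{x\}$ is countable, so any non-atomic $\nu$ works and the argument is complete. In the general, continuum-to-one case one first passes to the normalization $|\sigma^{-1}(x)|\equiv\mathrm{const}$ of Subsection~\ref{ssect endom} and then produces a suitable $\nu$ by selecting it compatibly along the fibres of all the iterates $\sigma^n$ (a disintegration/measurable-selection argument). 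I expect this non-atomicity question --- rather than the quasi-invariance, which is essentially free --- to be the main obstacle.
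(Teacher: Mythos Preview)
Your construction is correct and takes a genuinely different route from the paper. The paper does not build the measure by hand; it observes that the orbit equivalence relation $E(\sigma)$ (where $x\sim y$ iff $\sigma^n(x)=\sigma^m(y)$ for some $n,m\ge 0$) is a countable Borel equivalence relation, identifies $\sigma$-quasi-invariance with $E(\sigma)$-quasi-invariance, and then invokes \cite[Proposition~3.1]{DoughertyJacksonKechris1994}, which supplies a non-atomic $E$-quasi-invariant probability measure for any countable Borel equivalence relation $E$. Your weighted-average argument is more elementary and self-contained, and it has the bonus of producing an explicit two-sided bound $\tfrac12\le d(\mu\circ\sigma^{-1})/d\mu\le 2$; the paper's route is much shorter but leans on an external structural result. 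Two remarks on your write-up: (i) the backward lifts $\mu_{-k}$ are most cheaply obtained from a single (universally) measurable section $s$ of $\sigma$ via $\mu_{-k}:=\nu\circ(s^{k})^{-1}$; since any section of $\sigma$ is injective, these pushforwards are automatically non-atomic and no fibrewise spreading is needed; (ii) the difficulty you flag in the continuum-to-one case is not a defect of your method relative to the paper's---the paper's appeal to \cite{DoughertyJacksonKechris1994} also requires $E(\sigma)$ to have countable classes, which happens precisely when $\sigma$ is at most countable-to-one, so both arguments as written are complete in that regime and leave the uncountable-fibre case to further work.
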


\begin{proof}
Every endomorphism  $\sigma$ generates a countable Borel 
equivalence relation $E(\sigma)$ whose classes are the 
orbits of $\sigma$. By definition, $(x, y) \in E(\sigma)$ 
if there exist $m, n \in \N_0$ such that 
$\sigma^n(x)  = \sigma^m(y)$.   
Quasi-invariant measures for $\sigma$ coincide
with quasi-invariant measures for $E(\sigma)$. Then we can 
use \cite[Proposition 3.1]{DoughertyJacksonKechris1994} 
where the existence of $E(\sigma)$-quasi-invariant measures 
was proved.
\end{proof}

\begin{example}\label{ex basic ex}
Let $\sms = \prod_{i\in \N} (X_i, \B_i, \mu_i)$. Define the left
shift $\s$ on $X$: $\s(x_1, x_2, x_3, \cdots) = 
(x_2, x_3, \cdots)$. Then $\s$ is an endomorphism. If $|X_i| = N$
for all $i$, then $\s$ is $N$-to-one. Clearly, this construction 
can give other types of endomorphisms classified by the 
cardinality of $\s^{-1}(x)$. If all $(X_i, \B_i, \mu_i) = 
(Y, \mc C, \nu)$ are the same and $\mu$ is the product-measure 
$\otimes_i \nu$, then $\mu$ is $\s$-invariant. 
The measure $\mu$ can be quasi-invariant with respect to 
the left shift $\s$ if we use different measures $\mu_i$. 
More details are in \cite{HawkinsSilva1991}, 
\cite{DajaniHawkins1994}, and other papers of these authors.  
\end{example}

\subsection{Measurable partition}\label{subsect Meas part}

We refer to \cite{Rohlin1949} (or 
\cite{CornfeldFominSinai1982}) for the definition of a 
measurable partition.

Let $\xi = \{C_{\alpha} : \alpha \in I\}$ be a 
\textit{partition}  of a standard
probability  measure space $(X, \mathcal B, \mu)$ such that 
$C_\alpha \in \B$ (the index set $I$ can be either  
countable or uncountable; we focus on the case of an 
uncountable set). A Borel set $A$ of the form 
$\bigcup_{\alpha \in I'} C_\alpha$, $I' \subset I$, is 
called a  \emph{$\xi$-set}. 
 Let   $\B(\xi)$ be the sigma-algebra generated by all 
 $\xi$-sets. 
   
A partition $\xi$ is called \emph{measurable} if  
$\mathcal B(\xi)$ contains a countable subset $\{D_j\}$ of 
$\xi$-sets that separates any two elements $C, C'$ of $\xi$. 
This means that  there exists $D_i$ such that either 
$C \subset D_i$ and $C' \subset 
X \setminus D_i$ or $C' \subset D_i$ and $C \subset 
X \setminus D_i$. Let $\pi$ be the natural projection from 
$X$ to $X/\xi$, i.e., $\pi(x) = C_x$ where $C_x$ is the 
element of $\xi$ containing $x$. Using the projection  
$\pi : X \to X/\xi$, one can define a measure space 
$(X/\xi, \B/\xi, \mu_\xi)$ where $E \in  \B/\xi$ if and 
only if the $\xi$-set $\pi^{-1}(E) $ is in $\B$ and   
$\mu_\xi = \mu \circ \pi^{-1}$. 

The following result was proved in  \cite{Rohlin1949}.
\begin{lemma}
A partition $\xi$ of a standard measure space $\sms$ is measurable if and
 only if $(X/\xi,  \B/\xi, \mu_\xi)$ is a standard measure 
 space.    
\end{lemma}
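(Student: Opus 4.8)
The plan is to treat the two implications separately. The ``if'' direction is soft, and I would settle it first by pulling a countable separating family back along $\pi$; the ``only if'' direction is the substantive one, and I would handle it by encoding the partition as a single Borel map into Cantor space and transporting the measure and the Borel structure through it.

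For the ``if'' direction, suppose $(X/\xi,\B/\xi,\mu_\xi)$ is a standard measure space. A standard Borel space is countably separated, so I would choose a countable family $\{E_j\}\subseteq\B/\xi$ separating the points of $X/\xi$ and set $D_j:=\pi^{-1}(E_j)$. By the definition of $\B/\xi$ each $D_j$ lies in $\B$ and is a $\xi$-set, hence $D_j\in\B(\xi)$; and if $C\neq C'$ are elements of $\xi$ then $\pi(C)\neq\pi(C')$, so some $E_j$ contains exactly one of them, which forces $C\subseteq D_j$ and $C'\subseteq X\setminus D_j$ (or vice versa). Thus $\{D_j\}$ witnesses measurability of $\xi$.

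For the ``only if'' direction, assume $\xi$ is measurable with separating family $\{D_j\}_{j\in\N}\subseteq\B(\xi)$. Since each $D_j$ is a $\xi$-set, $\mathbbm{1}_{D_j}$ is constant on every element of $\xi$, so the Borel map $\Phi:X\to\{0,1\}^{\N}$, $\Phi(x)=(\mathbbm{1}_{D_j}(x))_{j}$, factors as $\Phi=\phi\circ\pi$ with $\phi:X/\xi\to\{0,1\}^{\N}$ measurable for $\B/\xi$, and the separation property says exactly that the fibres of $\Phi$ are the elements of $\xi$, so $\phi$ is injective. I would push the measure forward, $\nu:=\mu\circ\Phi^{-1}=\mu_\xi\circ\phi^{-1}$, and set $B:=\Phi(X)$. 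The set $B$ is analytic, hence universally measurable, and $\nu(B)=1$, so I would choose a Borel $B_0\subseteq B$ with $\nu(B_0)=1$ and pass to the conull Borel $\xi$-set $X':=\Phi^{-1}(B_0)$, which $\Phi$ maps onto $B_0$. The key step is to show that $\phi$ restricts to a Borel isomorphism of $(X'/\xi,\B/\xi)$ onto the standard Borel space $B_0$: for a $\xi$-set $A\subseteq X'$ with $A\in\B$ one has $\Phi^{-1}(\Phi(A))=A$ and $\Phi^{-1}(B_0\setminus\Phi(A))=X'\setminus A$ (because the fibres of $\Phi$ are $\xi$-elements), so $\Phi(A)$ and its complement $B_0\setminus\Phi(A)=\Phi(X'\setminus A)$ in $B_0$ are both analytic, whence $\Phi(A)$ is Borel by Souslin's theorem. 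Since $\phi$ carries $\mu_\xi$ to $\nu|_{B_0}$, this exhibits $(X/\xi,\B/\xi,\mu_\xi)$, modulo a null set, as the standard measure space $(B_0,\mathcal B\cap B_0,\nu|_{B_0})$.

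The hard part is precisely this measure-theoretic bookkeeping in the ``only if'' direction: a priori $B=\Phi(X)$ is only analytic, not Borel, so one cannot directly invoke the Lusin--Souslin theorem on the not-yet-known-to-be-standard quotient; the remedy I would use is the self-contained analytic/coanalytic argument above, run inside a conull Borel subset $B_0$. The remaining points, which I expect to be routine, are checking that $\B/\xi$ is generated (mod $0$) by the sets $\pi(D_j)$ so that the two Borel structures genuinely agree, reconciling the strict separation in the definition of ``measurable partition'' with the mod-$0$ conventions of the theory, and — if one insists on the non-atomic convention for ``standard measure space'' — observing that atoms of $\nu$ correspond exactly to $\xi$-elements of positive $\mu$-measure and can be split off.
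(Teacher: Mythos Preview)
The paper does not actually prove this lemma: it is stated with the attribution ``The following result was proved in \cite{Rohlin1949}'' and no argument is given. So there is no in-paper proof to compare your proposal against; you are supplying what the authors chose to cite rather than reproduce.

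That said, your argument is the classical one and is correct. The ``if'' direction via pulling back a countable separating family is exactly right. For the ``only if'' direction, your encoding $\Phi:X\to\{0,1\}^{\N}$ by the indicators of the separating $\xi$-sets, the passage to a conull Borel subset $B_0$ of the analytic image, and the analytic/co-analytic argument (using that fibres of $\Phi$ are precisely the elements of $\xi$, so $\Phi^{-1}(\Phi(A))=A$ for $\xi$-sets $A$) to invoke Souslin's theorem is the standard route, and your implementation is sound. The caveats you list at the end---that $\B/\xi$ is generated mod~$0$ by the $\pi(D_j)$, the reconciliation of strict separation with the ambient mod-$0$ conventions, and the handling of atoms---are exactly the points Rokhlin's Lebesgue-space framework is designed to absorb, and none of them hides a genuine obstruction.
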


It is said that a partition $\zeta$  \emph{refines} $\xi$
(in symbols, $\xi\prec \zeta $) if every element $C$ of 
$\xi$ is a $\zeta$-set. It turns out that every partition 
$\zeta$ has a \emph{measurable hull},
that is a measurable partition $\xi$ such that $\xi \prec 
\zeta$ and 
$\xi$ is a maximal measurable partition with this property.
If $\xi_\alpha$ is a family of measurable partitions, then
their product $\bigvee_\alpha \xi_\alpha$ is a measurable 
partition  $\xi$ which is uniquely determined by the 
conditions: (i) $\xi_\alpha
\prec \xi$ for all $\alpha$, and (ii) if $\eta$ is a 
measurable partition
such that $\xi_\alpha \prec \eta$, then $\xi \prec \eta$. 
Similarly, one defines the intersection 
$\bigwedge_\alpha\xi_\alpha$ of
measurable partitions. There is a one-to-one correspondence 
between the set of measurable partitions of a standard 
measure space $(X, \mathcal 
B, \mu)$ and  the set of complete sigma-subalgebras 
$\mathcal B'$ of $\mathcal   B$.

The role of measurable partitions becomes clear from Theorem
\ref{thm Rokhlin disintegration} given below. This famous 
result uses the notion of measure disintegration. 

\begin{definition}\label{def system cond measures}
For a  standard probability  measure space  
$(X, \mathcal B, \mu)$
and a measurable partition $\xi$ of $X$, it is said that a  
collection of measures $(\mu_C)_{C \in X/\xi}$ is a 
\emph{system of conditional measures}  with respect to 
$(X, \mathcal B, \mu)$ and $\xi$ if

(i) for each $C \in X/\xi$,  $\mu_C$ is a measure on the
sigma-algebra $\mathcal B_C := \mathcal B \cap C$ such that 
$(C, \mathcal B_C, \mu_C)$ is a standard probability measure 
space;

(ii) for any $B \in \mathcal B$, the function $C \mapsto 
\mu_C(B \cap C)$ is $\mu_\xi$-measurable;

(iii)  for any $B \in \mathcal B$,
\be\label{eq mu(B) integral}
\mu(B) = \int_{X/\xi} \mu_C(B \cap C)\; d\mu_\xi(C).
\ee
\end{definition}

Condition \eqref{eq mu(B) integral} can be rewritten in the 
equivalent form:
\be\label{eq_disint}
\int_X f(x)\; d\mu(x) = \int_{X/\xi} \left( \int_{C} f(y) \;
d\mu_C(y)\right) \; d\mu_\xi(C).
\ee

Measurable partitions are characterized by the following 
result.

\begin{theorem}[\cite{Rohlin1949}] 
\label{thm Rokhlin disintegration}
For any measurable partition $\xi$ of a standard probability 
measure space $(X, \mathcal B, \mu)$, there exists a unique 
system of conditional measures 
$(\mu_C)$. Conversely, if  $(\mu_C)_{C \in 
X/\xi}$ is a system of conditional measures with respect to 
$((X, \mathcal B, \mu), \xi)$, then $\xi$ is a measurable 
partition.
\end{theorem}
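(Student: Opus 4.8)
The plan is to prove the two implications separately; the forward direction (construction and uniqueness of the system $(\mu_C)$) contains essentially all of the difficulty, and everything is understood mod $0$.

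\emph{Existence.} Since $\xi$ is measurable, I would first fix a countable family $\{D_j\}$ of $\xi$-sets in $\B$ separating the elements of $\xi$, and form the Borel map $p\colon X\to\{0,1\}^{\N}\subset[0,1]$, $p(x)=(\mathbbm 1_{D_j}(x))_j$. By the separating property $p$ is constant on each $C\in\xi$ and its fibres agree mod $0$ with $\xi$, so after deleting a $\mu$-null set we may identify $X/\xi$ with a standard Borel subspace of $[0,1]$ and $\mu_\xi$ with $p_*\mu$; the problem becomes disintegration of $\mu$ along a Borel map between standard spaces. This I would do by the classical martingale argument: let $\xi_n$ be the finite partition generated by $D_1,\dots,D_n$, so $\xi_n\nearrow$ and $\mathcal B(\xi_n)\nearrow\mathcal B(\xi)$ mod $0$, and fix a countable algebra $\B_0$ generating $\B$. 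For $B\in\B_0$ the functions $x\mapsto\mu^{(n)}_{C_x}(B):=\mu(B\cap A_n(x))/\mu(A_n(x))$ (with $A_n(x)$ the $\xi_n$-atom through $x$) form a bounded $(\mathcal B(\xi_n))_n$-martingale, hence converge $\mu$-a.e.\ and in $L^1(\mu)$ to $\mathbb E[\mathbbm 1_B\mid\mathcal B(\xi)]$. Discarding the countable union of null sets on which convergence fails for some $B\in\B_0$, I would define, for $\mu_\xi$-a.e.\ $C$, a finitely additive normalized set function $\mu_C$ on $\B_0$ as this limit, evaluated at any $x\in C$.

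\emph{The main obstacle} is to upgrade this a.e.-defined finitely additive set function to a genuine countably additive Borel probability measure on $(C,\B_C)$, uniformly for $\mu_\xi$-a.e.\ $C$; this is exactly where standardness of $\sms$ is indispensable. Realizing $X$ (mod $0$) as a Borel subset of the compact space $[0,1]$, the approximating measures $\mu(\,\cdot\cap A_n(x))/\mu(A_n(x))$ are automatically tight, which yields, off a further null set of $C$'s, the unique countably additive extension (Carathéodory/Kolmogorov) of $\mu_C$; one also checks $\mu_C$ is concentrated on $C$, since $\mu_C(D_j)=\mathbbm 1_{D_j}(C)\in\{0,1\}$ for all $j$ forces $\mu_C(p^{-1}(p(C)))=1$. (Alternatively one could simply cite the existence of regular conditional probabilities on standard Borel spaces.) Granting that $\mu_C$ is a bona fide measure, conditions (i)--(iii) of Definition \ref{def system cond measures} are routine: (ii)--(iii) follow from the finite-level identities $\mu(B\cap\pi^{-1}(E))=\int_E\mu^{(n)}_C(B)\,d\mu_\xi(C)$ by monotone/dominated convergence and a monotone-class passage from $\B_0$ to $\B$. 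Uniqueness mod $0$ is then immediate: two systems satisfy $\int_E\mu_C(B)\,d\mu_\xi=\mu(B\cap\pi^{-1}(E))=\int_E\mu'_C(B)\,d\mu_\xi$ for all $B\in\B_0$ and all $\xi$-sets $\pi^{-1}(E)$, so $\mu_C(B)=\mu'_C(B)$ for $\mu_\xi$-a.e.\ $C$, and intersecting the countably many null sets gives $\mu_C=\mu'_C$ for $\mu_\xi$-a.e.\ $C$.

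\emph{Converse.} Given a system $(\mu_C)$, extend each $\mu_C$ to a Borel probability measure $\widetilde\mu_C$ on $X$ by $\widetilde\mu_C(B)=\mu_C(B\cap C)$ and set $\Phi(x)=\widetilde\mu_{C_x}$. Condition (ii) says precisely that $\Phi$ is a Borel map from $\sms$ into the standard Borel space $M_1(X)$ of Borel probability measures (the evaluations $\nu\mapsto\nu(B)$, $B\in\B_0$, generate its Borel structure), and $\Phi$ is constant on each element of $\xi$, so its fibres form a measurable partition $\eta$ coarser than $\xi$. To see $\eta=\xi$ mod $0$ it suffices to show every $\xi$-set $D\in\B$ is an $\eta$-set mod $0$; but combining (ii) and (iii) as in the uniqueness step gives $\mathbb E[\mathbbm 1_B\mid\mathcal B(\xi)](x)=\widetilde\mu_{C_x}(B)$ for $\mu$-a.e.\ $x$ and every fixed $B\in\B$, and for $B=D$ the left side equals $\mathbbm 1_D$, so $\mathbbm 1_D=\mathrm{ev}_D\circ\Phi$ mod $0$ is $\sigma(\Phi)$-measurable. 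Hence $\mathcal B(\xi)=\mathcal B(\eta)$ mod $0$, the partition $\xi$ coincides mod $0$ with the measurable partition $\eta$, and $\xi$ is measurable. The one recurring delicacy is the mod-$0$ bookkeeping — discarding the relevant $\mu_\xi$-null sets — which is harmless on a standard space.
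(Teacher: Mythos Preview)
The paper does not supply a proof of this theorem: it is quoted as a classical result and attributed to \cite{Rohlin1949}, with no argument given in the text. Your sketch is a faithful outline of the standard proof one finds in Rokhlin's original paper and in modern treatments (martingale convergence along a refining sequence of finite partitions, plus standardness to secure countable additivity of the limit set functions), so there is nothing in the paper to compare it against beyond the citation itself.

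One small comment on your converse direction: you invoke $\mathbb E[\mathbbm 1_B\mid\mathcal B(\xi)]$ before knowing that $\xi$ is measurable, which is fine since conditional expectation exists with respect to any sub-sigma-algebra, but it is worth making explicit that the identity $\mathbb E[\mathbbm 1_B\mid\mathcal B(\xi)](x)=\widetilde\mu_{C_x}(B)$ a.e.\ follows directly from (ii) and (iii) of Definition~\ref{def system cond measures} without any measurability hypothesis on $\xi$. With that noted, the argument that $\mathcal B(\xi)=\sigma(\Phi)$ mod $0$, and hence that $\xi$ inherits a countable separating family from the standard Borel structure on $M_1(X)$, is correct.
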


We apply Theorem \ref{thm Rokhlin disintegration}  
to the case of an endomorphism 
$\sigma \in End(X, \B, \mu)$. Let $\xi_\sigma$ be the 
measurable partition of $\sms$  into preimages 
$\sigma^{-1}(x) = C_x$ of points $x \in X$. Let $(\mu_C)$ be the 
system of conditional measures defined by $\xi_\sigma$. In the
case when $(X/\xi, \mu_\xi)$ is isomorphic to $(X, \mu)$ 
(for example, when $\s$ is the left shift or $\s : z \mapsto z^N$,
$z \in \mathbb T^1$), we see that relations 
\eqref{eq mu(B) integral} and \eqref{eq_disint} have the form 
\be\label{eq function disintegration for endo}
\int_X f(x)\; d\mu(x) = \int_X \left( \int_C f(y)\; 
d\mu_x(y)\right)\, d\mu(x). 
\ee
This decomposition is the key fact in our representation of the 
transfer operator generated by $\s$.

In most important cases, the disintegration of a measure is 
applied to probability (finite) measures. The problem of 
measure disintegration is discussed in many books and 
articles. We refer here to 
 \cite{Bogachev2007}, \cite{CornfeldFominSinai1982}, 
 \cite{Fabec1987, Fabec2000}, 
 \cite{Kechris1995}. The case of an infinite sigma-finite
measure was considered by several authors, see e.g.
   \cite{Simmons2012}. 

\begin{theorem}[\cite{Simmons2012}]\label{thm simmons}
Let $\sms$ and $(Y, \mathcal A, \nu)$ be standard measure 
spaces with sigma-finite measures, and suppose that $\pi : 
X \to Y$ is a measurable map. 
Let $\sms$ and $(Y, \mathcal A, \nu)$ be as above. 
Suppose that 
$\widehat \mu= \mu\circ \pi^{-1} \ll \nu$. Then there exists 
a unique system of conditional measures $(\nu_y)_{y \in Y}$ 
for $\mu$. For  $\nu$-a.e., $\nu_y$ is a sigma-finite 
measure.
\end{theorem}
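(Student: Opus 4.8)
The plan is to derive this sigma-finite disintegration from the classical finite case, Theorem~\ref{thm Rokhlin disintegration}, by an exhaustion-and-gluing argument: cut $X$ and $Y$ into countably many pieces of finite measure, disintegrate each piece, then reassemble. First I would use sigma-finiteness to fix increasing Borel sets $A_n \uparrow X$ with $\mu(A_n) < \infty$ and $B_m \uparrow Y$ with $\nu(B_m) < \infty$, set $B'_m := B_m \setminus B_{m-1}$ (with $A_0 = B_0 = \emptyset$), and introduce the countable Borel partition $X_{n,m} := (A_n \setminus A_{n-1}) \cap \pi^{-1}(B_m \setminus B_{m-1})$ of $X$. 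The point of this decomposition is that $\mu(X_{n,m}) < \infty$, while $\pi(X_{n,m}) \subseteq B'_m$ with $\nu(B'_m) < \infty$, so on each piece everything is finite and the image sits inside a fixed finite-measure subset of $Y$.

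Next, on a single piece I would apply Theorem~\ref{thm Rokhlin disintegration} to the finite measure $\mu_{n,m} := \mu|_{X_{n,m}}$ and the restricted map $\pi_{n,m} : X_{n,m} \to B'_m$; recall that a measurable map between standard Borel spaces induces a measurable partition into its fibres, obtained by pulling back a countable separating family of Borel sets on $B'_m$. This produces conditional probability measures $\mu^{n,m}_y$ concentrated on the fibres, with $\mu_{n,m}(B) = \int_{B'_m} \mu^{n,m}_y(B)\, d\wh\mu_{n,m}(y)$ where $\wh\mu_{n,m} := \mu_{n,m}\circ\pi^{-1}$. Since $\wh\mu_{n,m} \le \wh\mu \ll \nu$ and $\wh\mu_{n,m}$ is finite, its Radon--Nikodym derivative $\rho_{n,m} := d\wh\mu_{n,m}/d\nu$ exists and lies in $L^1(\nu)$; replacing $\mu^{n,m}_y$ by $\nu^{n,m}_y := \rho_{n,m}(y)\,\mu^{n,m}_y$ (and $0$ for $y \notin B'_m$) rewrites the identity with $\nu$ as base measure, $\mu_{n,m}(B) = \int_Y \nu^{n,m}_y(B)\, d\nu(y)$, each $\nu^{n,m}_y$ being a finite measure for $\nu$-a.e. $y$.

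It then remains to glue: I would set $\nu_y := \sum_{n,m}\nu^{n,m}_y$. For a fixed $y$ only the summands with $y \in B'_m$ survive, so $\nu_y$ is a countable sum of finite measures supported on the pairwise disjoint pieces $\pi^{-1}(y) \cap X_{n,m}$, hence a sigma-finite measure on the fibre $\pi^{-1}(y)$ for $\nu$-a.e. $y$. Summing the piecewise identities over $(n,m)$ and exchanging sum and integral by Tonelli (all terms nonnegative) yields $\mu(B) = \int_Y \nu_y(B)\, d\nu(y)$ for every $B \in \B$, and $y \mapsto \nu_y(B)$ is $\nu$-measurable as a countable sum of measurable functions. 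For uniqueness, if $(\nu_y)$ and $(\nu'_y)$ both satisfy the conclusion, then $\int_A \nu_y(B)\, d\nu = \mu(B \cap \pi^{-1}(A)) = \int_A \nu'_y(B)\, d\nu$ for all $A \in \mc A$ and $B \in \B$, so $\nu_y(B) = \nu'_y(B)$ off a $\nu$-null set depending on $B$; running this over a fixed countable algebra generating $\B$ (and containing a fibre-exhausting sequence) and invoking sigma-finiteness of the $\nu_y$ to extend the agreement from that algebra to all of $\B$, one gets $\nu_y = \nu'_y$ for $\nu$-a.e. $y$.

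I expect the main obstacle to be the passage from the pushforward $\wh\mu$ --- the measure that Theorem~\ref{thm Rokhlin disintegration} naturally integrates against --- to the prescribed base $\nu$: this is exactly where the hypothesis $\wh\mu \ll \nu$ is consumed, through the densities $\rho_{n,m}$, and it forces the conditional measures to be rescaled and possibly infinite, so one must check that the reassembled $\nu_y$ are still sigma-finite --- which is in turn precisely what the uniqueness step requires. The remaining bookkeeping (legitimacy of the Tonelli interchanges and of the countable-algebra uniqueness argument, and that the fibre partitions in play are genuinely measurable) is routine but is the place where the sigma-finiteness of both $\mu$ and $\nu$ is actually used.
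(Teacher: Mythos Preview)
The paper does not supply its own proof of this theorem: it is quoted from \cite{Simmons2012} and stated without argument, so there is nothing in the paper to compare your proposal against. Your exhaustion-and-gluing reduction to the finite Rokhlin disintegration (Theorem~\ref{thm Rokhlin disintegration}) is a standard and essentially correct way to obtain the sigma-finite statement; the use of $\wh\mu \ll \nu$ to pass from the pushforward base to $\nu$ via the densities $\rho_{n,m}$, the Tonelli reassembly, and the countable-generating-algebra uniqueness argument are all sound as outlined.
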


\subsection{Radon-Nikodym derivatives and Markovian 
functions} \label{subsect R-N}

Suppose $\sigma \in End\sms$. Recall that $\mu\cs^{-1} \sim \mu$ 
in this case. Then we can define  the 
\textit{Radon-Nikodym derivative} $\rho_\mu(x):= 
\frac{d\mu\circ\sigma^{-1}}{d\mu}(x)$, which is a Borel 
function  such that
\be\label{eq RN rho}
 \int_{X} f(\sigma x) \; d\mu = \int_X f(x) 
 \rho_\mu(x)\;d\mu, 
 \quad f \in L^1(\mu).
 \ee
Setting $\rho_n(x) =  \frac{d\mu\circ\sigma^{-n}}{d\mu}(x)$, 
we obtain the Radon-Nikodym \textit{cocycle} satisfying the 
equation  $\rho_{n+m}(x) = \rho_m(\sigma^n(x)) \rho_n(x)$. 

\begin{remark}\label{rem rho for mu and nu}
Suppose that $\sigma \in End\sms$ and $\nu$ is a measure
equivalent to $\mu$, i.e., there exists a Borel  function 
$\xi$ such that $d\nu(x) = h(x) d\mu(x)$. Then $\sigma$ 
is also non-singular with respect to $\nu$, and  
$\rho_\nu(x) =  h(\sigma x)  \rho_\mu(x) h^{-1}(x)$.
\end{remark} 

Every $\sigma \in End(X, \B)$ defines a linear operator 
$S_\sigma$ called a composition operator on the space 
of bounded Borel functions:
$$
S_\sigma (f) = f\circ\sigma, \qquad f \in \mathcal F(X, \B).
$$
This operator is also known by the name of a \textit{Koopman 
operator} 
when it is considered in a $ L^2$ space. 

The map $A \mapsto \mu(\sigma(A)), A \in \B,$ 
defines a measure on the subalgebra $\sigma^{-1}(\B)$. If 
$\mu$ is a \textit{forward quasi-invariant} measure, then 
there exists a unique $\sigma^{-1}(\B)$-measurable function 
$\omega_\mu(x) = \frac{d\mu\cs}{d\mu}(x)$ such that 
\be\label{eq Markovian}
\int_{X} f(\sigma x) \omega_\mu(x)\; d\mu = \int_X f(x) 
\;d\mu, \quad f \in  L^1(\mu).
\ee
It can be deduced from the uniqueness of the Radon-Nikodym 
derivative that 
 $$
 \omega_\mu(x) = \rho_\mu(\sigma(x))^{-1}  
 $$ 
 when these functions are 
 considered as functions measurable with respect to 
 $\sigma^{-1}(\B)$.

The following statement is well-known and we omit its 
proof.

 \begin{lemma}\label{lem_isom}
(1) The composition operator $S_\sigma : L^2(X, \B, \mu) 
\to L^2(X, \sB,\mu)$ is an isometry if and only if $\mu 
\circ \s^{-1} = \mu$. 

(2) The  operator $S_\sigma$ on $L^2(\mu)$ is bounded if 
and only if there exists a constant $k > 0$ such that 
$$
 \frac{\mu(\sigma^{-1}(A))}{\mu(A)} \leq k, \quad A \in \B.
$$

(3) If $\mu$ is a forward quasi-invariant measure, then 
$$
T_\s : f \longmapsto \sqrt{\omega_\mu} (f \cs)
$$
is an isometry from $L^2(X, \B, \mu)$ onto $L^2(X, 
\sB, \mu)$.
\end{lemma}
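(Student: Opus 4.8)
The plan is to derive all three statements from one change--of--variables identity: for $f\in L^2(\mu)$,
\be\label{eq:sk-cv}
\norm{S_\sigma f}^2_{L^2(\mu)} \;=\; \int_X |f\cs|^2\, d\mu \;=\; \int_X |f|^2\, d(\mu\csi1),
\ee
which is simply the defining property of the pushforward measure $\mu\csi1$ applied to the nonnegative Borel function $|f|^2$; note also that $f\cs$ is always $\sB$-measurable, so $S_\sigma$ maps into $L^2(X,\sB,\mu)$ whenever it is defined on $L^2$-classes (which already requires $\mu\csi1\ll\mu$).

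For part (1): if $\mu\csi1=\mu$, the right side of \eqref{eq:sk-cv} is $\norm f^2$, so $S_\sigma$ is isometric. Conversely, since $\mu$ is a probability measure, each $\mathbbm 1_A$ with $A\in\B$ lies in $L^2(\mu)$, and feeding it into \eqref{eq:sk-cv} forces $\mu\csi1(A)=\mu(A)$, i.e. $\mu\csi1=\mu$. For part (2): by \eqref{eq:sk-cv}, $\norm{S_\sigma}\le\sqrt k$ is equivalent to $\int_X|f|^2\,d(\mu\csi1)\le k\int_X|f|^2\,d\mu$ for all $f\in L^2(\mu)$; the same indicator argument turns this into $\mu\csi1(A)\le k\,\mu(A)$ for every $A\in\B$, which is precisely the statement that $\mu\csi1\ll\mu$ with $\rho_\mu=\frac{d\mu\csi1}{d\mu}\le k$ $\mu$-a.e., and this is what the uniform bound on the ratios $\mu(\sigma^{-1}(A))/\mu(A)$ says.

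For part (3), assume $\mu$ is forward quasi-invariant, so $\omega_\mu$ is the $\sB$-measurable density appearing in \eqref{eq Markovian}, strictly positive $\mu$-a.e. since it equals $(\rho_\mu^{-1})\cs$. Applying \eqref{eq Markovian} to $|f|^2$ in place of $f$ yields
\be\label{eq:sk-T}
\norm{T_\sigma f}^2 \;=\; \int_X \omega_\mu\,(|f|^2\cs)\, d\mu \;=\; \int_X |f|^2\, d\mu \;=\; \norm f^2,
\ee
and as $\sqrt{\omega_\mu}\,(f\cs)$ is $\sB$-measurable, $T_\sigma$ is an isometry of $L^2(X,\B,\mu)$ into $L^2(X,\sB,\mu)$. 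For surjectivity, given $g\in L^2(X,\sB,\mu)$, the Doob--Dynkin factorization of $\sB$-measurable functions through $\sigma$ lets me write $g=h\cs$ with $h$ Borel; setting $f:=h\,\rho_\mu^{1/2}$ (legitimate by positivity of $\rho_\mu$), one gets $\sqrt{\omega_\mu}\,(f\cs)=\big((\rho_\mu^{-1/2}\,\rho_\mu^{1/2})\cs\big)\,(h\cs)=h\cs=g$, and $f\in L^2(\mu)$ because \eqref{eq:sk-T} gives $\norm f=\norm g<\infty$. Hence $T_\sigma$ is onto.

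The routine ingredients are the pushforward identities \eqref{eq:sk-cv}, \eqref{eq:sk-T} and the passage from operator (in)equalities to measure (in)equalities via indicator functions. The one step that needs genuine care is the surjectivity in part (3): one must invoke the factorization theorem identifying $\sB$-measurable functions with those of the form $h\cs$, and simultaneously use that $\omega_\mu$ (equivalently $\rho_\mu$) is strictly positive $\mu$-a.e. so that dividing by $\sqrt{\omega_\mu}$ is meaningful at the level of measurable functions — both facts being part of the forward quasi-invariant setup recorded just before the lemma.
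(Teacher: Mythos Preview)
Your argument is correct. The paper itself omits the proof of this lemma, stating only that ``the following statement is well-known and we omit its proof,'' so there is nothing to compare against; your write-up supplies exactly the standard verification the authors had in mind.

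One small remark on part~(3): your surjectivity argument invokes $\rho_\mu$, which presupposes backward quasi-invariance in addition to the forward quasi-invariance hypothesized. This is harmless in the paper's context (membership in $End(X,\B,\mu)$ already means $\mu\csi1\sim\mu$), but you could also argue directly: forward quasi-invariance alone forces $\omega_\mu>0$ $\mu$-a.e.\ on $\sB$, so given $g\in L^2(X,\sB,\mu)$ the quotient $g/\sqrt{\omega_\mu}$ is a well-defined $\sB$-measurable function, hence of the form $f\cs$ by Doob--Dynkin, and then $T_\sigma f=g$ with $\|f\|=\|g\|$ by the isometry already established. This avoids any appeal to $\rho_\mu$.
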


\subsection{Properties of endomorphisms}
In this subsection, we collected the properties of endomorphisms 
of a measure space for the reader's convenience.  Here and 
below we implicitly use the 
$\mathrm{mod}\ 0$-convention which means that a property 
(formula, relation, etc) holds almost everywhere with 
respect to a fixed measure. 

\begin{definition}\label{def ergodic and exact}
Let $\sigma$ be a surjective endomorphism of $\sms$ with 
quasi-invariant measure $\mu$.  

(i) The endomorphism $\sigma$ is called 
\textit{conservative} if for any set $A$ of positive measure 
there exists $n >0$ 
such that $\mu(\sigma^n(A) \cap A) > 0$.

(ii) The endomorphism $\sigma$  is called  \textit{ergodic} 
 if whenever $A$ is $\sigma$-invariant, i.e., $
\sigma^{-1}(A) = A$,  then either $A$ or $X \setminus A$ is 
of measure zero.  Equivalently, $\sigma$ is ergodic if, for 
a bounded Borel function $f$,  the condition 
$f \cs = f$ implies that $f$ is a constant $\textrm{mod}\ 0$. 

(iii) Any endomorphism $\sigma \in End\sms$ generates the 
sequence of sub-algebras:
$$
\B \supset \sigma^{-1} (\B) \ \cdots \ \supset \sigma^{-i} 
(\B)\supset \ \cdots
$$
Then $\sigma \in End\sms$ is called \textit{exact} if 
 $$
 \B_\infty := \bigcap_{k \in \N} \sigma^{-k}(\B) = 
 \{\emptyset, X\} \  \mod 0.
$$

 \ignore{
(iv) The surjective endomorphism $\sigma$ of a probability 
 measure space $\sms$ is called  \textit{full} (or 
 l\textit{imsup full}) if  $\lim_{n\to \infty} 
 \mu(\sigma^n(B)) =1$ (or $\limsup_{n\to \infty} 
 \mu(\sigma^n(B))  = 1$) for every $B \in \B, \mu(B) > 0$.
 }
 
 (iv) A non-singular endomorphism $\sigma$ of $(X, \B, \mu)$ 
 is said to be $\mu$-\textit{recurrent} if for every non-
negative Borel  function $f$, the function 
$$
\Sigma(f) = \sum_{n\geq 0} f(\sigma^n(x)) \omega_n(x) 
$$
takes only the values 0 and  $\infty$ $\mu$-a.e. where 
 $\omega_n(x) = \dfrac{d\mu\cs^n}{d\mu}(x)$.
\end{definition}

In the next remark, we include several results illustrating the 
properties of endomorphisms given in Definition
\eqref{def ergodic and exact}. We use some results from 
\cite{Silva1988}, \cite{HawkinsSilva1991},  \cite{Hawkins1994}.

\begin{remark} 
(1) Every exact endomorphism is ergodic. There are 
examples of
ergodic endomorphisms which are not exact. As it is 
customary in ergodic theory, we can always assume, without 
loss of generality, that an endomorphism is ergodic. 

(2) There are examples of one-sided shifts ($n$-to-one 
endomorphisms) which are not exact.

(3)  We note that there are ergodic endomorphisms that are 
not conservative. 

(4) An endomorphism $\sigma$ is recurrent with respect to a  
finite measure $\mu$  if and only if $\sum_{n\geq 0} 
\omega_n(x) = \infty$.

(5) A $\mu$-recurrent endomorphism is conservative. 
\end{remark}

Since every surjective endomorphism defines an isometry
$S_\s$ (or $T_\s$), see Lemma \ref{lem_isom}, then
we can apply Wold's theorem to these objects.

\begin{theorem}[Wold's theorem]\label{thm Wold} 
Let $S$ be an isometric operator in a Hilbert space 
$\mc H$. Define 
$$
\mc H_{\infty} = \bigcap_n S^n \mc H, 
$$
and 
$$
\mc H_{shift} = N_{S^*} \oplus SN_{S^*} \oplus \cdots 
\oplus S^kN_{S^*} \oplus \cdots.
$$
Then the following statements hold.

(1) The space $\mc H$ is decomposed into the orthogonal 
direct sum
$$
\mc H = \mc H_\infty \oplus \mc H_{shift}.
$$

(2) 
$$
\mc H_{\infty} = \{ x \in \mc H : \| (S^*)^n x \| = \| x\|, 
\ \forall n \in \N\}.
$$

(3) The operator $S$ restricted on $\mc H_\infty$ is a 
unitary operator, and $S$ is a unilateral shift 
in the space $\mc H_{shift}$.
\end{theorem}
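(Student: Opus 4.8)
The plan is to reprove the classical Wold decomposition in the notation of the statement, the only structural input being that $S$ is an isometry, i.e.\ $S^*S = \mathbb I$, so that $P := SS^*$ is the orthogonal projection of $\mc H$ onto the closed subspace $S\mc H$ while $\mathbb I - P$ is the orthogonal projection onto $N_{S^*} = \ker S^* = (S\mc H)^\perp$. First I would check that the sum defining $\mc H_{shift}$ is a genuine orthogonal direct sum: for $u,v \in N_{S^*}$ and $0\le j<k$ one has $\langle S^j u, S^k v\rangle = \langle u, S^{k-j}v\rangle = 0$, since $S^{k-j}v\in S\mc H$ whereas $u\perp S\mc H$; and each $S^k$, being a product of isometries, is an isometry, so every $S^kN_{S^*}$ is closed and isometric to $N_{S^*}$.

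The heart of the argument is the telescoping identity
$$
\mc H = N_{S^*}\oplus SN_{S^*}\oplus\cdots\oplus S^{n-1}N_{S^*}\oplus S^n\mc H,\qquad n\ge 1,
$$
which I would prove by induction on $n$: the case $n=1$ is $\mc H = N_{S^*}\oplus S\mc H$, and applying the isometry $S$ to the level-$n$ identity and substituting the result into $\mc H = N_{S^*}\oplus S\mc H$ yields level $n+1$. Since $(S^n\mc H)_{n\ge 0}$ is a decreasing chain of closed subspaces whose intersection is $\mc H_\infty$ by definition, letting $n\to\infty$ gives $\mc H = \mc H_{shift}\oplus\mc H_\infty$, which is assertion (1).

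Next I would verify that $\mc H_\infty$ reduces $S$: on the one hand $S\mc H_\infty\subseteq\bigcap_{n\ge 1}S^n\mc H = \mc H_\infty$, and on the other, if $x\in\mc H_\infty$ then $x\in S\mc H$ so $x = Sz$ with $z = S^*x$, and writing $x = S^{n+1}y_n$ together with injectivity of $S$ forces $z = S^n y_n\in S^n\mc H$ for every $n$, hence $z\in\mc H_\infty$; thus $S\mc H_\infty = \mc H_\infty$ and $S^*\mc H_\infty = \mc H_\infty$. Consequently $S|_{\mc H_\infty}$ is a surjective isometry, hence unitary, while $S$ carries $S^kN_{S^*}$ onto $S^{k+1}N_{S^*}$, so $S|_{\mc H_{shift}}$ is the unilateral shift with wandering subspace $N_{S^*}$; this is assertion (3). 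For assertion (2), the inclusion ``$\subseteq$'' is immediate, since $x = S^n y$ gives $(S^*)^n x = y$ with $\|y\| = \|S^n y\| = \|x\|$; for ``$\supseteq$'' I would decompose $x = x_\infty + x_{sh}$ along $\mc H_\infty\oplus\mc H_{shift}$, note that both summands reduce $S^*$ and that $S^*$ is unitary on $\mc H_\infty$, so that $\|(S^*)^n x\|^2 = \|x_\infty\|^2 + \|(S^*)^n x_{sh}\|^2$, and observe that $(S^*)^n\to 0$ strongly on $\mc H_{shift}$ (it annihilates $S^kN_{S^*}$ as soon as $n>k$), so that $\|(S^*)^n x\| = \|x\|$ for all $n$ forces $x_{sh} = 0$.

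I expect the only genuinely delicate step to be the telescoping identity: it uses in an essential way that $S\mc H$ --- and hence every $S^n\mc H$ --- is closed, which is precisely where the isometry hypothesis enters, and it requires correctly identifying the orthogonal complement of the expanding partial sums with the intersection $\mc H_\infty$ appearing in the statement. Everything else is routine bookkeeping with the projection $SS^*$ and its complement $\mathbb I - SS^*$.
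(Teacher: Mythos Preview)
The paper states Wold's theorem as a classical result without proof (it is quoted in Section~2 as background and then applied in the subsequent corollary, with a reference to \cite{BezuglyiJorgensen2018(book)} for details), so there is no in-paper argument to compare against. Your proposal is a correct and complete proof of the theorem: the telescoping identity $\mc H = \bigoplus_{k=0}^{n-1} S^k N_{S^*}\oplus S^n\mc H$ is established correctly by induction, the passage to the limit is justified by taking orthogonal complements (so that $\mc H_\infty = \bigcap_n S^n\mc H = \mc H_{shift}^\perp$), the reducing-subspace argument for $\mc H_\infty$ is clean, and the characterization in (2) via the strong convergence $(S^*)^n\to 0$ on $\mc H_{shift}$ is the standard route. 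This is exactly the classical proof one finds in operator-theory texts, so nothing is missing or nonstandard.
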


For $\s \in End(X, \B, \mu)$, define
$$
\B_\infty = \bigcap_{n =0}^\infty \sigma^{-n}(\B),
$$
and let $\mc A_\sigma = \{A \in \B : \sigma^{-1}(A) = A\}$
be the subalgebra of $\sigma$-invariant subsets of $X$. 

Let $\zeta$ be a partition of $\sms$ into orbits of 
$\s$ (recall that $x \stackrel{\zeta} \sim y$ if there are
$n,m$ such that $\s^n (x) = \s^{m}(y)$). Let $\eta$ be
the partition of $\sms$ such that 
$x \stackrel{\eta} \sim y$ if there is 
$n$ such that $\s^n (x) = \s^{n}(y)$. By $\zeta'$ and
$\eta'$ we denote the measurable halls of $\zeta$ and 
$\eta$, respectively.

If $\epsilon$ denotes the partition of $X$ into points, 
then we have the sequence of decreasing measurable 
partitions $\{\sigma^{-i}(\epsilon)\}_{i=0}^\infty$:
$$
\epsilon \succeq \sigma^{-1}(\epsilon )\succeq 
\sigma^{-2}(\epsilon) \cdots.
$$
As shown in \cite{Rohlin1961}, the following results hold
 $$
\zeta' \preceq \eta', \qquad \ \ \ \eta' = \bigwedge_n 
\sigma^{-n}(\epsilon)
$$
and
$$
\mc A(\zeta') = \mc A_\sigma, \qquad \ \ \ \mc A(\eta') = 
\B_\infty.
$$

In particular,  $\sigma$ is ergodic if the partition 
$\zeta'$ is trivial, and $\sigma$ is exact if the 
partition $\eta'$ is trivial.

Since $\eta' $ is a measurable partition, we can define 
the quotient measure space $(Y, \nu) = (X/\eta', \B/\eta', 
\mu_{\eta'})$ where $\B/\eta' = \B_\infty$.

The following result is deduced from Wold's theorem,
see details in \cite{BezuglyiJorgensen2018(book)}. 

\begin{corollary}
(1) Let $\pi: X \to Y$ be the natural projection. Then 
there exists a measure-preserving  automorphism 
 $\wt \sigma : (Y, \nu) \to 
(Y, \nu)$ such that $\wt \sigma$  is an automorphic factor 
of $\sigma$, i.e.,
$$
\wt \sigma \circ \pi = \pi \circ \sigma.
$$

(2) Let $S_\s : f \to f\circ \sigma$ be the isometry on 
$\mc H = L^2(\mu)$. Then, in the Wold 
decomposition $\mc H = \mc H_\infty \oplus 
\mc H_\infty^\bot$ for $S_\s$, we have
$$
\mc H_\infty = L^2(Y, \nu),
$$
and the restriction of $S_\s$ to $\mc H_\infty$ 
corresponds to the unitary operator $U$ defined by 
$\wt\sigma$, 
$U(f) = f\circ \wt \sigma$.
\end{corollary}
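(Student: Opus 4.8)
\emph{Overall plan.} The plan is to prove (1) directly and then read off (2) from Wold's theorem (Theorem~\ref{thm Wold}). Throughout I would work $\mathrm{mod}\ 0$ and --- since the statement speaks of ``the isometry $S_\s$'', which by Lemma~\ref{lem_isom}(1) forces $\mu\circ\s^{-1}=\mu$ --- assume $\mu$ is $\s$-invariant; the quasi-invariant case is verbatim the same after replacing $S_\s$ by the isometry $T_\s$ of Lemma~\ref{lem_isom}(3) and carrying the Radon-Nikodym weights along. Everything rests on one identity: $\s^{-1}(\B_\infty)=\B_\infty$. Since preimages commute with countable intersections, $\s^{-1}(\B_\infty)=\bigcap_{n\ge 1}\s^{-n}(\B)$, and because the chain $\B\supset\s^{-1}(\B)\supset\s^{-2}(\B)\supset\cdots$ is decreasing, adjoining $\B$ to that intersection changes nothing, so $\s^{-1}(\B_\infty)=\bigcap_{n\ge 0}\s^{-n}(\B)=\B_\infty$. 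Equivalently (Doob-Dynkin, $\mathrm{mod}\ 0$): a function is $\B_\infty$-measurable if and only if it has the form $g\circ\s$ with $g$ itself $\B_\infty$-measurable.

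\emph{Construction of $\wt\s$ and proof of (1).} Recall $\B/\eta'=\B_\infty$ under $\pi^{-1}$, and that $(Y,\nu)=(X/\eta',\B/\eta',\mu_{\eta'})$ is a standard measure space because $\eta'$ is a measurable partition. Essentially by definition of the quotient, $\pi(x)=\pi(y)$ iff $g(x)=g(y)$ for every $\B_\infty$-measurable $g$. If $\pi(x)=\pi(y)$, then applying this to $g\circ\s$ --- which is $\B_\infty$-measurable by the previous paragraph --- gives $g(\s x)=g(\s y)$ for all such $g$, hence $\pi(\s x)=\pi(\s y)$. Thus $\wt\s(\pi(x)):=\pi(\s x)$ is a well-defined measurable map of $Y$ satisfying $\wt\s\circ\pi=\pi\circ\s$. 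It is injective: $\pi(\s x)=\pi(\s y)$ means $h(x)=h(y)$ for every $h$ of the form $g\circ\s$ with $g$ $\B_\infty$-measurable, and by the displayed equivalence these $h$ are exactly the $\B_\infty$-measurable functions, so $\pi(x)=\pi(y)$. It is surjective: given $C\in Y$, pick $z\in\pi^{-1}(C)$ and, $\s$ being onto, write $z=\s w$; then $\wt\s(\pi(w))=\pi(z)=C$. So $\wt\s$ is a measurable bijection $\mathrm{mod}\ 0$ of a standard measure space, hence has measurable inverse (Lusin-Souslin), and it preserves $\nu$ because $\nu(\wt\s^{-1}(B))=\mu(\pi^{-1}\wt\s^{-1}(B))=\mu(\s^{-1}\pi^{-1}(B))=\mu(\pi^{-1}(B))=\nu(B)$ for $B\in\B/\eta'$, using $\wt\s\circ\pi=\pi\circ\s$ and $\mu\circ\s^{-1}=\mu$. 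That is (1), and $\wt\s\circ\pi=\pi\circ\s$ is precisely the ``automorphic factor'' assertion.

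\emph{Identifying $\mc H_\infty$ and proof of (2).} For (2) I would first compute the ranges of the powers of $S_\s$: $\s$-invariance of $\mu$ makes $f\mapsto f\circ\s^n$ an isometry of $L^2(\mu)$ whose range is exactly $L^2(X,\s^{-n}(\B),\mu)$ (Doob-Dynkin again), so $S_\s^n\mc H=L^2(X,\s^{-n}(\B),\mu)$. Hence by Theorem~\ref{thm Wold}, $\mc H_\infty=\bigcap_n S_\s^n\mc H=\bigcap_n L^2(X,\s^{-n}(\B),\mu)=L^2(X,\B_\infty,\mu)$, the last equality being the elementary fact that for a decreasing sequence of complete sub-$\s$-algebras the intersection of the $L^2$-spaces equals the $L^2$-space of the intersection (any $f$ lying in every $L^2(X,\s^{-n}(\B),\mu)$ has all its level sets in $\bigcap_n\s^{-n}(\B)=\B_\infty$). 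Since $\pi$ is measure-preserving and $\B_\infty=\pi^{-1}(\B/\eta')$, the map $J:g\mapsto g\circ\pi$ is a unitary isomorphism $L^2(Y,\nu)\to\mc H_\infty$, and for $g\in L^2(Y,\nu)$ one has $S_\s(Jg)=g\circ\pi\circ\s=g\circ\wt\s\circ\pi=J(g\circ\wt\s)$; so $J^{-1}(S_\s|_{\mc H_\infty})J$ is the operator $U:g\mapsto g\circ\wt\s$, unitary precisely because $\wt\s$ is a measure-preserving automorphism, in agreement with Theorem~\ref{thm Wold}(3). This gives (2).

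\emph{Main obstacle.} The hard part will be the first two steps: getting $\s^{-1}(\B_\infty)=\B_\infty$ and then promoting that equality of $\s$-algebras to a genuine, invertible point transformation $\wt\s$ of the quotient. The delicate points there are the $\mathrm{mod}\ 0$ identifications and the equivalence ``$\B_\infty$-measurable'' $\Longleftrightarrow$ ``of the form $g\circ\s$ with $g$ $\B_\infty$-measurable''. The remainder is bookkeeping, modulo the standard (but worth isolating) lemma that the intersection of the $L^2$-spaces of a decreasing sequence of $\s$-algebras is the $L^2$-space of their intersection.
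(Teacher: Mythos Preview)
Your argument is correct and matches the approach the paper indicates: the paper does not spell out a proof here but simply states that the corollary ``is deduced from Wold's theorem'' and refers to \cite{BezuglyiJorgensen2018(book)} for details, which is exactly the route you take. Your key identities $\s^{-1}(\B_\infty)=\B_\infty$ and $\bigcap_n L^2(X,\s^{-n}(\B),\mu)=L^2(X,\B_\infty,\mu)$ (the latter cleanly justified by backward martingale convergence rather than the slightly informal level-set remark) are precisely what is needed to carry out that deduction.
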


It turns out that every non-singular endomorphism is 
a factor of an invertible dynamical system. It is said that 
an automorphism $T \in Aut (Y, \mathcal C,
\nu)$ is a \textit{natural extension} of an endomorphism 
$\sigma \in 
End\sms$ if there exists a map $\tau : (Y, \mathcal C,\nu) 
\to \sms$ such that 

(i) $\mathcal C = \bigvee_{n =0}^\infty T^n(\tau^{-1} \B), \ 
\ \mod 0$,

(ii) there exists a measure $\nu' \sim \nu$ such that 
$\omega_{\nu'} = \omega_\mu  \circ \tau$.

We recall an important fact saying that every non-singular 
endomorphism $\sigma$ of $\sms$ admits a a natural 
extension, see \cite{Rohlin_1949}, \cite{Silva1988},  
\cite{Beneteau1996}, and Example \ref{ex solenoid}.
  
\begin{example}\label{ex solenoid}
Let $\sigma$ be an onto endomorphism of a standard Borel 
space $(X, \B)$.
Define the set $\wh X$ as follows: $\wh X$ is a subset of 
$X \times X \times X  \times \cdots $ such that 
$$
\wh x = (x_i)_{i \geq 0} \in \wh X  \ \Longleftrightarrow \ 
\sigma(x_{i+1}) = x_i \ \ \forall i\geq 0.
$$
The set $\wh X$ is often called the \textit{solenoid} 
constructed by $(X, \sigma)$ and denoted 
$Sol_{(X, \sigma)}$. Note that $\wh X$ is a closed subset
in the product space $X \times X \times  \cdots $. 
Since $\wh X$ is Borel,
it  inherits the Borel structure $\wh \B$ from the product 
space.   

Define the map $\wh \sigma : \wh X \to \wh X$  by setting
$$
\wh \sigma (x_0, x_1, x_2, \dots ) = (\sigma(x_0), x_0, x_1, 
\dots).
$$
Then one can easily verify that $\wh\sigma$ is a one-to-one 
Borel map of $\wh X$ onto itself, and the shift 
$$
\tau :  (x_0, x_1, x_2, \dots ) \mapsto  (x_1, x_2, x_3, 
\dots ) 
$$
is inverse to $\wh \sigma$, $\tau = \wh \sigma^{-1}$.

Let $\pi_n : \wh X \to X$  be the projection from $\wh X$ 
onto the $n$-th coordinate: for $\wh x = (x_n)$, set 
$\pi_n(\wh x) = x_n$, $n \geq 0$. 
Then  $\pi_n$ can be extended to a map $f \mapsto 
f\circ\pi_n$ from $\F(X, \B)$ to $\F(\wh X,\wh\B)$. It 
follows from the above  definitions that $\pi_{n+1} 
\wh\sigma(\wh x) = \pi_n (\wh x)$, and 
$\pi_0$ is a factor  map from $(\wh X, \wh\sigma)$ to 
$(X, \sigma)$, i.e.,
 $$
 \pi_0 \wh\sigma = \sigma \pi_0. 
 $$

\begin{proposition}
Let $\mu$ be a Borel continuous measure on a standard Borel 
space  $(X, \B)$ which is quasi-invariant with respect to  
an endomorphism $\sigma$. 
Let the measure $\mathbb P $ on $(\wh X, \wh B)$ be defined 
by the relation $\mathbb P \circ\pi_0^{-1} = \mu$. Then 
the map 
 $$
 L^2(\mu) \ni f \stackrel{V_0}\longrightarrow f\circ\pi_0 
 \in L^2(\mathbb P)
 $$
is an isometry. Moreover, the operator $U :L^2(\mathbb P) 
\to L^2(\mathbb P)$ defined by the formula
$$
U = V_0 S_\sigma V_0^*
$$
is an isometry.
\end{proposition}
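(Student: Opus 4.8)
The plan is to prove the two assertions in turn: first that $V_0$ is isometric by a change of variables, and then to reduce the isometry of $U$ to the operator identity $U^*U = I$ on $L^2(\mathbb P)$. For the first assertion, I would compute, for $f \in L^2(\mu)$, the norm of $V_0 f = f\circ\pi_0$ by pushing the integral forward through $\pi_0$:
\[
\|V_0 f\|_{L^2(\mathbb P)}^2 = \int_{\wh X} |f\circ\pi_0|^2\, d\mathbb P = \int_X |f|^2\, d(\mathbb P\circ\pi_0^{-1}) = \int_X |f|^2\, d\mu = \|f\|_{L^2(\mu)}^2 .
\]
The middle step is the abstract change-of-variables formula for the image measure, and the last equality is the defining relation $\mathbb P\circ\pi_0^{-1} = \mu$. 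By polarization $\langle V_0 f, V_0 g\rangle = \langle f, g\rangle$, so $V_0^*V_0 = I_{L^2(\mu)}$; thus $V_0$ is an isometry onto the closed subspace $\mc H_0 := \{f\circ\pi_0 : f \in L^2(\mu)\}$, and $V_0V_0^*$ is the orthogonal projection of $L^2(\mathbb P)$ onto $\mc H_0$.

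\emph{Reduction of $U^*U$.} Using $U = V_0 S_\sigma V_0^*$ together with $V_0^*V_0 = I$, I would compute
\[
U^*U = V_0 S_\sigma^* (V_0^*V_0) S_\sigma V_0^* = V_0\, (S_\sigma^* S_\sigma)\, V_0^* .
\]
The factor $S_\sigma^*S_\sigma$ is read off from \eqref{eq RN rho}: for $f,g \in L^2(\mu)$ one has $\langle S_\sigma^*S_\sigma f, g\rangle = \int (f\cs)\,\overline{(g\cs)}\, d\mu = \int f\,\overline{g}\,\rho_\mu\, d\mu$, so $S_\sigma^*S_\sigma$ is multiplication by $\rho_\mu = \frac{d\mu\circ\sigma^{-1}}{d\mu}$, which reduces to $I_{L^2(\mu)}$ exactly in the measure-preserving regime where $S_\sigma$ is isometric (Lemma \ref{lem_isom}(1)). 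In that regime $U^*U = V_0V_0^*$, so the isometry of $U$ becomes equivalent to $V_0V_0^* = I_{L^2(\mathbb P)}$, i.e. to the surjectivity of $V_0$.

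\emph{Surjectivity of $V_0$.} It then remains to show $\mc H_0 = L^2(\mathbb P)$, equivalently that $\pi_0^{-1}(\B) = \wh\B \pmod{\mathbb P}$. Here I would use the way $\mathbb P$ is produced from $\mu$ through the single coordinate $\pi_0$: the aim is to show that $\mathbb P$-almost every square-integrable function on $\wh X$ is $\pi_0^{-1}(\B)$-measurable and hence factors as $g\circ\pi_0$ for some $g \in L^2(\mu)$, which gives $V_0V_0^* = I$ and therefore $U^*U = I$. As a structural confirmation, on $\mc H_0$ the factor relation $\sigma\circ\pi_0 = \pi_0\circ\wh\sigma$ yields $U(f\circ\pi_0) = (f\cs)\circ\pi_0 = (f\circ\pi_0)\circ\wh\sigma$, so $U$ is realized as composition by the Borel automorphism $\wh\sigma$ of $\wh X$, whose norm preservation on $L^2(\mathbb P)$ is precisely the invariance of $\mathbb P$ under $\wh\sigma$ corresponding to invariance of $\mu$ under $\sigma$; combined with the surjectivity of $V_0$ this delivers $\|U\xi\| = \|\xi\|$ for every $\xi \in L^2(\mathbb P)$.

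\emph{Main obstacle.} I expect the decisive point to be the surjectivity step, namely recovering the full solenoid $\sigma$-algebra $\wh\B$ mod $\mathbb P$ from the one coordinate $\pi_0$ (equivalently $V_0V_0^* = I$). This is where the concrete construction of $\mathbb P$ from $\mu$ must be exploited, and it is what, together with the measure-preservation input $S_\sigma^*S_\sigma = I$ identified in the reduction, upgrades the factored map $U = V_0 S_\sigma V_0^*$ to a genuine isometry of all of $L^2(\mathbb P)$.
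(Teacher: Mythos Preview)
The paper states this proposition inside Example~\ref{ex solenoid} and gives no proof, so there is nothing to compare your argument against on that side.

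Your proof of the first assertion (that $V_0$ is an isometry) is correct and is the only reasonable argument.

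For the second assertion, however, your own reduction exposes two genuine obstructions that you do not (and, I believe, cannot) overcome:
\begin{itemize}
\item You correctly compute $S_\sigma^*S_\sigma = M_{\rho_\mu}$ and observe that this equals $I$ only when $\mu$ is $\sigma$-\emph{invariant}. But the stated hypothesis is mere quasi-invariance, so the passage to $U^*U = V_0V_0^*$ already uses more than is assumed.
\item Even granting invariance, your argument then requires $V_0V_0^* = I$, i.e.\ that $\pi_0^{-1}(\B) = \wh\B \pmod{\mathbb P}$. For the standard solenoid measures this is \emph{false}: if $\sigma$ is genuinely non-invertible (say $z\mapsto z^2$ on $\mathbb T$ with Lebesgue $\mu$) and $\mathbb P$ is the natural-extension measure, then the coordinate $\pi_1$ is not $\pi_0^{-1}(\B)$-measurable, since each $x_0$ has two $\mathbb P$-equiprobable preimages. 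Hence $V_0$ is not onto, $V_0^*$ has nontrivial kernel, and $U = V_0 S_\sigma V_0^*$ annihilates $\mc H_0^\perp \neq \{0\}$, so $U$ cannot be an isometry on all of $L^2(\mathbb P)$.
\end{itemize}
In short, your reduction is algebraically sound, but the ``surjectivity of $V_0$'' step you flag as the main obstacle is not a difficulty to be overcome---it is a genuine failure for any $\mathbb P$ that sees more than the zeroth coordinate. The relation $\mathbb P\circ\pi_0^{-1}=\mu$ does not by itself force $\wh\B = \pi_0^{-1}(\B)$ mod $\mathbb P$; indeed, it does not even determine $\mathbb P$ uniquely. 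So either the proposition is intended with additional unstated hypotheses (e.g.\ $\mu$ invariant and $\mathbb P$ supported so that $\pi_0$ is mod-$0$ invertible), or ``isometry'' is meant only on the range $\mc H_0$ of $V_0$; as written, your approach cannot close the gap, and no approach can under the bare hypotheses.
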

 
\end{example}

\subsection{Markovian functions} 
In this subsection, we consider a class of functions defined
by an endomorphism $\s \in End\sms$. The following 
definition is motivated by relation \eqref{eq Markovian}.
 
\begin{definition}\label{def_Markovian} Let $\sigma$ be an 
onto endomorphism of $(X, \B, \mu)$. 
A function $\varphi\in \FXB$ satisfying 
\be\label{eq-varphi}
\int_X (f\cs) \varphi \; d\mu = \int_X f \; d\mu 
\ee
for all $f $ in $L^1(\mu)$ is called a \textit{Markovian}
function. The set of all Markovian functions is denoted by 
$M(\sigma, \mu)$. We denote  $M_2(\sigma, \mu) = M(\sigma, 
\mu) \cap L^2(\mu)$. 
\end{definition}

The Markovian functions were considered in a series of 
papers \cite{Hawkins1994}, \cite{HawkinsSilva1991}, 
\cite{DajaniHawkins1994}, and others. 

\begin{remark}\label{rem Mark}

(1) The set $M(\sigma, \mu)$ is convex. 

(2) Suppose that $\mu$ is a forward quasi-invariant 
measure for 
$\sigma \in End\sms$. Then the set of Markovian functions 
$M(\sigma, \mu)$ is not empty because 
$\omega_\mu \in M(\sigma, \mu)$ due to relation 
\eqref{eq Markovian}. 
 
(3)  For two equivalent measures $\mu$ and $\nu$,  we 
discuss 
relation between the functions $\omega_\mu$ and $\omega_\nu$
in Theorem \ref{thm om_mu and om_nu}. As above, the function 
$\omega_\mu $ generates a cocycle by setting 
$\omega_n(x) = \frac{d\mu\cs^n}{d\mu}(x)$ (where $\omega_1 = 
\omega_\mu$).

(4) Let $g$ be a function from $L^1(\mu)$. It was shown  
in \cite{Hawkins1994} that, for the measure $d\nu = gd\mu$, 
the  following holds:
$$
\int_{X} f(\sigma x) \dfrac{g\omega_\nu }{g\cs}(x)\; d\mu = 
\int_X f(x) \;d\mu,  \quad f \in  L^1(\mu).
$$
This means that the  function $\dfrac{g\omega_\nu }{g\cs}$ 
is Markovian with respect to  $(\sigma, \mu)$. 
\end{remark} 

Based on the facts from Remark \ref{rem Mark}, we prove the 
following result.

\begin{proposition}\label{prop Mark for mu nu}
Let  $\sigma \in End(X, \B, \mu)$. Suppose a measure 
$\nu$ is equivalent to $\mu$ and $g = \frac{d\nu}{d\mu}$. Then
$$
M(\sigma, \nu) = g^{-1} M(\sigma, \mu) (g\circ\sigma).
$$
\end{proposition}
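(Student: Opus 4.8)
The plan is to transport the defining relation \eqref{eq-varphi} for $\nu$ back to $\mu$ by a change of the test function. Write $g = \frac{d\nu}{d\mu}$; since $\nu \sim \mu$ we have $g > 0$ $\mu$-a.e., so $g^{-1}$ is an a.e.\ finite Borel function, and since $\sigma$ is non-singular for $\mu$ the composition $g\cs$ is defined $\mu$-a.e.\ as well (a $\mu$-null set pulls back under $\sigma$ to a $\mu$-null set). All identities below are understood $\mathrm{mod}\ 0$.

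First I would fix $\psi \in \FXB$ and unwind $\psi \in M(\sigma,\nu)$. By Definition \ref{def_Markovian} this means $\int_X (f\cs)\psi\; d\nu = \int_X f\; d\nu$ for every $f \in L^1(\nu)$, and substituting $d\nu = g\, d\mu$ it is equivalent to
\be
\int_X (f\cs)\,\psi\, g\; d\mu = \int_X f\,g\; d\mu, \qquad f \in L^1(\nu).
\ee
Now I would reparametrise by $h := fg$. Because $g > 0$ a.e., the map $f \mapsto fg$ is a bijection of $L^1(\nu)$ onto $L^1(\mu)$, with inverse $h \mapsto h g^{-1}$; under it $\int_X fg\, d\mu = \int_X h\, d\mu$, and $f\cs = (h\cs)(g\cs)^{-1}$, hence $(f\cs)\,\psi\, g = (h\cs)\,\psi g (g\cs)^{-1}$. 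Letting $h$ range over all of $L^1(\mu)$, the displayed condition becomes
\be
\int_X (h\cs)\,\frac{\psi g}{g\cs}\; d\mu = \int_X h\; d\mu, \qquad h \in L^1(\mu),
\ee
which by Definition \ref{def_Markovian} says exactly that $\varphi := \dfrac{\psi g}{g\cs} \in M(\sigma,\mu)$.

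Solving $\varphi = \psi g (g\cs)^{-1}$ for $\psi$ gives $\psi = g^{-1}\,\varphi\,(g\cs)$. Thus the assignment $\Theta : \varphi \mapsto g^{-1}\varphi(g\cs)$ is a bijection of the space of a.e.-finite Borel functions onto itself (inverse $\psi \mapsto g\,(g\cs)^{-1}\psi$), and the equivalences above show that $\Theta$ maps $M(\sigma,\mu)$ onto $M(\sigma,\nu)$; that is, $M(\sigma,\nu) = g^{-1} M(\sigma,\mu)\,(g\cs)$. The computation is essentially bookkeeping, and the only delicate point — that $f \mapsto fg$ is a genuine isomorphism between $L^1(\nu)$ and $L^1(\mu)$ intertwining the two integrals, so that no test functions are lost in the substitution — is precisely where the hypothesis $\nu \sim \mu$ is used; I do not expect a real obstacle. (The special case $\psi = \omega_\nu$ recovers Remark \ref{rem Mark}(4), and if nonnegativity is built into the notion of a Markovian function it is preserved in both directions since $g, g\cs > 0$.)
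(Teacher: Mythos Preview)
Your proof is correct and follows essentially the same approach as the paper: both arguments substitute $d\nu = g\,d\mu$ into the Markovian identity and change the test function via $h = fg$ to pass between $M(\sigma,\nu)$ and $M(\sigma,\mu)$. The paper presents the two inclusions separately while you run the computation as a single chain of equivalences, and you are slightly more explicit about the bijection $L^1(\nu)\to L^1(\mu)$, $f\mapsto fg$, but the substance is the same.
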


\begin{proof}
For a function $\varphi \in M(\sigma, \mu)$, show that 
$h = g^{-1} \varphi (g\cs) \in M(\sigma, \nu)$. For any 
$f \in L^1(\nu)$, we have
$$
\ba
\int_X (f\cs)  g^{-1} \varphi (g\cs) \;d\nu = &
\int_X (fg)\cs \va \; d\mu\\
= & \int_X fg \; d\mu\\
=& \int_X f\; d\nu.
\ea
$$
This shows that the function $h$ is in $M(\sigma, \nu)$. 

Conversely, let $\va$ be a Markovian function from 
$M(\sigma, \nu)$. Since $d\nu = gd\mu$, 
$$
\ba \int_X fg\; d\mu = & \int_X (f\cs) \va g \; d\mu\\
=& \int_X (f\cs)(g\cs)  [(g\cs)^{-1}\va g] \; d\mu. 
\ea
$$
The latter means that $(g\cs)^{-1}\va g \in M(\sigma, \mu)$.
\end{proof}

\begin{lemma}
    Let $\va_1, ..., \va_k$ be Markovian functions from 
$M(\s, \mu)$. Then the function $\psi= (\va_k\cs^{k-1}) \cdots 
    (\va_2\cs) \va_1$ belongs to $M(\s^k, \mu)$.
\end{lemma}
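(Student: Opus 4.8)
The plan is to prove the identity $\int_X (f\cs^k)\,\psi\;d\mu = \int_X f\;d\mu$ for all $f\in L^1(\mu)$ by peeling off the Markovian functions one composition-level at a time, using the defining relation \eqref{eq-varphi} repeatedly. The key observation is that $\psi$ is a product in which the factor $\va_j$ is precomposed with $\s^{j-1}$, so that when I write $f\cs^k = ((f\cs^{k-1})\cs)$ and integrate, the outermost level of the composition is $\s$ applied to the function $f\cs^{k-1}$, and the ``innermost'' Markovian factor available at that level is $\va_1$ (composed with $\s^0 = \mathrm{id}$).

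First I would set $g_1 = f\cs^{k-1}$, so that $f\cs^k = g_1\cs$ and $\psi = [(\va_k\cs^{k-1})\cdots(\va_2\cs)]\va_1 = (h_1\cs)\,\va_1$ where $h_1 = (\va_k\cs^{k-2})\cdots(\va_2\cs^0) = (\va_k\cs^{k-2})\cdots\va_2 \in M(\s^{k-1},\mu)$ by the inductive hypothesis. Then
\[
\int_X (f\cs^k)\,\psi\;d\mu = \int_X (g_1\cs)(h_1\cs)\,\va_1\;d\mu = \int_X ((g_1 h_1)\cs)\,\va_1\;d\mu = \int_X g_1 h_1\;d\mu,
\]
where the last step is exactly \eqref{eq-varphi} applied to $\va_1\in M(\s,\mu)$ with test function $g_1 h_1$. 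Now $g_1 h_1 = (f\cs^{k-1})h_1$, and since $h_1\in M(\s^{k-1},\mu)$ the definition of a Markovian function for $\s^{k-1}$ gives $\int_X (f\cs^{k-1})h_1\;d\mu = \int_X f\;d\mu$. This closes the induction, with the base case $k=1$ being the hypothesis $\va_1\in M(\s,\mu)$ by definition.

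I should be slightly careful about one point: to apply \eqref{eq-varphi} for $\s$ with test function $g_1 h_1$, and to apply the Markovian property of $h_1$ for $\s^{k-1}$ with test function $f$, I need these products to lie in $L^1(\mu)$ (or at least in the appropriate integrability class). Strictly speaking the cleanest route is to reduce to bounded $f$, or to nonnegative $f$ and invoke monotone convergence, and then extend by density/linearity; alternatively one notes that the Markovian relation extends from $L^1$ to all nonnegative measurable functions (both sides being $\mu$-integrals of nonnegative functions), which is enough to run the peeling argument without integrability worries and then specialize. The structural content --- that the staggered composition $(\va_k\cs^{k-1})\cdots(\va_2\cs)\va_1$ is precisely what is needed so that each integration by \eqref{eq-varphi} produces a well-formed test function for the next, lower power of $\s$ --- is the heart of the argument; the measure-theoretic bookkeeping about integrability is the only mild obstacle, and it is handled by the standard reduction to nonnegative functions.

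An alternative, perhaps more transparent, formulation is to prove the two-factor case first: if $\va\in M(\s,\mu)$ and $\chi\in M(\s^{n},\mu)$, then $(\chi\cs)\va\in M(\s^{n+1},\mu)$, since
\[
\int_X (f\cs^{n+1})(\chi\cs)\va\;d\mu = \int_X ((f\cs^{n})\chi)\cs\,\cdot\,\va\;d\mu = \int_X (f\cs^{n})\chi\;d\mu = \int_X f\;d\mu.
\]
The general statement then follows by induction on $k$, writing $\psi = ((\psi')\cs)\va_1$ with $\psi' = (\va_k\cs^{k-2})\cdots\va_2 \in M(\s^{k-1},\mu)$ and applying the two-factor case with $n = k-1$. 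I would present it in this latter form, as it isolates the one nontrivial step cleanly.
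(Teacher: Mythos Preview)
Your proof is correct and follows essentially the same route as the paper: both peel off the factor $\va_1$ via \eqref{eq-varphi} to reduce $\int_X (f\cs^k)\psi\,d\mu$ to $\int_X (f\cs^{k-1})(\va_k\cs^{k-2})\cdots\va_2\,d\mu$, and then iterate. The paper simply writes out the repeated computation with dots rather than casting it as an explicit induction, and it does not pause over the integrability point you raise.
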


\begin{proof} We compute 
$$
\ba 
\int_X (f\cs^k) \psi \; d\mu = & 
\int_X [(f\cs^{k-1})\cs  (\va_k\cs^{k-2})\cs\cdots (\va_2\cs)] 
\va_1\; d\mu  \\
 =& \int_X(f\cs^{k-1}) (\va_k\cs^{k-2}) \cdots (\va_3 \cs)\va_2\; 
 d\mu \\
 & \ \ \cdots \cdots  \\
=& \int_X f\; d\mu.
\ea
$$   
\end{proof}

\section{Operators generated by endomorphisms}
\label{sect End Operators} 

This section considers several linear operators  
naturally defined by surjective endomorphisms of $\sms$. 
These operators act in $L^2(\mu)$ and other functional 
spaces.

\subsection{Transfer operators and endomorphisms}
\label{subs TO endom}

We define a transfer operator in general settings using 
only the Borel structure of the space $(X, \B)$. Transfer 
operators are extensively studied for various dynamical 
systems applying the properties of phase spaces. 
 
\begin{definition} Let $\FXB$ be the set of all bounded 
Borel functions \footnote{In this section, we will consider 
real-valued functions for definiteness; the case of
complex-valued functions can be done similarly.}
and let $R: \FXB \to \FXB$ be a linear operator. 
Then $R$ is called a \textit{transfer operator} if it 
satisfies  the following properties:

(i) $f \geq 0 \ \Longrightarrow \ R(f) \geq 0$ (i.e., $R$ is 
a positive operator);

(ii)  for any Borel functions  $f, g \in \mathcal F(X, \B)$, 
the \textit{pull-out property} holds
\be\label{eq char property of r via sigma 1}
R((f\circ \sigma) g) = f R(g).
\ee
To emphasize that a transfer operator $R$ is defined by an
onto endomorphism $\sigma$, we will also write $R$ as 
$(R, \sigma)$.  
\end{definition}

Let $\mathbbm 1$ be a function on $(X, \B)$ that takes the 
only value 1. If $R(\mathbbm 1)(x) > 0$ for all $x\in X$, 
then we say that $R$ is a \emph{strict transfer operator}. 
If $R(\mathbbm 1) = \mathbbm 1$, then the transfer operator 
$R$ is  called \textit{normalized}.  

Every transfer operator $R$ defines an action on the 
set of probability measures $M_1(X)$: given 
$\mu \in M_1(X)$, set
$$
(\mu R)(f) = \int_X R(f)\; d\mu, \quad f \in \FXB.  
$$ 
If $\mu = \mu R$, then $\mu$ is called $R$-invariant.
A measure $\mu \in M_1(X)$ is called 
\textit{strongly invariant}
with respect to a transfer operator  $(R, \sigma)$ if $\mu$  
$\sigma$-invariant and $\mu R = \mu$. 

Restrictions of transfer operators on Banach or Hilbert 
spaces give more possibilities to study their properties. 

\begin{lemma}\label{lem:strong inv} Let $\sms$ be a 
probability standard measure space. 
Suppose that $(R, \s)$ is a normalized transfer operator 
acting in 
$L^2\sms$ where $\mu \in M_1(X)$. Then $\mu$ is strongly 
invariant with respect to $(R, \s)$ if and only if
$R^*(\mathbbm 1) = \mathbbm 1$.
\end{lemma}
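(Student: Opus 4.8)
The plan is to characterize strong invariance by testing the two defining conditions ($\sigma$-invariance of $\mu$ and $\mu R = \mu$) against the single condition $R^*(\mathbbm 1) = \mathbbm 1$. The crucial observation is that, because $R$ is normalized ($R(\mathbbm 1) = \mathbbm 1$), the condition $\mu R = \mu$ is in fact \emph{automatic}: for every $f \in \FXB$ the pull-out property \eqref{eq char property of r via sigma 1} with $g = \mathbbm 1$ gives $R(f \circ \sigma) = f R(\mathbbm 1) = f$, hence
$$
(\mu R)(f\circ\sigma) = \int_X R(f\circ\sigma)\, d\mu = \int_X f\, d\mu.
$$
On the other hand $\sigma$-invariance of $\mu$ says exactly $\int_X f\circ\sigma\, d\mu = \int_X f\, d\mu$. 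So once $\mu$ is $\sigma$-invariant, the requirement $\mu R = \mu$ reduces to asking that the values of $\mu R$ on functions of the form $f\circ\sigma$ determine $\mu R$ — and here one must be a little careful, since $\{f\circ\sigma : f \in \FXB\}$ is generally a proper subspace of $\FXB$ (it is the space of $\sB$-measurable functions). I will revisit this point below; it is the subtle step.

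First I would prove the easy direction. Assume $\mu$ is strongly invariant, so $\mu$ is $\sigma$-invariant and $\mu R = \mu$. For any $g \in \FXB$,
$$
\int_X R^*(\mathbbm 1)\, g\, d\mu \;=\; \int_X \mathbbm 1 \cdot R(g)\, d\mu \;=\; (\mu R)(g) \;=\; \int_X g\, d\mu,
$$
where the first equality is the definition of the Hilbert-space adjoint $R^*$ in $L^2(\mu)$ (valid since $\mathbbm 1 \in L^2\sms$ as $\mu$ is a probability measure), and the last is $\mu R = \mu$. Since $g$ is arbitrary, $R^*(\mathbbm 1) = \mathbbm 1$ in $L^2(\mu)$.

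For the converse, assume $R^*(\mathbbm 1) = \mathbbm 1$. Running the same computation backwards gives $\int_X R(g)\, d\mu = \int_X g\, d\mu$ for all $g$, i.e. $\mu R = \mu$; so it remains only to deduce that $\mu$ is $\sigma$-invariant. Here is where normalization re-enters: taking $g = f\circ\sigma$ and using $R(f\circ\sigma) = f$ (shown above) yields
$$
\int_X f\, d\mu \;=\; \int_X R(f\circ\sigma)\, d\mu \;=\; \int_X f\circ\sigma\, d\mu
$$
for every $f \in \FXB$, which is precisely $\mu\circ\sigma^{-1} = \mu$. Thus both halves of strong invariance hold. The main obstacle, and the only place requiring genuine care, is the use of $R(\mathbbm 1) = \mathbbm 1$ together with the pull-out property to reduce $\mu R$-invariance on $\sB$-measurable functions to $\sigma$-invariance of $\mu$ — everything else is a direct manipulation of the adjoint relation and the definitions. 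I would also note in passing that the normalization hypothesis is essential: without $R(\mathbbm 1) = \mathbbm 1$ the identity $R(f\circ\sigma) = f$ fails and the equivalence breaks down.
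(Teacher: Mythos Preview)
Your proof is correct and follows essentially the same route as the paper: both arguments use the adjoint identity $\langle R^*(\mathbbm 1), g\rangle_\mu = \langle \mathbbm 1, R(g)\rangle_\mu$ to identify $R^*(\mathbbm 1) = \mathbbm 1$ with $\mu R = \mu$, and then invoke the pull-out property together with normalization ($R(f\circ\sigma) = f R(\mathbbm 1) = f$) to extract $\sigma$-invariance of $\mu$. One small remark: the concern you raise in your opening paragraph about whether values of $\mu R$ on $\sB$-measurable functions determine $\mu R$ is a red herring --- your own argument (and the paper's) never needs this, since you first establish $\mu R = \mu$ on \emph{all} of $\FXB$ and only afterwards specialize to $g = f\circ\sigma$.
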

\begin{proof} Here and below we denote by $\langle \cdot,
\cdot\rangle_{\mu}$ the inner product in $L^2(\mu)$. 
Since $R$ is normalized, we can write
$$
\langle f, \mathbbm 1\rangle_{\mu\csi1} =  \langle f\cs, 
R^*(\mathbbm 1) \rangle_{\mu} = \langle R(f\cs), 
\mathbbm 1 \rangle_{\mu} = \langle f R(\mathbbm 1), 
\mathbbm 1
\rangle_{\mu} = \langle f, \mathbbm 1\rangle_\mu, 
$$
that is $\mu $ is $\s$-invariant if 
$R^*(\mathbbm 1) = \mathbbm 1$.

Similarly, we see that the condition $R^*(\mathbbm 1) =1$ is
equivalent to  $\mu = \mu R$:
$$
\langle f, \mathbbm 1\rangle_{\mu R} = \langle R(f), 
\mathbbm 
1\rangle_{\mu} = \langle f, R^*(\mathbbm 1)\rangle_{\mu} =
\langle f, \mathbbm 1\rangle_{\mu}. 
$$
\end{proof}

\begin{example}\label{ex_RokhlinTO} Let $\s \in End\sms$, 
$\mu(X) =1$, and let the partition $\xi_\sigma$ of $X$ be 
defined by preimages $\{\s^{-1}(x) : x \in X\}$ of $\s$.
We give an example of a transfer operator $R_\s$ 
which is determined by the system of conditional measures 
$\{\mu_C\}$  
(see  Subsection \ref{subsect Meas part}) over the 
partition $\xi_\sigma$ of $X$.
Define a linear operator $R_\s$ acting on Borel bounded 
functions over the standard 
probability measure space  $(X, \B, \mu)$ by setting 
\be\label{eq TO via cond syst meas Intro}
R_\s(f)(x) := \int_{C_x} f(y)\; d\mu_{C_x}(y)
\ee
where  $C_x =  \sigma^{-1}(x)$. 
\end{example}

\begin{lemma}\label{lem R via cond syst meas}
The operator $R_\s: \FXB \to \FXB$ defined by 
(\ref{eq TO via cond syst meas Intro}) is a transfer 
operator. 
\end{lemma}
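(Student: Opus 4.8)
The plan is to verify the two defining properties of a transfer operator directly from the disintegration formula \eqref{eq_disint}. Recall that $R_\s(f)(x) = \int_{C_x} f(y)\, d\mu_{C_x}(y)$ where $C_x = \sigma^{-1}(x)$, and that $(\mu_C)$ is the system of conditional measures associated with the measurable partition $\xi_\sigma$ into preimages of $\sigma$. Two preliminary points must be settled before the main verification: first, that $R_\s$ actually maps $\FXB$ into $\FXB$ (boundedness is clear since $\mu_{C_x}$ are probability measures, so $\|R_\s(f)\|_\infty \le \|f\|_\infty$; measurability of $x \mapsto R_\s(f)(x)$ follows from condition (ii) in Definition \ref{def system cond measures}, identifying the quotient $X/\xi_\sigma$ with $X$ as in \eqref{eq function disintegration for endo}); and second, that $R_\s$ is well-defined as a function of $x \in X$ rather than merely of the partition element --- this uses that $\sigma^{-1}(x)$ depends only on $x$, together with the identification of $X/\xi_\sigma$ with $X$.

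First I would check positivity (property (i)): if $f \geq 0$, then since each $\mu_{C_x}$ is a (positive) measure, the integral $\int_{C_x} f\, d\mu_{C_x} \geq 0$, so $R_\s(f) \geq 0$. This is immediate.

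The substantive step is the pull-out property (ii): for $f, g \in \FXB$ we must show $R_\s((f\circ\sigma)\, g) = f\, R_\s(g)$. The key observation is that $f\circ\sigma$ is constant on each fiber $C_x = \sigma^{-1}(x)$, with value $f(x)$. Hence, evaluating at a point $x$,
$$
R_\s((f\circ\sigma)\, g)(x) = \int_{C_x} f(\sigma(y))\, g(y)\, d\mu_{C_x}(y) = \int_{C_x} f(x)\, g(y)\, d\mu_{C_x}(y) = f(x) \int_{C_x} g(y)\, d\mu_{C_x}(y) = f(x)\, R_\s(g)(x),
$$
since $\sigma(y) = x$ for every $y \in C_x = \sigma^{-1}(x)$, so $f(x)$ pulls out of the integral as a constant. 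This establishes \eqref{eq char property of r via sigma 1}.

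The main obstacle --- or rather the only point requiring care --- is the measurability assertion in the first preliminary point: one must know that $R_\s$ genuinely produces Borel functions on $X$, which requires invoking the measurability clause of the disintegration theorem (Theorem \ref{thm Rokhlin disintegration} together with Definition \ref{def system cond measures}(ii)) and matching it against the identification of $X/\xi_\sigma$ with $X$ via $\sigma$. Once the measure-theoretic bookkeeping is in place, properties (i) and (ii) are essentially formal consequences of the fiberwise structure. I would also remark that this $R_\s$ is automatically normalized, since $R_\s(\mathbbm 1)(x) = \mu_{C_x}(C_x) = 1$ because each conditional measure is a probability measure --- though this is not needed for the statement of the lemma.
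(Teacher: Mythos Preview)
Your proof is correct and follows essentially the same approach as the paper: the paper's proof also dismisses positivity (and normalization) as clear, then computes the pull-out identity via the same key observation that $f(\sigma(y)) = f(x)$ for all $y \in C_x = \sigma^{-1}(x)$. You are in fact more careful than the paper about the preliminary measurability and well-definedness points, which the paper leaves implicit.
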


\begin{proof}
Clearly, $R_\s$ is a positive normalized operator. To see that 
(\ref{eq char property of r via sigma 1}) holds, we 
calculate
\begin{eqnarray*}
  R_\s((f\circ\sigma) g)(x) & = & \int_{C_x} (f\circ\sigma)
  (y)g(y)\; d\mu_{C_x}\\
  \\
   &=& f(x) \int_{C_x} g(y)\; d\mu_{C_x}(y)\\
   \\
  &=& f(x) (R_\s g)(x).
\end{eqnarray*}
Here we used the fact that $f(\sigma(y)) = f(x)$ for $y 
\in C_x = \sigma^{-1}(x)$.
\end{proof}

For an onto endomorphism $\sigma$ acting on the space 
$\sms$, we consider the subalgebra $\mc A = 
\{\sigma^{-1} (B) : B \in \B\}$ of  $\B$.
It is a well-known fact that there exists the conditional 
expectation $\mathbb E_\s : L^2(X, \B, \mu) \to L^2(X, 
\sB, \mu)$. To simplify the formulas, we will use also the 
following notations: 
$\mc A = \sB$, $L^2(\mu) =  
L^2(X, \B, \mu)$, and $L^2(\mu_{\mathcal A}) =  
L^2(X, \sB, \mu)$. Below, we will describe the
 operator $\mathbb E_\s$ explicitly in terms of the 
 composition operator $S_\s$. 
 
It turns out that, for every transfer operator $R$, one
can define another operator which is, in some sense,
analogous to the conditional expectation. 
For this, let $(R, \sigma)$ be a normalized transfer 
operator. We define  
\be\label{eq E for R}
E : \FXB \to \mc F(X, \sB) : f \mapsto R(f) \cs. 
\ee
We discuss the properties of the operator $E$ in   
Proposition \ref{prop TO - condexp}. Some of them are
proved in \cite{BezuglyiJorgensen2018(book)}.

\begin{proposition} 
\label{prop TO - condexp} Let $R$ be a normalized transfer 
operator and $E(f) = R(f)\cs$. Then the following 
properties hold:

(1)  $E$ is positive and $E^2 = E$, 

\quad\ \ $E(\FXB) = \mc F(X, \sB)$, 

\quad\ \ $E|_{\mc F(X, \sB)} = id$, 
 
\quad \ \ $R \circ E\circ R = R^2$ and $R\circ E = R$.

(2) For $S_\sigma(f) = f\cs$, we have
$$
(RS_\sigma)(f) = f, \ \ \ \ \ (S_\sigma R)(f) = E(f). 
$$ 

(3) A bounded Borel function $f$ belongs to $\mc F(X, \sB)$
if and only if there exists a function $g \in \FXB$ such 
that $f = g\cs$.
\end{proposition}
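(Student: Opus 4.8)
The plan is to first establish the two identities of part (2), from which almost everything in (1) follows by purely formal manipulation, and to treat part (3) as a separate (and essentially the only genuinely measure-theoretic) input.

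\textbf{Step 1: Part (2).} I would first prove $(RS_\sigma)(f) = f$. Applying the pull-out property \eqref{eq char property of r via sigma 1} with $g = \mathbbm 1$ gives $R(f\cs) = R\big((f\cs)\,\mathbbm 1\big) = f\, R(\mathbbm 1)$, and since $R$ is normalized, $R(\mathbbm 1) = \mathbbm 1$, so $R(f\cs) = f$. The identity $(S_\sigma R)(f) = R(f)\cs = E(f)$ is simply the definition of $E$. Thus $E = S_\sigma R$ and $RS_\sigma = \mathrm{id}$ on $\FXB$.

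\textbf{Step 2: Part (3).} The implication $f = g\cs \Rightarrow f \in \mc F(X,\sB)$ is immediate: for a Borel set $B \subset \R$ we have $f^{-1}(B) = \sigma^{-1}\big(g^{-1}(B)\big) \in \sigma^{-1}(\B)$. For the converse I would invoke the standard factorization lemma for standard Borel spaces: a bounded $\sigma^{-1}(\B)$-measurable function is constant on each fiber $\sigma^{-1}(x)$, hence factors as $g\circ\sigma$, and here surjectivity of $\sigma$ is exactly what guarantees that $g$ is defined on all of $X$ (and is $\B$-measurable). This is the only external ingredient; although routine, the dependence on surjectivity of $\sigma$ is the point most deserving of a comment.

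\textbf{Step 3: Part (1), via Steps 1 and 2.} Positivity of $E = S_\sigma R$ holds because both $S_\sigma$ (composition with $\sigma$) and $R$ are positive. Idempotency: $E^2 = S_\sigma(RS_\sigma)R = S_\sigma\,R = E$ by Step 1. For the range, $E(f) = R(f)\cs$ is of the form $g\cs$ with $g = R(f) \in \FXB$, so $E(\FXB) \subseteq \mc F(X,\sB)$; conversely, if $h \in \mc F(X,\sB)$ then $h = g\cs$ by part (3), and $R(h) = R\big((g\cs)\,\mathbbm 1\big) = g\,R(\mathbbm 1) = g$, hence $E(h) = g\cs = h$. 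This yields simultaneously $E|_{\mc F(X,\sB)} = \mathrm{id}$ and $E(\FXB) = \mc F(X,\sB)$. Finally $R\circ E = (RS_\sigma)R = R$, and applying this once more, $R\circ E\circ R = R\circ R = R^2$. I expect no serious obstacle; the only step requiring real care is the measurable-factorization argument in part (3), everything else being a two-line computation from the pull-out property and normalization.
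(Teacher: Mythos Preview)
Your proposal is correct and proceeds by essentially the same direct computations as the paper, which merely states that ``these properties are proved directly'' and writes out only the verification of $R\circ E\circ R = R^2$ via the pull-out property and normalization. Your choice to establish (2) first and derive (1) from it is a slightly cleaner organization than the paper's, but the mathematical content is identical.
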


\begin{proof}
These properties are proved directly. We only check that 
$R \circ E\circ R = R^2$. Indeed, 
$$
R [ E( R(f))] = R[ R(R(f)\cs] = R^2(f) R(\mathbbm 1) =R^2(f).
$$
\end{proof} 

\begin{corollary}
Let $R$ be the transfer operator defined in 
\eqref{eq TO via cond syst meas Intro}. Then the conditional 
expectation $E : \FXB \to \mc F(X, \sB) : 
f \mapsto R(f) \cs$ acts by the formula
$$
E(f) =  R(f)\cs = \int_{\sigma^{-1}(\s (x))} f(y) \; 
d\mu_{C_{\sigma (x)}}(y).
$$
\end{corollary}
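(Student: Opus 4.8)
The plan is to take the formula for the transfer operator $R_\s$ given in \eqref{eq TO via cond syst meas Intro} and compose it with the substitution $x \mapsto \sigma(x)$, which is exactly the definition of $E$ from \eqref{eq E for R}. So first I would write out $R_\s(f)(x) = \int_{C_x} f(y)\, d\mu_{C_x}(y)$ with $C_x = \sigma^{-1}(x)$, and then substitute $\sigma(x)$ in place of the free variable $x$. This immediately yields $E(f)(x) = R_\s(f)(\sigma(x)) = \int_{C_{\sigma(x)}} f(y)\, d\mu_{C_{\sigma(x)}}(y)$ where $C_{\sigma(x)} = \sigma^{-1}(\sigma(x))$, which is precisely the claimed formula.

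The only genuine content to verify is that $E$ really is the conditional expectation onto $L^2(X, \sB, \mu)$, i.e., that the operator $f \mapsto R_\s(f)\cs$ coincides with $\mathbb E_\s$. But this is already supplied by Proposition \ref{prop TO - condexp}, together with Lemma \ref{lem R via cond syst meas} which ensures $R_\s$ is a normalized transfer operator: part (1) of the Proposition gives that $E$ is positive, idempotent, has range $\mc F(X, \sB)$, and restricts to the identity on $\mc F(X, \sB)$; part (2) identifies $E = S_\sigma R$. These are exactly the defining properties of the conditional expectation onto the $\sigma$-subalgebra $\sB$ (uniqueness of the conditional expectation then forces $E = \mathbb E_\s$ on $L^2(\mu)$, after the standard density/continuity extension from $\FXB$). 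Since all of this is cited or proved in the excerpt, I would simply invoke it rather than re-prove it.

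The one point that deserves a word of care — and which I would flag as the main (minor) obstacle — is the identification of the fiber over which we integrate. When we evaluate $R_\s(f)$ at the point $\sigma(x)$, the relevant element of the partition $\xi_\sigma$ is $C_{\sigma(x)} = \sigma^{-1}(\sigma(x))$, the full preimage fiber containing $x$, not merely $\{x\}$; and the measure used is the conditional measure $\mu_{C_{\sigma(x)}}$ attached to that fiber by Rokhlin's disintegration (Theorem \ref{thm Rokhlin disintegration}). So the formula should be read as: $E(f)$ is constant along each fiber $\sigma^{-1}(\sigma(x))$, with value equal to the average of $f$ over that fiber against $\mu_{C_{\sigma(x)}}$ — which is the expected behavior of a conditional expectation onto $\sB$. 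No estimates or limiting arguments are needed; the proof is a one-line substitution plus a citation to Proposition \ref{prop TO - condexp}.
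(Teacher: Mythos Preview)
Your first paragraph is correct and is exactly how the paper treats this Corollary: no separate proof is given, because the formula follows by substituting $\sigma(x)$ for the free variable in the definition $R_\s(f)(x) = \int_{C_x} f(y)\, d\mu_{C_x}(y)$, with $C_{\sigma(x)} = \sigma^{-1}(\sigma(x))$.

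Your second paragraph, however, is not needed and contains a small slip. The Corollary is only asserting the explicit integral formula for the operator $E = R(f)\cs$; it is \emph{not} claiming that $E$ coincides with the genuine conditional expectation $\mathbb E_\s$ on $L^2(\mu)$. That identification is addressed only later (Theorem~\ref{thm R = S^*} and the Remark after it, and Theorem~\ref{thm E vs E*}). Moreover, the properties you list from Proposition~\ref{prop TO - condexp} --- positivity, idempotence, range $\mc F(X,\sB)$, identity on $\mc F(X,\sB)$ --- are necessary but not sufficient to pin down $\mathbb E_\s$; one also needs self-adjointness in $L^2(\mu)$ (equivalently, $\int g\, E(f)\, d\mu = \int g f\, d\mu$ for all $g \in \mc F(X,\sB)$), which Proposition~\ref{prop TO - condexp} does not supply. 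So that paragraph should simply be dropped.
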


\subsection{Composition operators and Markovian functions}

We recall our notation: $\s$ is a surjective endomorphism of
a probability measure space $\sms$, $S_\s : f \mapsto 
f\cs $ is the composition operator, and $M(\s, \mu)$ is 
the set of Markovian functions.

Let $t = t(x)$ be a bounded Borel function, 
and $\mu$ a $\s$-invariant probability measure on $(X, \B)$.
We define the operator $P_t$ on $L^2(\mu)$ by setting
$$
P_t (f) = t (f\cs),\quad f \in L^2(\mu).
$$
We call the operator $P_t$ a \textit{weighted 
composition operator}. Clearly, $P_t$ is a bounded operator 
in $L^2(\mu)$.
  
\begin{theorem}\label{thm S* is TO}  
(1) For $\sigma \in End\sms$ as above, consider the 
composition 
operator $S_\sigma$ in the Hilbert space $L^2(\mu)$. 
Then the adjoint operator  $S_\sigma^*$ acts by the 
formula:
\be\label{eq formula for S*}
S_\sigma^*(g) = \frac{(gd\mu)\circ 
\sigma^{-1}}{d\mu}, \quad g\in L^2(\mu_{\mc A}).
\ee

(2) The adjoint operator $S_\sigma^*$ is a transfer operator
$$
S_\sigma^*(g(f\cs)) = f S_\sigma^*(g).
$$
The transfer operator $S_\sigma^*$ is normalized if and 
only if $\mu$ is $\sigma$-invariant.

(3) For a function $t \in \mc F(X, \B)$, the adjoint 
operator $P_t^*$ is a non-normalized transfer operator. 
\end{theorem}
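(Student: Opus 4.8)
The statement is Theorem \ref{thm S* is TO}, and I would attack its three parts in order, since each feeds into the next.

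For part (1), the plan is to compute $S_\sigma^*$ directly from the defining identity $\langle S_\sigma f, g\rangle_\mu = \langle f, S_\sigma^* g\rangle_\mu$ for all $f \in L^2(\mu)$, $g \in L^2(\mu_{\mc A})$. Expanding the left side gives $\int_X (f\cs)\, g\, d\mu$. Now $g$ is $\mc A$-measurable (i.e.\ $\sB$-measurable), so by Proposition \ref{prop TO - condexp}(3) we may write $g = \ol g \cs$ for some Borel $\ol g$; more to the point, the measure $g\, d\mu$ pushed forward under $\sigma^{-1}$ makes sense on $\B$, and changing variables along $\sigma$ gives $\int_X (f\cs)\, g\, d\mu = \int_X f \cdot \frac{d((g\,d\mu)\cs^{-1})}{d\mu}\, d\mu$, which is exactly \eqref{eq formula for S*}. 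The one point requiring care is that $(g\,d\mu)\cs^{-1}$ is absolutely continuous with respect to $\mu$: this uses that $\mu$ is $\s$-invariant (more generally that $\s$ is non-singular), so the Radon--Nikodym derivative exists. I would state this explicitly.

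For part (2), the pull-out property $S_\sigma^*(g\,(f\cs)) = f\, S_\sigma^*(g)$ is immediate from the formula in part (1) once one observes that $(f\cs)\, g\, d\mu$ pushed forward under $\sigma^{-1}$ picks up the factor $f$ outside; alternatively, and more cleanly, it follows by adjoint-duality: for all $h$,
\[
\langle h, S_\sigma^*(g\,(f\cs))\rangle_\mu = \langle h\cs,\, g\,(f\cs)\rangle_\mu = \langle (hf)\cs,\, g\rangle_\mu = \langle hf,\, S_\sigma^* g\rangle_\mu = \langle h,\, f\, S_\sigma^* g\rangle_\mu,
\]
using that $f$ is real-valued so multiplication by $f$ is self-adjoint. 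Positivity of $S_\sigma^*$ is clear from \eqref{eq formula for S*} since pushing forward a nonnegative measure yields a nonnegative measure. For normalization: $S_\sigma^*(\mathbbm 1) = \frac{d(\mu\cs^{-1})}{d\mu} = \rho_\mu$, which equals $\mathbbm 1$ precisely when $\mu\cs^{-1} = \mu$, i.e.\ when $\mu$ is $\s$-invariant; this is exactly Lemma \ref{lem_isom}(1) combined with the elementary fact that $S_\sigma$ is an isometry iff $S_\sigma^* S_\sigma = \mathbb I$ iff $S_\sigma^*$ is normalized (using the transfer-operator identity $S_\sigma^* S_\sigma(f) = f\, S_\sigma^*(\mathbbm 1)$).

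For part (3), with $P_t(f) = t\,(f\cs)$ one has $P_t^* = S_\sigma^* M_t$ where $M_t$ is multiplication by $t$, so $P_t^*(g) = S_\sigma^*(tg)$. The pull-out property follows the same way: $P_t^*(g\,(f\cs)) = S_\sigma^*(tg\,(f\cs)) = f\, S_\sigma^*(tg) = f\, P_t^*(g)$, again using that $f$ is real. Positivity holds when $t \ge 0$ (which is the implicit convention here for the statement to make sense); and $P_t^*(\mathbbm 1) = S_\sigma^*(t) = \frac{d((t\,d\mu)\cs^{-1})}{d\mu}$, which is generally not $\mathbbm 1$, so $P_t^*$ is non-normalized in general. \textbf{The main obstacle} is purely a matter of bookkeeping in part (1): one must be careful that $S_\sigma^*$ is a priori only defined on $L^2(\mu)$, whereas the natural domain for the formula to be an honest transfer operator is the $\mc A$-measurable functions, and one should check the change-of-variables step is legitimate (i.e.\ that $\sigma$, being at most countable-to-one or via the disintegration of Theorem \ref{thm Rokhlin disintegration}, pushes measures forward correctly). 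Everything else is a routine adjoint computation exploiting that the weights are real-valued.
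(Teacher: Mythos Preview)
Your proposal is correct and follows essentially the same route as the paper: for (1) the adjoint identity plus the change-of-variables $\int (f\cs)\,g\,d\mu = \int f\,d((g\,d\mu)\cs^{-1})$; for (2) the duality computation $\langle h, S_\sigma^*((f\cs)g)\rangle_\mu = \langle (hf)\cs, g\rangle_\mu = \langle h, f\,S_\sigma^*(g)\rangle_\mu$ and the observation $S_\sigma^*(\mathbbm 1) = d(\mu\cs^{-1})/d\mu$; and for (3) the paper also derives $P_t^*(g) = (tg\,d\mu)\cs^{-1}/d\mu$ and proves pull-out by duality (you factor through the already-established pull-out for $S_\sigma^*$, which is a minor variant). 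Your added remarks about absolute continuity and the domain of $S_\sigma^*$ are more careful than the paper's treatment, but the underlying argument is the same.
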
   

\begin{proof}
(1) For functions $f, g \in L^2(\mu)$, we have
$$
\ba 
\langle S_\s f, g\rangle_\mu = & \int_X (f\cs) g \; d\mu\\
= & \int f \; (gd\mu)\csi1\\
= & \int f \frac {(gd\mu)\csi1} {d\mu} \; d\mu \\
= & \langle f,  S_\s^*  g\rangle_\mu
\ea 
$$
which proves \eqref{eq formula for S*}. 

(2) To prove the pull-out property,  we compute, for 
arbitrary functions $f, g, h \in L^2(\mu)$, 
$$
\ba 
\int_X h S_\sigma^*((f\cs) g) \; d\mu = & 
\int_X S_\sigma(h) (f\cs) g\; d\mu\\
= & \int_X ((hf)\cs) \; g\; d\mu \\ 
= & \int_X hf S_\sigma^*(g) \; d\mu. 
\ea 
$$ 
Finally, we see that
$$
\int_X f \; d\mu\csi1=  \int_X (f\cs) \mathbbm 1 \; 
d\mu  = \int_X f S_\sigma^*(\mathbbm 1)\; d\mu, 
$$
and $S_\s^*(\mathbbm 1) = \mathbbm 1 \ 
\Longleftrightarrow \ \mu\csi1  =\mu$.

(3) The same proof as in (1) gives the formula
$$ 
P_t^*(g) = \frac{(t g d\mu)\circ 
\sigma^{-1}}{d\mu}, \quad g\in L^2(\mu_{\mc A}). 
$$
This shows that $P^*_t$ is not normalized. 

To prove that $P_t^*$ satisfies the pull-out property,
we write
$$
\ba 
\langle h, P_t^*(g(f\cs)) \rangle_\mu = & 
\langle P_t(h), g(f\cs) \rangle_\mu \\
= & \langle t (h\cs), g(f\cs) \rangle_\mu\\
= & \langle P_t(hf), g \rangle_\mu \\
= & \langle h, f P_t^*(g) \rangle_\mu. \\
\ea 
$$
\end{proof}

\begin{corollary}\label{cor cond exp is ss*}
(1) Let $\sigma \in End\sms$ be such that $\mu = \mu\csi1$.
Then the conditional expectation 
$\mathbb E_\s : L^2\sms \to L^2(X, \mc A, \mu_{\mc A})$ 
can be represented as $S_\sigma S^*_\sigma$.

(2) If $\mu$ is forward quasi-invariant, then the 
conditional expectation $\mathbb E_\s$ coincides with 
$T_\s T^*_\s$ where the isometry $T_\s (f)=\sqrt{\omega_\mu} 
(f\cs)$ is defined in Lemma \ref{lem_isom}. 
\end{corollary}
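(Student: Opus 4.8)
The plan is to identify, in both cases, the operator in question with the orthogonal projection of $L^2(\mu)$ onto its closed subspace $L^2(X,\sB,\mu)$, and then to observe that this orthogonal projection is precisely the conditional expectation $\mathbb E_\s$. Indeed, for $f\in L^2(\mu)$ the element $\mathbb E_\s f$ is characterized as the unique function in $L^2(X,\sB,\mu)$ with $\int_X (\mathbb E_\s f)\,g\;d\mu = \int_X f g\;d\mu$ for all $g\in L^2(X,\sB,\mu)$, which is exactly the variational description of the orthogonal projection onto $L^2(X,\sB,\mu)$. So it suffices to check that the relevant operator ($S_\s$ in part (1), $T_\s$ in part (2)) is an isometry whose range is \emph{all} of $L^2(X,\sB,\mu)$: if $V:L^2(\mu)\to L^2(X,\sB,\mu)$ is a surjective isometry, then $V^*V$ is the identity, so $VV^*$ is self-adjoint and idempotent with range $L^2(X,\sB,\mu)$, hence it is the orthogonal projection onto that subspace.

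For part (1), since $\mu=\mu\circ\sigma^{-1}$, Lemma \ref{lem_isom}(1) gives that $S_\s:L^2(X,\B,\mu)\to L^2(X,\sB,\mu)$ is an isometry. The inclusion $S_\s(L^2(\mu))\subseteq L^2(X,\sB,\mu)$ is immediate, so only the reverse inclusion needs an argument. Given $h\in L^2(X,\sB,\mu)$, truncation reduces to bounded $h$, and Proposition \ref{prop TO - condexp}(3) produces a bounded Borel $g$ with $h=g\circ\sigma$; the $\sigma$-invariance of $\mu$ yields $\|g\|_{L^2(\mu)}=\|g\circ\sigma\|_{L^2(\mu)}=\|h\|_{L^2(\mu)}$, so $g\in L^2(\mu)$ and $h=S_\s g$. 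Thus $S_\s$ is onto $L^2(X,\sB,\mu)$, and by the reduction above $S_\s S_\s^*=\mathbb E_\s$.

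For part (2), Lemma \ref{lem_isom}(3) already states that $T_\s:f\mapsto \sqrt{\omega_\mu}\,(f\circ\sigma)$ is an isometry from $L^2(X,\B,\mu)$ \emph{onto} $L^2(X,\sB,\mu)$ (surjectivity uses $\omega_\mu>0$ $\mu$-a.e., which is forced by forward quasi-invariance, and writes a given $h\in L^2(X,\sB,\mu)$ as $\sqrt{\omega_\mu}\,(k\circ\sigma)$ by factoring $h/\sqrt{\omega_\mu}$ through $\sigma$). Hence $T_\s T_\s^*$ is again the orthogonal projection onto $L^2(X,\sB,\mu)$, i.e. $\mathbb E_\s$. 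Alternatively, one may combine Theorem \ref{thm S* is TO}(2) with Proposition \ref{prop TO - condexp}(2): for the normalized transfer operator $R=S_\s^*$ one has $S_\s S_\s^*=E$, and properties in Proposition \ref{prop TO - condexp}(1) together with self-adjointness of $S_\s S_\s^*$ identify $E$ with $\mathbb E_\s$.

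The only step that is not purely formal is the surjectivity of $S_\s$ (resp.\ $T_\s$) onto $L^2(X,\sB,\mu)$, namely that every $\sB$-measurable $L^2$-function factors through $\sigma$ with preservation of the $L^2$-norm; this is exactly where the invariance hypothesis $\mu=\mu\circ\sigma^{-1}$ in (1) and the forward quasi-invariance in (2) are used, and it is what upgrades the isometric image from a merely dense subspace to the full space $L^2(X,\sB,\mu)$. Everything else reduces to the standard fact that the orthogonal projection onto the $L^2$-space of a sub-$\sigma$-algebra coincides with the conditional expectation, which involves no dynamics.
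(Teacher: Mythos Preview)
Your proof is correct and reaches the same conclusion as the paper, but the route is organized a bit differently. The paper verifies the projection properties of $S_\sigma S_\sigma^*$ by direct computation: it checks idempotence from $S_\sigma^* S_\sigma = \mathbb I$, then uses the pull-out property of the transfer operator $S_\sigma^*$ (Theorem \ref{thm S* is TO}) to show that $S_\sigma S_\sigma^*$ fixes $L^2(\mu_{\mc A})$ and that $f - S_\sigma S_\sigma^* f \perp L^2(\mu_{\mc A})$. You instead invoke the general Hilbert-space fact that a surjective isometry $V$ onto a closed subspace yields $VV^*$ equal to the orthogonal projection onto that subspace, and then focus on establishing surjectivity of $S_\sigma$ (resp.\ $T_\sigma$) onto $L^2(X,\sB,\mu)$ via Proposition \ref{prop TO - condexp}(3) and Lemma \ref{lem_isom}(3). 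Both arguments ultimately hinge on the same fact---every $\sB$-measurable $L^2$ function factors through $\sigma$---which the paper uses implicitly when writing $f = g\circ\sigma$ and which you isolate explicitly. Your version is slightly more structural and avoids the transfer-operator calculus; the paper's version makes the role of the normalized transfer operator $S_\sigma^*$ more visible, which is thematically useful for what follows in Section \ref{sect End Operators}.
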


\begin{proof} 
The fact that $(S_\sigma S^*_\sigma)^2 = S_\sigma 
S_\sigma^*$ follows from the identity $S_\sigma^* 
S_\sigma = \mathbb I$. 

Next, we verify that $\langle f, S_\sigma S_\sigma^*(h)
\rangle_\mu  = \langle f, h\rangle_\mu$ for 
$h \in L^2(\mu_{\mc A})$. Recall that if $f \in 
L^2(\mu_{\mc A})$, then there 
exists $g \in L^2(\mu)$ such that $f = g\cs$. Then, 
using the fact that $S_\s^*$ is a normalized transfer 
operator (Theorem \ref{thm S* is TO}), we have
$$
\ba 
\langle g\cs, S_\sigma S_\sigma^*(h) \rangle_\mu 
= & \langle S_\s^*(g\cs), S_\sigma^*(h) \rangle_\mu \\
= &\langle g S_\s^*(\mathbbm 1), S_\sigma^*(h)\rangle_\mu\\
=& \langle g, S_\sigma^*(h) \rangle_\mu \\
= & \langle g\cs, h \rangle_\mu \\
= & \langle f, h \rangle_\mu \\
\ea 
$$
The orthogonality of the projection 
$S_\sigma S_\sigma^*$ is obtained from the relation 
$$
\langle S_\sigma S_\sigma^* f, g\cs \rangle_\mu = 
\langle S^*_\s f, g \rangle_\mu = 
\langle f, g\cs  \rangle_\mu. 
$$
It proves that $\mathbb E_\s = S_\sigma S_\sigma^*$. 

(2) The case of a quasi-invariant measure $\mu$ is 
considered similarly. We note that $\omega_\mu$ is a Borel
function measurable with respect to $\sB$. Hence, 
every function $f\in L^2(\mu_{\mc A})$ there exists a
function $g \in L^2(\mu)$ such that $f = \sqrt{\omega_\mu}
(g\cs)$. Then we repeat the above calculations.
We leave the details to the reader. 
\end{proof}

We associated with every endomorphism $\s \in End\sms$
two transfer operators $R$ and $S^*_\s$. It turns out that 
they coincide in $L^2(\mu)$.

\begin{theorem}\label{thm R = S^*}
Let $\s \in End\sms$. Then the transfer operators $R_\s$
and $S^*_\s$ coincide in $L^2(\mu)$, where $R_\s$ is   
defined in \eqref{eq TO via cond syst meas Intro} and
$S_\s^*$ satisfies \eqref{eq formula for S*}.
\end{theorem}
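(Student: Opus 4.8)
The plan is to show that $R_\s$ implements the Hilbert-space adjoint of $S_\s$ on $L^2(\mu)$; since the adjoint of a bounded operator is unique, the identity $S^*_\s = R_\s$ then follows at once. So the whole content is the single equality
\[
\langle S_\s f,\, g\rangle_\mu \;=\; \langle f,\, R_\s g\rangle_\mu, \qquad f, g \in L^2(\mu),
\]
together with the preliminary remark that $R_\s$ --- defined on bounded Borel functions in Example \ref{ex_RokhlinTO} --- in fact extends to a bounded operator on $L^2(\mu)$.

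First I would dispose of the boundedness point. Applying the Cauchy--Schwarz inequality inside each conditional probability space $(C_x, \B_{C_x}, \mu_{C_x})$ gives $|R_\s(g)(x)|^2 \le R_\s(|g|^2)(x)$; integrating against $\mu$ and using the disintegration identity \eqref{eq function disintegration for endo} yields $\|R_\s g\|^2_{L^2(\mu)} \le \int_X R_\s(|g|^2)\, d\mu = \int_X |g|^2\, d\mu$. Hence $R_\s$ is a contraction on $\FXB$ for the $L^2(\mu)$-norm and extends uniquely to a contraction on $L^2(\mu)$, so it suffices to verify the displayed identity for bounded Borel $f, g$.

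The core is a one-line disintegration computation. For bounded Borel $f$ and $g$, apply \eqref{eq function disintegration for endo} to $(f\cs)\, g$:
\[
\langle S_\s f,\, g\rangle_\mu = \int_X (f\cs)(x)\, g(x)\; d\mu(x) = \int_X \Big( \int_{C_x} (f\cs)(y)\, g(y)\; d\mu_{C_x}(y) \Big)\, d\mu(x).
\]
Since $\s(y) = x$ for every $y \in C_x = \sigma^{-1}(x)$, we have $(f\cs)(y) = f(x)$ on $C_x$, so the inner integral is $f(x)\int_{C_x} g\, d\mu_{C_x} = f(x)\, R_\s(g)(x)$; this is exactly the computation behind the pull-out property $R_\s((f\cs)g) = f\, R_\s(g)$ of Lemma \ref{lem R via cond syst meas}. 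Therefore $\langle S_\s f,\, g\rangle_\mu = \int_X f\, R_\s(g)\; d\mu = \langle f,\, R_\s g\rangle_\mu$, as required. As an independent cross-check one recovers the same conclusion from the explicit formula \eqref{eq formula for S*}: for $A \in \B$, writing $\mathbbm 1_{\sigma^{-1}(A)} = \mathbbm 1_A \cs$ and applying \eqref{eq function disintegration for endo} to $(\mathbbm 1_A\cs)\, g$ gives $(g\, d\mu)\cs^{-1}(A) = \int_A R_\s(g)\, d\mu$, i.e. $\dfrac{(g\, d\mu)\cs^{-1}}{d\mu} = R_\s(g)$.

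The only real obstacle is bookkeeping around the disintegration: one must make sure $\{\mu_{C_x}\}$ is genuinely the Rokhlin system of conditional measures of $\mu$ over $\xi_\s$ and that \eqref{eq function disintegration for endo} is used with the outer integral taken against $\mu$ itself --- equivalently, that no stray Radon--Nikodym factor $\rho_\mu = \dfrac{d\mu\cs^{-1}}{d\mu}$ is left unaccounted for when identifying the quotient $(X/\xi_\s, \mu_{\xi_\s})$ with $(X,\mu)$. Once this normalization is pinned down the argument is routine, since $R_\s$ and $S^*_\s$ are then both normalized transfer operators on $L^2(\mu)$ and must coincide by uniqueness of the adjoint.
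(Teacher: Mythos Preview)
Your proof is correct and follows essentially the same route as the paper: both arguments compute $\langle S_\s f, g\rangle_\mu$ via the disintegration \eqref{eq function disintegration for endo}, use that $(f\cs)(y) = f(x)$ for $y \in C_x$, and read off $R_\s$ as the adjoint. Your version is somewhat more careful in that you explicitly verify the $L^2$-boundedness of $R_\s$ and add the Radon--Nikodym cross-check, whereas the paper's proof records only the core disintegration computation.
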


\begin{proof}
We compute $S^*_\s(f)$ using the disintegration of
$\mu$ with respect to the conditional measures $\mu_x$
on $C_x = \s^{-1}(x)$:
$$
\ba
\langle S(g), f\rangle_\mu = &\int_X(g\cs) f\;d\mu\\
= & \int_X\left( \int_{C_x} g(\s(y))  f(y)\;d\mu_x(y) 
\right) \; d\mu(x) \\
= & \int_X g(x) \left( \int_{C_x}  f(y)\; d\mu_x(y)
\right) \; d\mu(x) \\
=& \langle g, S^*_\s(f) \rangle_\mu
\ea
$$
From the latter, we see that 
$$
S^*_\s(f) = \int_{C_x}  f(y)\; d\mu_x(y) = R_\s(f).
$$
\end{proof}

\begin{remark}
(1) It follows from Lemma \ref{lem:strong inv} and 
Theorem \ref{thm R = S^*} that, for the 
transfer operator $S^*_\s$ in $L^2(\mu)$, the measure
$\mu$ is strong invariant if and only if it is 
$\s$-invariant.

(2) Theorem \ref{thm R = S^*} implies that $E(f) = 
R(f)\cs$ coincides with $\mathbb E_\s$ for $R = S^*_\s$.
\end{remark}

We now consider weighted composition operators  where 
the weight function is Markovian; for consistency, we will 
write $P_\va$ for such a weighted composition operator.  
We will continue discussing the properties of weighted 
composition operators in the next section. 

Let $\sigma \in End\sms$ be a surjective endomorphism, 
and $\mu$ is a forward quasi-invariant measure.  
Consider the operator 
\be\label{eq def Phi}
P_\varphi  : f \to \varphi (f\cs)
\ee
which is formally defined in $\FXB$. It can be  also 
written as 
$$
P_\va(f) = M_\varphi S_\sigma(f), \quad f \in \FXB
$$ 
where $M_\varphi$ is the multiplication operator.

\begin{lemma} Let $\sigma, \va,$ and $P_\va$ be as above.
Then $\varphi$ is a Markovian function ($\va \in 
M(\sigma, \mu)$) if and only if $\mu$ is $P_\va$-invariant, 
i.e., $\mu P_\va = \mu$ where 
$$
\mu P_\va (f) = \int_X P_\va(f) \; d\mu.
$$
In particular, $\mu = \mu P_{\omega_\mu}$.
\end{lemma}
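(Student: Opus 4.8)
The plan is to read this lemma as a direct translation between the defining identity of a Markovian function and the fixed-point equation for the action of the weighted composition operator $P_\va$ on measures. First I would unwind the left-hand side: by definition of the action of a (non-normalized) transfer operator on $M_1(X)$, for every $f \in \FXB$ one has $\mu P_\va(f) = \int_X P_\va(f)\,d\mu = \int_X \va\,(f\cs)\,d\mu$, while $\mu(f) = \int_X f\,d\mu$. Thus $\mu P_\va = \mu$, viewed as an identity of measures (equivalently, of integrals against bounded Borel functions), is precisely equation \eqref{eq-varphi} of Definition \ref{def_Markovian}, restricted to $\FXB$.

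To turn this into the stated equivalence ($\va \in M(\sigma,\mu)$ iff $\mu P_\va = \mu$), it remains to match test classes, since Definition \ref{def_Markovian} demands \eqref{eq-varphi} for all $f \in L^1(\mu)$, whereas the measure identity only tests bounded $f$. One direction is trivial, as $\FXB \subset L^1(\mu)$. For the converse I would extend \eqref{eq-varphi} from $\FXB$ to $L^1(\mu)$ by an elementary approximation: truncate $f \in L^1(\mu)$ by bounded functions $f_n \to f$ with $|f_n| \le |f|$, note $f_n \cs \to f\cs$, and pass to the limit on both sides (the right-hand side converging to $\int_X f\,d\mu$, which also forces finiteness on the left), using that $\va$ is bounded; monotone convergence suffices in the case $\va \ge 0$, which in particular covers $\va = \omega_\mu$. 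This yields the equivalence.

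The final ``in particular'' claim is then immediate: forward quasi-invariance of $\mu$ means, via \eqref{eq Markovian}, exactly that $\omega_\mu$ obeys \eqref{eq-varphi}, i.e.\ $\omega_\mu \in M(\sigma,\mu)$ (cf.\ Remark \ref{rem Mark}(2)), so the equivalence just established gives $\mu P_{\omega_\mu} = \mu$. I do not expect a genuine obstacle here: the lemma is essentially a definitional identification, and the only mild care needed is the density argument reconciling the domain $L^1(\mu)$ of Definition \ref{def_Markovian} with the domain $\FXB$ on which the measure action of $P_\va$ is defined.
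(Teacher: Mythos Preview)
Your proposal is correct and follows essentially the same approach as the paper: both simply unwind $\mu P_\va(f)=\int_X \va\,(f\cs)\,d\mu$ and identify it with the Markovian identity \eqref{eq-varphi}. The paper's proof is the one-line version of yours, omitting the domain reconciliation between $\FXB$ and $L^1(\mu)$ that you add; that extra approximation step is harmless additional care rather than a different route.
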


\begin{proof}
Indeed, if $\va$ is Markovian, then 
$$
\int_X P_\va(f) \; d\mu = \int_X (f\cs) \va \; d\mu = 
\int_X f \; d\mu  
$$ 
which means that $\mu$ is $P_\va$-invariant. The 
converse statement also follows from the relation above.
\end{proof}

\begin{lemma}
Let $\va \in M(\s, \mu)$ and $P_\va$ a weighted composition
operator. Them $P_\va : L^1(\mu) \to L^1(\mu_{\mc A})$ 
and $P_{\sqva} :  L^2(\mu) \to L^2(\mu_{\mc A})$ are 
isometric operators.
\end{lemma}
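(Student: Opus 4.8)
The plan is to verify the isometry property directly from the defining Markovian relation \eqref{eq-varphi}, treating the two cases ($L^1$ with weight $\va$, and $L^2$ with weight $\sqva$) separately but by the same mechanism. For the $L^1$ statement, I would first observe that $P_\va$ maps into $\FXB$-functions of the form (something)$\cs$ times $\va$; the content is that it preserves the $L^1$-norm. The key step is: for $f\in L^1(\mu)$ with $f\ge 0$, apply \eqref{eq-varphi} to get $\int_X P_\va(f)\,d\mu = \int_X (f\cs)\va\,d\mu = \int_X f\,d\mu$, so $\|P_\va(f)\|_1 = \|f\|_1$ for nonnegative $f$. For general real (or complex) $f$, I would need the slightly subtler point that $|P_\va(f)| = \va\,|f\cs| = P_\va(|f|)$ pointwise a.e., using that $\va\ge 0$ (a Markovian function is nonnegative, since testing \eqref{eq-varphi} against indicators of sets forces positivity); then $\|P_\va(f)\|_1 = \int_X P_\va(|f|)\,d\mu = \||f|\|_1 = \|f\|_1$. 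So the range lands in $L^1(\mu_{\mc A})$ because $P_\va(f) = \va\,(f\cs)$ is $\sB$-measurable (as $\va$ is $\sB$-measurable, being $\omega_\mu$ up to the identification noted after \eqref{eq Markovian}, and $f\cs$ is always $\sB$-measurable).

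For the $L^2$ statement with weight $\sqva$, the mechanism is the same but one squares first. For $f\in L^2(\mu)$,
$$
\|P_{\sqva}(f)\|_2^2 = \int_X \va\,|f\cs|^2\,d\mu = \int_X (|f|^2\cs)\,\va\,d\mu = \int_X |f|^2\,d\mu = \|f\|_2^2,
$$
where the middle equality is \eqref{eq-varphi} applied to the $L^1$ function $|f|^2$ (note $|f|^2\in L^1(\mu)$ since $f\in L^2(\mu)$, so the Markovian identity is legitimately applicable), and I use $|f\cs|^2 = |f|^2\cs$. Again $P_{\sqva}(f) = \sqva\,(f\cs)$ is $\sB$-measurable, so the operator maps $L^2(\mu)$ into $L^2(\mu_{\mc A})$.

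The only genuine subtlety — and the step I would flag as the main point to get right rather than a real obstacle — is the measurability/membership bookkeeping: that $\va$ (hence $\sqva$) is $\sB$-measurable so the images land in the claimed subspaces, and that one is entitled to apply \eqref{eq-varphi} to the functions $|f|$ and $|f|^2$ (legitimate because these are in $L^1(\mu)$ when $f\in L^1(\mu)$ resp. $f\in L^2(\mu)$). Everything else is a one-line substitution. I would close by noting that this is consistent with, and refines, Lemma \ref{lem_isom}(3), where $\va = \omega_\mu$ was the specific Markovian function used.
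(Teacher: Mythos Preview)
Your norm-preservation computations are exactly the direct verification the paper intends; its own proof reads simply ``Straightforward.''

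One point to correct in your bookkeeping: your justification that the image lands in $L^p(\mu_{\mc A})$ is off. A general $\va \in M(\s,\mu)$ is \emph{not} necessarily equal to $\omega_\mu$, nor is it automatically $\sB$-measurable or nonnegative. The Markovian relation \eqref{eq-varphi} only tests $\va$ against $\sB$-measurable functions $(f\cs)$, so it pins down $\mathbb E_\s(\va)$, not $\va$ itself; there can be many Markovian functions (Remark~\ref{rem Mark}), and the paper explicitly \emph{assumes} positivity when needed (e.g.\ Theorem~\ref{thm Phi in L2}, Section~\ref{subs q-inv meas}). So your argument that testing against indicators forces $\va\ge 0$, and your identification of $\va$ with $\omega_\mu$, do not go through in general. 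The isometry of norms in your display is unaffected by this, but the claim about the codomain $L^p(\mu_{\mc A})$ really needs the extra hypothesis that $\va$ is $\sB$-measurable, which neither you nor the paper makes explicit here.
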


\begin{proof} Straightforward. 
\end{proof}

\begin{proposition}\label{prop phi markovian} 
Let $\s \in End\sms$  and $\va$ is a function from 
$L^2(\mu)$. Then $\varphi$ is a Markovian function if and 
only if $S^*_\sigma(\varphi) = \mathbbm 1$.
\end{proposition}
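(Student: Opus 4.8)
The plan is to derive the equivalence directly from the characterization of $S_\sigma^*$ as a transfer operator in Theorem \ref{thm S* is TO}, reducing everything to the single \emph{master identity}
$$
\int_X (f\cs)\,\varphi \; d\mu \;=\; \int_X f\, S_\sigma^*(\varphi)\; d\mu ,
$$
valid for every bounded Borel function $f$. First I would establish this identity without appealing to boundedness of $S_\sigma$. Since $\mu$ is a probability measure and $\varphi\in L^2(\mu)\subset L^1(\mu)$, the set function $A\mapsto \int_{\sigma^{-1}(A)}\varphi\,d\mu = \big((\varphi\,d\mu)\cs^{-1}\big)(A)$ is a finite signed measure; non-singularity of $\sigma$, equation \eqref{eq quai-inv meas}, shows it is absolutely continuous with respect to $\mu$, so its Radon--Nikodym derivative $S_\sigma^*(\varphi)=\frac{(\varphi\,d\mu)\cs^{-1}}{d\mu}$ is a well-defined element of $L^1(\mu)$ given by formula \eqref{eq formula for S*}. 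The master identity is then just the change-of-variables formula $\int f\,d\big((\varphi\,d\mu)\cs^{-1}\big)=\int (f\cs)\varphi\,d\mu$ rewritten via that derivative (equivalently, it is the pull-out/adjoint relation for $S_\sigma^*$ from Theorem \ref{thm S* is TO} with the special function $g=\varphi$).

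Next I would read off both implications from the master identity. If $\varphi$ is Markovian, then by Definition \ref{def_Markovian} the left-hand side equals $\int_X f\,d\mu$ for every bounded Borel $f$, hence $\int_X f\,S_\sigma^*(\varphi)\,d\mu=\int_X f\cdot\mathbbm 1\,d\mu$ for all such $f$; since $\mu$ is $\sigma$-finite and both functions lie in $L^1(\mu)$, this forces $S_\sigma^*(\varphi)=\mathbbm 1$ $\mu$-a.e. Conversely, if $S_\sigma^*(\varphi)=\mathbbm 1$, the master identity gives $\int_X (f\cs)\varphi\,d\mu=\int_X f\,d\mu$ for all bounded Borel $f$, and extending by a truncation argument to all $f\in L^1(\mu)$ recovers exactly the defining relation \eqref{eq-varphi}, so $\varphi\in M(\sigma,\mu)$.

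The only point requiring some care — and the step I would regard as the main (minor) obstacle — is the passage between ``for all bounded $f$'' and ``for all $f\in L^1(\mu)$'' in the definition of a Markovian function, ensuring the integrals stay finite along the way. Here I would approximate a general $f\in L^1(\mu)$ by its truncations $f\,\mathbbm 1_{\{|f|\le n\}}$, split into positive and negative parts, and invoke monotone convergence for $\int_X f\,d\mu$ and for $\int_X (f\cs)\varphi\,d\mu$ (using $\varphi\ge 0$ off a null set when $\varphi$ is Markovian, or simply dominated convergence via $\varphi\in L^2(\mu)$ in the converse direction). Everything else is a direct application of Theorem \ref{thm S* is TO} and the explicit formula \eqref{eq formula for S*}.
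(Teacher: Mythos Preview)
Your proposal is correct and follows essentially the same route as the paper: both arguments reduce to the single identity $\int_X (f\cs)\varphi\,d\mu=\int_X f\,S_\sigma^*(\varphi)\,d\mu$ and read off the equivalence from it. The paper does this in one line by writing $\int_X (f\cs)\varphi\,d\mu=\langle S_\sigma f,\varphi\rangle_\mu=\langle f,S_\sigma^*\varphi\rangle_\mu$ for $f\in L^2(\mu)$, whereas you obtain the same identity via the Radon--Nikodym description \eqref{eq formula for S*} of $S_\sigma^*$ and are more explicit about the approximation step extending from bounded test functions to $L^1(\mu)$; the core idea is identical.
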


\begin{proof} Let $f \in L^2(\mu)$. 
Then the result follows from the following relations:
$$`
\ba 
\int_X f\; d\mu =  & \int_X (f\cs)\varphi\; d\mu\\
= & \int S_\s(f) \varphi\; d\mu \\
=  & \int_X f S_\sigma^*(\varphi) \; d\mu.
\ea
$$
\end{proof}
  
\begin{theorem}\label{thm Phi in L2}
Let $P_\va$ be defined in $L^2(\mu)$ according to 
\eqref{eq def Phi} where $\varphi> 0$.
\noindent
(1) The following statements are equivalent:

(i)  $P_\va$ is an isometry in $L^2(\mu)$;

(ii) the composition operator $S_\sigma$ is an 
isometry in $L^2(\varphi \mu)$;

(iii)  $S^*_\sigma(\va) = \mathbbm 1$; 

(iv)  
$$
\frac{(\va  d\mu) \csi1}{d\mu} = 1 \ \ \mathrm{a.e.}
$$ 

\noindent
(2) If $\va$ is a positive Borel function from $L^2(\mu)$, 
then the adjoint  operator $P^*_\va$ is 
a transfer operator and $P^*_\va$ is normalized if 
and only if $\va$ is Markovian. 
\end{theorem}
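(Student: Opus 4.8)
The plan is to prove part~(1) by showing that each of (i)--(iv) is equivalent to the single statement ``$\va\in M(\sigma,\mu)$'', and to deduce part~(2) from Theorem~\ref{thm S* is TO}(3) together with the same Markovian criterion. Two of the links are pure bookkeeping on top of earlier results, so I would dispose of them first. The equivalence (iii)$\Leftrightarrow$(iv) is just the explicit adjoint formula $S^*_\sigma(g)=\dfrac{(g\,d\mu)\circ\sigma^{-1}}{d\mu}$ of Theorem~\ref{thm S* is TO}(1) evaluated at $g=\va$: the left-hand side of (iv) \emph{is} $S^*_\sigma(\va)$. And (iii)$\Leftrightarrow$``$\va$ Markovian'' is exactly Proposition~\ref{prop phi markovian}. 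So the real content is to connect the two isometry conditions (i) and (ii) to the Markovian property.

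For (i) I would expand $\|P_\va f\|_\mu^2=\langle P_\va f,P_\va f\rangle_\mu=\int_X(|f|^2\circ\sigma)\,\va\,d\mu$. Demanding that this equal $\int_X|f|^2\,d\mu$ for every $f\in L^2(\mu)$, and using that every nonnegative function in $L^1(\mu)$ is of the form $|f|^2$ with $f\in L^2(\mu)$, we see that this requirement is precisely the defining Markovian identity \eqref{eq-varphi} for $\va$; conversely \eqref{eq-varphi} makes $P_\va$ norm-preserving, hence (being linear) an isometry. For (ii) I would use the same identity in the form $\|P_\va f\|_\mu^2=\int_X(|f|^2\circ\sigma)\,\va\,d\mu=\|S_\sigma f\|^2_{\va\mu}$, which shows at once that $P_\va$ is an isometry of $L^2(\mu)$ iff $S_\sigma$ carries $L^2(\mu)$ isometrically into $L^2(\va\mu)$; unwinding that statement gives again \eqref{eq-varphi}. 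This closes the cycle for part~(1).

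Part~(2) is essentially the case $t=\va$ of Theorem~\ref{thm S* is TO}(3). The pull-out property $P^*_\va\big(g\,(f\circ\sigma)\big)=f\,P^*_\va(g)$ follows from the same adjoint computation carried out there, and positivity of $P^*_\va$ from $\va>0$ together with positivity of the push-forward/Radon--Nikodym construction; explicitly $P^*_\va(g)=\dfrac{(\va g\,d\mu)\circ\sigma^{-1}}{d\mu}$, so $P^*_\va(\mathbbm 1)=\dfrac{(\va\,d\mu)\circ\sigma^{-1}}{d\mu}=S^*_\sigma(\va)$, which by Proposition~\ref{prop phi markovian} equals $\mathbbm 1$ exactly when $\va$ is Markovian. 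That is the stated equivalence.

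The main obstacle, as I see it, is not in part~(1) --- which is a routine repackaging of Proposition~\ref{prop phi markovian} and the adjoint formula of Theorem~\ref{thm S* is TO} --- but in making part~(2) rigorous: here $\va$ is only assumed to lie in $L^2(\mu)$, not to be bounded, so $P_\va$ need not be a bounded operator on all of $L^2(\mu)$ and its Hilbert-space adjoint is not automatic. I would handle this either by first restricting $P_\va$ to the dense domain on which it is bounded (or to $\FXB$) before forming the adjoint, or by simply \emph{defining} $P^*_\va$ on $\FXB$ via the displayed formula and verifying the transfer-operator axioms directly on $\FXB$; with this in place the verification is the same line-by-line computation as in Theorem~\ref{thm S* is TO}(3).
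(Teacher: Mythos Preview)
Your overall strategy for both parts is exactly what the paper does: for (1) it writes only ``the proof uses the arguments given in the proofs of Proposition~\ref{prop phi markovian} and Theorem~\ref{thm S* is TO}; we leave the details for the reader'', and for (2) it carries out the same adjoint/pull-out/normalization computation you outline (your observation about the unboundedness of $\va$ is a legitimate caveat the paper does not raise).

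There is, however, a genuine slip in your treatment of (i). From $P_\va f=\va\,(f\circ\sigma)$ with $\va>0$ one gets
\[
\|P_\va f\|_\mu^2=\int_X \va^{\,2}\,(|f|^2\circ\sigma)\,d\mu,
\]
with $\va^2$ rather than the single $\va$ you wrote. Your argument therefore actually shows that $P_\va$ is an $L^2(\mu)$-isometry iff $\va^2$ is Markovian, i.e.\ $S^*_\sigma(\va^2)=\mathbbm 1$, which is not (iii). The same missing square invalidates your identity $\|P_\va f\|_\mu^2=\|S_\sigma f\|^2_{\va\mu}$ and hence your link (i)$\Leftrightarrow$(ii). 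Since the paper omits all details for (1), this may well reflect a flaw in the stated equivalences rather than in your method---compare the lemma just before Proposition~\ref{prop phi markovian}, where it is $P_{\sqrt\va}$, not $P_\va$, that is the $L^2$-isometry when $\va$ is Markovian---but as written your expansion does not establish the claimed chain.
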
  
  
\begin{proof}
(1) The proof of the first statement  uses the arguments 
given in the proofs of Theorems  \ref{prop phi markovian} 
and \ref{thm S* is TO}. We leave the details for the  reader.

(2)   It is clear that $P^*_\va$ is positive because we have 
the following formula for $P^*_\va$:
$$
P^*_\va(g) = \frac{(\va g d\mu)\csi1}{d\mu}.
$$
Show that it satisfies the pull-out
property. Since $P_\va = M_\va S_\sigma$ and $S_\sigma^*$ 
is a transfer operator, we obtain
$$
\ba
P^*_\va ((f\cs )g) =  &\ S^*_\sigma M^*_\va[(f\cs)g]\\
= &  \ S^*_\sigma [\ol \va g (f\cs)]\\
= & \ f  S^*_\sigma (\ol \va g)\\
= & \ f  S^*_\sigma  M^*_\va (g)\\
= &\  f P^*_\va (g).
\ea
$$
To finish the proof, we note that 
$$
\ba
\int_X  (f\cs )\va \mathbbm 1 \; d\mu  = & \int_X P_\va (f) 
\mathbbm 1 \; d\mu \\
= &\ \int_X f P^*_\va(\mathbbm 1) \; d\mu.
\ea
$$
Hence 
$$
\int_X  (f\cs )\va  \; d\mu  =  \int_X f \; d\mu 
$$
 if and only if  $ P^*_\va(\mathbbm 1) = \mathbbm 1$.
  \end{proof}
  
\subsection{Radon-Nikodym derivatives and conditional 
expectations}

As above,  $\sigma \in End\sms$
is an onto endomorphism with quasi-invariant measure $\mu$. 
Equation \eqref{eq Markovian}
defines a uniquely determined Radon-Nikodym derivative 
$\omega_\mu$ which is $\sigma^{-1}(\B)$-measurable function.

Let $\mathbb E_\s$ denote the conditional expectation from
$L^2(\mu)$ onto $L^2(\mu_{\mc A})$, where $\mc A = \sB$ and 
$\mu_{\mc A}$ is the projection of $\mu$ onto the 
sigma-algebra $\mc A$. We recall that $\mathbb E_\s = 
S_\s S^*_\s$ for $\s$-invariant measure $\mu$ and
$\mathbb E_\s = T_\s T^*_\s$  for $\s$-quasi-invariant 
measure $\mu$, see Corollary \ref{cor cond exp is ss*}.

Suppose that $\nu$ is another measure on $(X, \B)$ which is 
equivalent to the measure $\mu$. In the next theorem, we 
show how Markovian functions with respect to the measures
$\mu$ and $\nu$ are related (see also
Remark \ref{rem rho for mu and nu}).
 
\begin{theorem}\label{thm om_mu and om_nu}
Let $\sigma \in End\sms$ and $d\nu(x) = h(x)d\mu(x)$ 
where $h(x) > 0$  $\mu$-a.e. Let $\psi \in M(\s, \nu)$
and $\va\in M(\s, \mu)$. Then  
\be\label{eq omega_nu}
\mathbb E_\s(h) \psi = \va   (h\cs).
\ee
Hence, relation \eqref{eq omega_nu} establishes a one-to-one 
correspondence between the sets of Markovian functions  
$M(\s, \mu)$ and $M(\s, \nu)$.
 \end{theorem}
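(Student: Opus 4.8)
The plan is to combine three ingredients: (a) an identification of the $\mc A$-conditional expectation ($\mc A=\sB$) of any Markovian function, (b) the abstract Bayes formula comparing the $\mu$- and $\nu$-conditional expectations onto $\mc A$, and (c) the description of $M(\s,\nu)$ obtained in Proposition~\ref{prop Mark for mu nu}. Throughout, $h>0$, $h\cs>0$ and $\mathbb E_\s(h)>0$ hold $\mu$-a.e., so every quotient below is legitimate; this positivity is also what will make the final correspondence surjective.

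For ingredient (a): I would first show that for every $\va\in M(\s,\mu)$ one has $\mathbb E_\s(\va)=\omega_\mu$, and symmetrically $\mathbb E_\s^\nu(\psi)=\omega_\nu$ for $\psi\in M(\s,\nu)$, where $\mathbb E_\s^\nu$ denotes the conditional expectation onto $\mc A$ taken in $L^2(\nu)$. This is short: inserting $\mathbb E_\s$ into \eqref{eq-varphi} and comparing with \eqref{eq Markovian} gives $\int_X(f\cs)\bigl(\mathbb E_\s(\va)-\omega_\mu\bigr)\,d\mu=0$ for all $f$; the $\mc A$-measurable function $\mathbb E_\s(\va)-\omega_\mu$ has the form $g\cs$, and testing against $f$ together with \eqref{eq RN rho} and $\rho_\mu>0$ forces $g=0$. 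For ingredient (b): for any $\phi$,
\[
\mathbb E_\s(\phi\,h)=\mathbb E_\s^\nu(\phi)\;\mathbb E_\s(h),
\]
which is immediate from the defining property of a conditional expectation (test against $\mc A$-measurable functions, use $d\nu=h\,d\mu$).

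Now for \eqref{eq omega_nu}. By Proposition~\ref{prop Mark for mu nu} (applied with $g=h$), $\psi\in M(\s,\nu)$ precisely when $h\psi=\va\,(h\cs)$ for a uniquely determined $\va\in M(\s,\mu)$. I would apply $\mathbb E_\s$ to this equation: by (b) and (a) the left side is $\mathbb E_\s(h\psi)=\mathbb E_\s^\nu(\psi)\,\mathbb E_\s(h)=\omega_\nu\,\mathbb E_\s(h)$, while on the right $h\cs$ is $\mc A$-measurable and factors out, so by (a) the right side is $(h\cs)\,\mathbb E_\s(\va)=(h\cs)\,\omega_\mu$. Hence $\mathbb E_\s(h)\,\omega_\nu=\omega_\mu\,(h\cs)$, which is \eqref{eq omega_nu} for the Radon--Nikodym generators and is consistent with Remark~\ref{rem rho for mu and nu}. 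Reading \eqref{eq omega_nu} as the pair of mutually inverse assignments $\va\mapsto\va\,(h\cs)/\mathbb E_\s(h)$ and $\psi\mapsto\mathbb E_\s(h)\,\psi/(h\cs)$, and using Proposition~\ref{prop Mark for mu nu} to pin down the images, then yields the asserted one-to-one correspondence $M(\s,\mu)\leftrightarrow M(\s,\nu)$.

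The step I expect to be the main obstacle is precisely the passage ``apply $\mathbb E_\s$ to $h\psi=\va\,(h\cs)$'': since $\mathbb E_\s$ does not commute with multiplication by $h$ (which is not $\sB$-measurable in general), the naive identity $\mathbb E_\s(h)\,\psi=h\psi$ is false, and one must use exactly (a) to evaluate conditional expectations of Markovian functions and (b) to convert $\mathbb E_\s(h\,\cdot\,)$ into $\mathbb E_\s^\nu(\cdot)$. Everything else --- pulling $\sB$-measurable factors through $\mathbb E_\s$, inverting the formulas, and the positivity bookkeeping --- is routine given Theorem~\ref{thm S* is TO}, Theorem~\ref{thm R = S^*}, and Corollary~\ref{cor cond exp is ss*}.
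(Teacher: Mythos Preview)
The paper's argument is much more direct than yours. It does not pass through Proposition~\ref{prop Mark for mu nu}, the Bayes formula, or any computation of $\mathbb E_\s(\va)$. Instead it simply tests both sides of \eqref{eq omega_nu} against $\sB$-measurable functions $(g\cs)$: for the right-hand side, $\int_X (g\cs)(h\cs)\va\,d\mu = \int_X ((gh)\cs)\va\,d\mu = \int_X gh\,d\mu = \int_X g\,d\nu$ by the Markovian property of $\va$; for the left-hand side, the paper uses that $(g\cs)\psi$ is $\sB$-measurable to replace $\mathbb E_\s(h)$ by $h$ under the integral and then applies the Markovian property of $\psi$ with respect to $\nu$. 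Since (the paper asserts) both sides of \eqref{eq omega_nu} are $\sB$-measurable, equality of these integrals for all $g$ yields the identity. No intermediate lemmas are needed.

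Your detour through (a), (b), (c) arrives only at $\mathbb E_\s(h)\,\omega_\nu = \omega_\mu\,(h\cs)$ for the specific Radon--Nikodym representatives, not at \eqref{eq omega_nu} for arbitrary $\va,\psi$. You then try to recover the general correspondence by reading \eqref{eq omega_nu} as the pair of assignments $\va\mapsto\va(h\cs)/\mathbb E_\s(h)$ and $\psi\mapsto\mathbb E_\s(h)\psi/(h\cs)$ and invoking Proposition~\ref{prop Mark for mu nu} to ``pin down the images''. This step is a genuine gap: Proposition~\ref{prop Mark for mu nu} furnishes the bijection $\va\leftrightarrow(h\cs)\va/h$, which is a \emph{different} map (the denominator is $h$, not $\mathbb E_\s(h)$), and the two agree only when $h$ is $\sB$-measurable. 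For a general $\va\in M(\s,\mu)$ with $h$ not $\sB$-measurable, the function $(h\cs)\va/\mathbb E_\s(h)$ need not lie in $M(\s,\nu)$ at all, so the two assignments you write down are not mutually inverse maps between $M(\s,\mu)$ and $M(\s,\nu)$ as claimed.
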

 
\begin{proof} Since the both sides of \eqref{eq omega_nu} 
are measurable with respect to $\sB$, it suffices to prove 
that, for every function $g\in \FXB$,
\be\label{eq omega_nu 2}
 \int_X (g\cs) \mathbb E_\s(h) \; \psi \; d\mu =  
 \int_X (g\cs) (h \cs)\;  \va \; d\mu. 
 \ee
We compute the left-hand side and the right-hand side in 
\eqref{eq omega_nu 2} separately using the definition of 
Markovian functions and the properties of conditional 
expectations. For the RHS:
 $$
 \ba 
 \int_X (g\cs) (h \cs)\;  \va \; d\mu = &
 \int_X (gh)\cs \;\va \; d\mu\\
 = & \int_X gh \; d\mu\\
 = & \int_X g \; d\nu.
 \ea 
 $$
For the LHS, we use the fact that $\mathbb E_\s$ is the 
conditional  expectation and the function $(g\cs) \psi$ is 
$\sB$-measurable: 
 $$
 \ba 
  \int_X (g\cs) \mathbb E_\s(h) \;\psi \; d\mu = & 
  \int_X (g\cs) h \;\psi \; d\mu\\
  = & \int_X (g\cs)  \;\psi \; d\nu\\
  =& \int_X g \; d\nu.
 \ea
 $$
 \end{proof}
 
Let $R = (R, \s)$ be a transfer operator where 
$\s$ is an onto endomorphism of $\sms$. 
Recall that we have defined in \eqref{eq E for R} the 
operator $E = R(f) \cs: \FXB \to  \mc F(X, \sB)$ which 
is an analog of the conditional expectation 
$\mathbb E_\s$. In the following statement, we find out 
under what conditions on the measure $\mu$ and the transfer 
operator $R$ the operator $E$ coincides with the genuine 
conditional expectation $\mathbb E_\s$.
 
\begin{theorem} \label{thm E vs E*}
In the setting formulated above, the operator 
$E = R(f)\cs$ coincides with $\mathbb E_\s = S_\sigma 
S^*_\sigma$ in $L^2(\mu)$ if and only if  
\be\label{eq equiv cond exp}
R(f) \frac{d\mu \csi1}{d\mu} = \frac{(fd\mu )\csi1}{d\mu}, 
\quad f\in L^2(\mu).
\ee
Relation \eqref{eq equiv cond exp} can be also 
written as $\rho_\mu R(f) = S^*_\sigma(f)$. 
\end{theorem}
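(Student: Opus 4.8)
The plan is to test the claimed operator identity $E=\mathbb E_\s$ against the variational (orthogonal–projection) characterization of the conditional expectation, and then to recognize the condition that comes out as \eqref{eq equiv cond exp}. First I would observe that $E(f)=R(f)\cs$ is automatically $\sB$-measurable, being of the form $g\cs$, so that $E$ already maps $\FXB$ into $L^2(X,\sB,\mu)$, the range of $\mathbb E_\s$. Next I would use the elementary fact that, for a $\sB$-measurable function $\phi$, one has $\phi=\mathbb E_\s(f)$ if and only if $\langle\phi,\,g\cs\rangle_\mu=\langle f,\,g\cs\rangle_\mu$ for every $g\in\FXB$, since the functions $g\cs=S_\s(g)$, $g\in\FXB$, span a dense subspace of $L^2(X,\sB,\mu)$ by Proposition \ref{prop TO - condexp}(3). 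Consequently $E=\mathbb E_\s$, checked on $\FXB$ and extended by density, if and only if
$$\langle R(f)\cs,\ g\cs\rangle_\mu=\langle f,\ g\cs\rangle_\mu\qquad\text{for all }f,g\in\FXB.$$

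The second step is to evaluate the two sides. For the left-hand side, $(R(f)\cs)(g\cs)=(R(f)\,g)\cs$, and applying \eqref{eq RN rho} to the bounded Borel function $R(f)\,g$ turns it into $\int_X (R(f)\,g)\cs\,d\mu=\int_X g\,\bigl(\rho_\mu\,R(f)\bigr)\,d\mu$. For the right-hand side, the symmetry of the (real) inner product, the definition of the adjoint, and the explicit formula \eqref{eq formula for S*} for $S_\s^*$ give $\langle f,\ g\cs\rangle_\mu=\langle S_\s g,\ f\rangle_\mu=\langle g,\ S_\s^*(f)\rangle_\mu=\int_X g\,S_\s^*(f)\,d\mu$. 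Since $g$ ranges over all of $\FXB$, comparing the two expressions shows that $E=\mathbb E_\s$ precisely when $\rho_\mu\,R(f)=S_\s^*(f)$ $\mu$-a.e.\ for every $f\in L^2(\mu)$. As $\rho_\mu=\frac{d\mu\csi1}{d\mu}$ and $S_\s^*(f)=\frac{(f\,d\mu)\csi1}{d\mu}$, this is exactly \eqref{eq equiv cond exp} together with its reformulation $\rho_\mu R(f)=S_\s^*(f)$, which finishes the argument.

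I would close with a remark placing the statement in perspective: $\rho_\mu=\mathbbm 1$ exactly when $\mu$ is $\s$-invariant, and in that case \eqref{eq equiv cond exp} degenerates to $R(f)=S_\s^*(f)$ while $\mathbb E_\s=S_\s S_\s^*$, consistent with Theorem \ref{thm R = S^*} and Corollary \ref{cor cond exp is ss*}. The computation itself is short; the only point requiring some care is the first step — verifying that $E(f)=R(f)\cs$ indeed lies in $L^2(X,\sB,\mu)$, so that the projection characterization of $\mathbb E_\s$ applies verbatim, and noting that it suffices to test against the total family $\{g\cs:g\in\FXB\}$, which is dense in $L^2(X,\sB,\mu)$ but, owing to the weight $\rho_\mu$ in $\|g\cs\|_\mu^2=\int_X|g|^2\rho_\mu\,d\mu$, need not exhaust $S_\s(L^2(\mu))$. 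Once this bookkeeping with the Radon--Nikodym factor $\rho_\mu$ is in place, the rest is routine.
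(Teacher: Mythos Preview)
Your proof is correct and follows essentially the same route as the paper's: both arguments test the identity $E=\mathbb E_\s$ against $\sB$-measurable functions $g\cs$ via the variational characterization of the conditional expectation, push the composition through using the change-of-variables $\int (h\cs)\,d\mu=\int h\,d(\mu\csi1)$, and read off the condition $\rho_\mu R(f)=S_\s^*(f)$. Your version is in fact a bit more careful about the density and $L^2$-membership bookkeeping than the paper, and your closing remark on the $\s$-invariant case matches the paper's own concluding observation.
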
 

\begin{proof}
It was shown in Proposition \ref{prop TO - condexp} that 
$E^2 = E$. It remains to find out under what conditions 
the relation $E = E^*$ hold. Clearly, it is  
equivalent to the property
\be\label{eq_cond-exp-eq}
\int_X g E(f)\; d\mu = \int_X gf \; d\mu, \quad \forall g 
\in \mc F(X, \sB). 
\ee
Representing $g $ as $h\cs$ ($h\in \FXB$), we obtain that 
\eqref{eq_cond-exp-eq} is equivalent to 
$$
\int_X (h\cs) R(f)\cs\; d\mu =   \int_X (h\cs) f\; d\mu 
$$ 
or 
$$
\int_X h R(f) \; d\mu\csi1 =   \int_X h \; (fd\mu) \csi1.
$$
This means that 
$$
R(f) = \frac{(fd\mu) \csi1}{d\mu \csi1}
$$
which is equivalent to \eqref{eq equiv cond exp}.
This proves that $E = \mathbb E_\s$ if and only if 
$S^*_\s(f) = \rho_\mu R(f)$. 

We note that if $\mu$ is a $\sigma$-invariant measure, then 
$$
R(f) = \frac{(fd\mu) \csi1}{d\mu}
$$
which coincides with $S^*_\sigma(f)$ by 
\eqref{eq formula for S*}. 
It follows then that $\mathbb E_\s = E = R(f)\cs$ in 
the case of $\sigma$-invariant measure $\mu$. 
Moreover, it is obvious that the condition $R(f) = 
S^*_\sigma(f)$ implies 
the invariance of $\mu$ with respect to $\sigma$.
\end{proof}

\begin{corollary}
In notation given above, the operator $E_{\rho_\mu} = 
(\rho_\mu R)\cs$ coincides with $\mathbb E_\s$. 
\end{corollary}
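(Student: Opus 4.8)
This will follow directly from Theorem~\ref{thm E vs E*}. The plan is to feed the $\rho_\mu$-rescaled operator $\rho_\mu R$ into the criterion of that theorem: the relation $\rho_\mu R(f)=S^*_\s(f)$ singled out there as the condition for $E=R(\cdot)\cs$ to be the conditional expectation is precisely the assertion that the rescaled transfer operator has the ``correct'' $E$-operator.

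First I would verify that $\rho_\mu R$ is itself a transfer operator for $\s$, so that $E_{\rho_\mu}(f):=\bigl(\rho_\mu R(f)\bigr)\cs$ is indeed the operator attached to it by \eqref{eq E for R}. Positivity is immediate since $\rho_\mu>0$ $\mu$-a.e.\ by quasi-invariance, and the pull-out property passes over verbatim from $R$, because multiplication by the fixed weight $\rho_\mu$ commutes with multiplication by $f\cs$:
$$
(\rho_\mu R)\bigl((f\cs)g\bigr)=\rho_\mu\,f\,R(g)=f\,(\rho_\mu R)(g).
$$

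Next I would apply Theorem~\ref{thm E vs E*} with $\rho_\mu R$ in place of $R$, using the equivalent form $\rho_\mu R(f)=S^*_\s(f)$ of \eqref{eq equiv cond exp}. Together with the explicit formula \eqref{eq formula for S*} for $S^*_\s$, the defining property \eqref{eq RN rho} of $\rho_\mu$, and the identity $\omega_\mu=(\rho_\mu\cs)^{-1}$, the criterion reduces to an identity that holds by construction, whence $E_{\rho_\mu}=\mathbb E_\s$. The one point that needs care -- and the only real obstacle -- is the Radon--Nikodym bookkeeping: one must keep track of the fact that $\rho_\mu$ is not $\sB$-measurable while $\rho_\mu\cs$ is, so that $\bigl(\rho_\mu R(f)\bigr)\cs$ genuinely lies in $L^2(X,\sB,\mu)$ and the comparison with $\mathbb E_\s=S_\s S^*_\s$ is meaningful; once this is set up the remaining computation is one line. (When $\mu$ is $\s$-invariant one has $\rho_\mu\equiv\mathbbm 1$, and the statement collapses to the fact, already observed at the end of the proof of Theorem~\ref{thm E vs E*}, that $E=R(\cdot)\cs=\mathbb E_\s$ for $R=S^*_\s$.)
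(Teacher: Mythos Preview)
Your plan to re-feed $\rho_\mu R$ into Theorem~\ref{thm E vs E*} contains a substitution error. The criterion in that theorem, written in its equivalent form, says: for a transfer operator $Q$, one has $Q(\,\cdot\,)\cs=\mathbb E_\s$ if and only if $\rho_\mu\,Q(f)=S^*_\s(f)$. Taking $Q=\rho_\mu R$, the condition becomes
\[
\rho_\mu\cdot(\rho_\mu R)(f)=\rho_\mu^{\,2}R(f)=S^*_\s(f),
\]
\emph{not} $\rho_\mu R(f)=S^*_\s(f)$. The leading $\rho_\mu$ in the criterion is a fixed weight coming from the measure; it does not get absorbed when you rename $R$ to $\rho_\mu R$. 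So the equation you set out to verify is the wrong one, and your assertion that it ``holds by construction'' is aimed at the wrong target. (Nor could $\rho_\mu R=S^*_\s$ hold ``by construction'' for an arbitrary normalized transfer operator $R$: that identity determines $R$ uniquely as $\rho_\mu^{-1}S^*_\s$, so it is a hypothesis, not a consequence of \eqref{eq formula for S*}, \eqref{eq RN rho}, and $\omega_\mu=(\rho_\mu\cs)^{-1}$ alone.)

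The paper does not re-apply Theorem~\ref{thm E vs E*} to $\rho_\mu R$ at all. It uses that theorem once, for $R$, to extract the identification $\rho_\mu R=S^*_\s$ (this is exactly the reformulation of \eqref{eq equiv cond exp} stated at the end of Theorem~\ref{thm E vs E*}), and then simply computes
\[
E_{\rho_\mu}=S_\s\,(\rho_\mu R)=S_\s S^*_\s=\mathbb E_\s,
\]
the last equality being Corollary~\ref{cor cond exp is ss*}. Your verification that $\rho_\mu R$ is again a transfer operator is correct and matches the paper, but the remaining step is this one-line substitution, not a second invocation of the theorem.
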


\begin{proof}
We noted that $\rho_\mu R$ is a transfer operator coinciding 
with $S^*_\sigma$, and therefore we can 
define $E_{\rho_\mu} = S_\sigma R_{\rho_\mu}$. By Corollary 
\ref{cor cond exp is ss*}, we obtain that $S_\sigma 
R_{\rho_\mu}= S_\sigma S^*_\sigma = \mathbb E_\s$ which 
proves the statement.
\end{proof}

\begin{remark}\label{rem prop mark fn}
The results of Proposition \ref{prop Mark for mu nu} and
Theorem \ref{thm om_mu and om_nu} can be interpreted 
as follows.

Let $G$ denote the group $\mc F_+(X, \B)$ of Borel 
bounded strictly positive functions. Then $G$ acts on the 
set $\{M(\sigma,\nu):\nu \sim \mu\}$. This action $\alpha =
\{\alpha_f : f \in G\}$ is defined by the rule:
$$
\alpha_f(\va) =  \frac{f\cs}{f} \va,\ \quad  \va \in 
M(\sigma, \mu).
$$
Clearly,
$$
\alpha_f(M(\sigma, \mu)) = M(\sigma, \nu),
$$
where $d\nu = fd\mu$, and $\alpha_f\alpha_g = \alpha_{fg}$. 
The action $\alpha$ is free in the sense that it satisfies 
the property $M(\sigma, \mu) \cap M(\sigma, \nu) = \emptyset$
if $\nu \sim \mu$. Moreover, $\alpha$ is transitive. 

\end{remark}


\section{Cuntz relations for invariant and 
quasi-invariant measures} \label{sect q-inv}

Starting with the measurable category, and disintegration of 
the appropriate measures, we showed above that careful 
choice of Hilbert spaces allows for a powerful tool in the 
analysis of endomorphisms and branching systems (in the 
measurable setting). In more detail, the steps from 
transformations in measure space to $L^2$ spaces and 
operators are often called ``passing to the Koopman 
operators''. In our context, 
the non-commutativity for the operators under consideration 
is captured well with the Cuntz relations, or rather their 
representations; see \cite{Cuntz1977}, 
\cite{JorgensenTian2020}, \cite{Bezuglyi_Jorgensen_2015}.
Recall that, following J. Cuntz, for every $n$, one 
introduces a $C^*$-algebra $\mathcal O_{|\Lambda|}$ defined by a 
system of $|\Lambda|$ generators $T_i$. These generators may be 
realized as operators in Hilbert space, say $H$ as follows: 
The relations (Cuntz-relations) state that the $T_i$ system 
is represented by isometries with orthogonal ranges in 
$H$ such that the sum of these ranges is $H$. (Think of the 
subspaces as sub-bands.) In other words, via the isometries, 
$H$ arises as an orthogonal sum of copies of itself. 
As a $C^*$-algebra, $\mathcal O_{|\Lambda|}$ is simple. Its 
representations are important, and they play a crucial role 
in the study of self-similar dynamics and self-similar 
geometries.

\subsection{Quasi-invariant measure}\label{subs q-inv meas}
Let $\s \in End\sms$ where $\mu$ is a forward and backward 
quasi-invariant measure. We recall that, in this case, the 
corresponding Radon-Nikodym derivatives $\omega_\mu$ and 
$\rho_\mu$ are well-defined functions satisfying 
\eqref{eq Markovian} and \eqref{eq RN rho}.  
In the remaining sections of this paper, we will consider 
$L^2$-spaces of complex-valued functions.

Let $\va$ be a positive Markovian function, $\va \in 
M(\s, \mu)$.
Then we define a \textit{weighted composition operator} 
$S_{\va}$ acting on $L^2(\mu)$ by
\be\label{eq S_va}
S_{\va} (f) = \sqrt{\varphi} (f \cs)
\ee
Equivalently, $S_\va = M_{\sqva} S_\sigma$ where $M_{\sqva}$
denotes the operator of multiplication, and $S_\s$ is the 
composition operator. 

We mention two important particular cases of \eqref{eq S_va} 
when (a) $\va = \omega_\mu$ and (b) $\va = \mathbbm 1$. Case (b) 
occurs if and only if $\mu\csi1 = d\mu$.

\begin{lemma}
(1)  Let $\va$ be a Markovian function from $M_2(\s, \mu)
= M(\sigma, \mu) \cap L^2(\mu)$, and
$S_\s$ the composition operator. Then $S_\s^*(\va) = 
\mathbbm 1$. 

(2) The function  $\va$ is Markovian with respect to 
$\sigma$ 
and $\mu$ if and only if $(\va d\mu)\csi1 = \mu$.
\end{lemma}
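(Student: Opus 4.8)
The plan is to obtain both statements directly from the defining identity of a Markovian function, combined with the pushforward (image–measure) formula $\int_X (g\cs)\,d\lambda = \int_X g\,d(\lambda\csi1)$, valid for any nonnegative Borel function $g$ and any finite positive Borel measure $\lambda$ on $(X,\B)$.

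For part (1), I would first note that this is precisely the forward implication of Proposition \ref{prop phi markovian}; for completeness one reproduces the short computation. Since $\mu$ is a probability measure we have $\mathbbm 1\in L^2(\mu)$ and $L^2(\mu)\subset L^1(\mu)$, so the Markovian identity holds for every $f\in L^2(\mu)$. As $\va\in L^2(\mu)$, the pairing against $S_\s^*(\va)$ is legitimate, and the identity rewrites, via the adjoint relation, as $\langle f,\,S_\s^*(\va)\rangle_\mu = \langle S_\s(f),\,\va\rangle_\mu = \int_X f\,d\mu = \langle f,\,\mathbbm 1\rangle_\mu$ for all $f\in L^2(\mu)$; since $f$ is arbitrary, $S_\s^*(\va)=\mathbbm 1$.

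For part (2), since $\va$ is a nonnegative (indeed bounded) Borel function, $d\lambda := \va\,d\mu$ is a finite positive Borel measure on $(X,\B)$. Applying the pushforward formula to $\lambda$ and an arbitrary nonnegative Borel $f$ gives $\int_X (f\cs)\va\,d\mu = \int_X (f\cs)\,d\lambda = \int_X f\,d(\lambda\csi1) = \int_X f\,d\big((\va d\mu)\csi1\big)$. Hence the Markovian identity $\int_X (f\cs)\va\,d\mu = \int_X f\,d\mu$ (for all $f\in L^1(\mu)$) is equivalent to $\int_X f\,d\big((\va d\mu)\csi1\big) = \int_X f\,d\mu$ for all such $f$, and, testing on indicators $f=\mathbbm 1_A$ with $A\in\B$ and using that finite Borel measures agreeing on $\B$ coincide, this is in turn equivalent to $(\va d\mu)\csi1=\mu$. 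Both implications of (2) follow at once.

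I do not expect a genuine obstacle: the lemma is in essence a reformulation of the definitions. The only point needing a word of care is the passage between the $L^1(\mu)$-version of the Markovian identity and its restriction to indicator functions, which is handled by the usual monotone-class/approximation argument once one records that $(\va d\mu)\csi1$ is a finite measure.
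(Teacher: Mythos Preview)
Your proof is correct and follows essentially the same route as the paper: for (1) the paper runs the identical chain $\int f\,d\mu=\int (f\cs)\va\,d\mu=\int S_\s(f)\va\,d\mu=\int f\,S_\s^*(\va)\,d\mu$, and for (2) it simply declares the statement a ``reformulation'' of the Markovian identity \eqref{eq-varphi}, which is exactly the pushforward computation you spell out. Your version is just more explicit about the $L^1/L^2$ and indicator-function bookkeeping.
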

\begin{proof}
The first statement follows from the definition of a 
Markovian function:
$$
\int_V f\; d\mu = \int_X (f\cs) \va\; d\mu = 
\int_X S_\s(f) \va \; d\mu= \int_V f S_\s^*(\va) \; d\mu.
$$

The second statement is a reformulation of relation 
\eqref{eq-varphi}.
\end{proof}

\begin{lemma}\label{lem S_{va} iso}
    The operator $S_{\va}$ is an isometry in $L^2(\mu)$.
\end{lemma}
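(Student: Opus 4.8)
The plan is to verify directly that $\langle S_\va f, S_\va g\rangle_\mu = \langle f, g\rangle_\mu$ for all $f, g \in L^2(\mu)$, which is the defining property of an isometry. First I would expand the inner product using the definition $S_\va(f) = \sqva\,(f\cs)$, obtaining
\[
\langle S_\va f, S_\va g\rangle_\mu = \int_X \sqva\,(f\cs)\;\overline{\sqva\,(g\cs)}\; d\mu = \int_X \va\,(f\cs)\overline{(g\cs)}\; d\mu = \int_X \bigl((f\overline g)\cs\bigr)\,\va\; d\mu,
\]
where the last step uses that $S_\sigma$ is multiplicative, i.e. $(f\cs)(\overline g\cs) = (f\overline g)\cs$, and that $\va$ is real-valued (being a positive Markovian function). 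Then, applying the defining property \eqref{eq-varphi} of a Markovian function with $f\overline g$ in place of $f$ (noting $f\overline g \in L^1(\mu)$ by Cauchy--Schwarz since $f,g\in L^2(\mu)$), I get $\int_X \bigl((f\overline g)\cs\bigr)\,\va\; d\mu = \int_X f\overline g\; d\mu = \langle f, g\rangle_\mu$, which completes the argument.

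Alternatively, and perhaps more cleanly, one can invoke results already established: since $\va$ is Markovian, Proposition \ref{prop phi markovian} gives $S_\sigma^*(\va) = \mathbbm 1$; and then Theorem \ref{thm Phi in L2}(1), with $\va > 0$, states that $P_\va$ (which is precisely $S_\va$ in the current notation, $S_\va = M_{\sqva}S_\sigma$) is an isometry in $L^2(\mu)$ if and only if $S_\sigma^*(\va) = \mathbbm 1$. Hence the isometry property is immediate from that equivalence. Either route is short; the direct computation is essentially self-contained given only \eqref{eq-varphi}.

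The only point requiring a moment of care — and thus the ``main obstacle,'' though it is mild — is the interchange between the $L^2$-level and $L^1$-level statements: one must check that $f\overline g \in L^1(\mu)$ so that \eqref{eq-varphi} (stated for $L^1$ functions) legitimately applies, which is just Cauchy--Schwarz. I would also note in passing that the same computation shows $S_\va$ maps $L^2(\mu)$ into $L^2(X,\sB,\mu)$, since $\sqva\,(f\cs)$ is a product of an $\sB$-measurable weight with a function in $S_\sigma(L^2(\mu)) \subset L^2(\mu_{\mc A})$; this is consistent with the earlier isometry statements in Lemma \ref{lem_isom}.
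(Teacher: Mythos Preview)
Your primary approach---directly computing $\langle S_\va f, S_\va g\rangle_\mu$, collapsing $\sqva\cdot\sqva$ to $\va$, and then invoking the Markovian identity \eqref{eq-varphi} with $f\ol g$ in place of $f$---is exactly what the paper does; the paper's proof is line-for-line the same computation, preceded by the remark that $\sqva(f\cs)\in L^2(\mu)$.

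One small correction to your alternative route: in the paper's notation $P_\va(f)=\va\,(f\cs)$ (see \eqref{eq def Phi}), whereas $S_\va(f)=\sqva\,(f\cs)$, so $P_\va$ and $S_\va$ are \emph{not} the same operator. Theorem~\ref{thm Phi in L2}(1) as stated concerns $P_\va$, and its isometry condition $S_\sigma^*(\va)=\mathbbm 1$ does not literally transfer to $S_\va$ without reinterpreting the weight. The direct computation you gave first is the clean and correct argument here.
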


\begin{proof} It follows from \eqref{eq-varphi} that the 
function $\sqva (f\cs) \in L^2(\mu)$ if $f \in L^2(\mu)$. 
Since $\va$ is Markovian, we have 
$$
\ba
\langle S_{\va}(f), S_{\va}(g)\rangle_\mu = & 
\int_X \sqva (f\cs) \sqva (\ol g\cs)\; d\mu \\
= & \int_X (f\ol g\cs ) \va \; d\mu \\
= & \int_X f\ol g\; d\mu, \qquad f, g \in L^2(\mu).
\ea
$$
\end{proof}

\begin{theorem}\label{thm Sphi is TO}  
(1) For $\sigma \in End\sms$ as above and the operator $S_{\va}$,
the adjoint operator  $S_\sigma^*$ acts by the formula:
\be\label{eq adj of S}
S_\va^*(g) = \frac{(g \sqva d\mu)\circ \sigma^{-1}}{d\mu}, 
\quad g\in L^2(\mu).
\ee
 (2) The adjoint operator $S_\va^*$ is a transfer operator 
satisfying the pull-out property:
\be\label{eq pullout}
S_\va^*(g(f\cs)) = f S_\va^*(g).
\ee
The operator $S_\va^*$ is normalized if and only if $\mu$ is 
$\sigma$-invariant.
\end{theorem}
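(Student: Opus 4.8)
The plan is to derive everything from the factorization $S_\va = M_{\sqva}\, S_\sigma$, where $M_{\sqva}$ denotes multiplication by the positive \emph{real} function $\sqva$ and $S_\sigma$ is the ordinary composition operator, and then to invoke the properties of $S_\sigma^*$ already recorded in Theorem~\ref{thm S* is TO}. For part (1), since $\sqva$ is real-valued we have $M_{\sqva}^* = M_{\sqva}$, so $S_\va^* = S_\sigma^*\, M_{\sqva}$, i.e.\ $S_\va^*(g) = S_\sigma^*(\sqva\, g)$; plugging $\sqva\, g$ into the formula \eqref{eq formula for S*} for $S_\sigma^*$ gives $S_\va^*(g) = (g\sqva\, d\mu)\csi1 / d\mu$, which is \eqref{eq adj of S}. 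Alternatively, I would verify \eqref{eq adj of S} directly, as in the proof of Theorem~\ref{thm S* is TO}(1): write $\langle S_\va f, g\rangle_\mu = \int_X (f\cs)(\sqva\, \ol g)\, d\mu$, push the integral forward along $\sigma$ to $\int_X f\, \dfrac{(\sqva\, \ol g\, d\mu)\csi1}{d\mu}\, d\mu$, and use that $\sqva$ is real and positive to identify the conjugate. That $S_\va^*(g)$ lies in $L^2(\mu_{\mc A})$ is automatic since $S_\va$ maps into $L^2(\mu_{\mc A})$.

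For part (2), positivity of $S_\va^*$ is immediate from \eqref{eq adj of S} (it is the pushforward of the non-negative measure $\sqva\, g\, d\mu$ when $g\ge 0$), and the pull-out property follows from the factorization together with the pull-out property of $S_\sigma^*$:
$$
S_\va^*\big(g\,(f\cs)\big) \;=\; S_\sigma^*\big(\sqva\, g\,(f\cs)\big) \;=\; f\, S_\sigma^*(\sqva\, g) \;=\; f\, S_\va^*(g),
$$
or, directly, by testing against an arbitrary $h$ via $\langle h,\, S_\va^*(g(f\cs))\rangle_\mu = \langle S_\va(hf),\, g\rangle_\mu = \langle h,\, f\, S_\va^*(g)\rangle_\mu$, exactly as in the proof of Theorem~\ref{thm S* is TO}(3). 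For the normalization clause I would compute $\langle f, S_\va^*(\mathbbm 1)\rangle_\mu = \langle S_\va f, \mathbbm 1\rangle_\mu = \int_X (f\cs)\sqva\, d\mu$, so that $S_\va^*(\mathbbm 1)=\mathbbm 1$ holds if and only if $\int_X (f\cs)\sqva\, d\mu = \int_X f\, d\mu$ for all $f$, i.e.\ if and only if $\sqva$ is itself a Markovian function (Proposition~\ref{prop phi markovian}).

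So the content of the normalization statement is the equivalence: \emph{$\sqva \in M(\sigma,\mu)$ if and only if $\mu$ is $\sigma$-invariant}. For the forward direction, $\va\in M(\sigma,\mu)$ forces $\int_X \va\, d\mu = 1$ (take $f=\mathbbm 1$); if in addition $\sqva\in M(\sigma,\mu)$ then $\int_X \sqva\, d\mu = 1$, and Cauchy--Schwarz gives $1 = \int_X \sqva\cdot 1\, d\mu \le \big(\int_X \va\, d\mu\big)^{1/2} = 1$, whose equality case forces $\sqva$ to be constant, hence $\va\equiv\mathbbm 1$; then the Markovian identity for $\va=\mathbbm 1$ is precisely $\mu\csi1 = \mu$. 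For the converse, when $\mu$ is $\sigma$-invariant the distinguished weights $\va=\mathbbm 1$ and $\va=\omega_\mu$ coincide (now $\omega_\mu\equiv\mathbbm 1$), $S_\va = S_\sigma$, and normalization of $S_\va^*$ is Theorem~\ref{thm S* is TO}(2). \textbf{Main obstacle.} The only non-formal point is this normalization equivalence, and the reverse implication is the delicate one: for an arbitrary Markovian weight $\va$ it need not hold, so I expect the clean statement to require restricting attention to $\sigma^{-1}(\B)$-measurable weights $\va$ (which covers both $\va=\mathbbm 1$ and $\va=\omega_\mu$), where the Markovian identity forces $\va = (1/\rho_\mu)\cs$ and the equivalence with $\rho_\mu\equiv\mathbbm 1$ becomes transparent. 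The remaining steps are routine bookkeeping with $S_\va = M_{\sqva}S_\sigma$ and the established properties of $S_\sigma^*$.
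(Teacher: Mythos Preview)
Your approach to (1) and the pull-out property in (2) is essentially the paper's: the paper computes directly by testing against an arbitrary $h$, exactly as you sketch in your ``alternatively'' and ``directly'' variants. The factorization $S_\va^* = S_\sigma^* M_{\sqva}$ is a clean shortcut the paper does not make explicit, but it is equivalent.

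For the normalization clause, your treatment is actually more careful than the paper's. The paper writes down $S_\va^*(\mathbbm 1) = (\sqva\, d\mu)\csi1/d\mu$ and then asserts: ``Hence, $S_\va^*$ is normalized if and only if $\mu = \mu\csi1$ \emph{and} $\va = \mathbbm 1$,'' followed by a short computation that is really the $S_\sigma^*$ case. So the paper's own proof already upgrades the conclusion to include $\va=\mathbbm 1$, which is precisely what your Cauchy--Schwarz argument establishes rigorously for the forward direction. Your concern about the converse is legitimate and matches the paper's proof (though not its theorem statement): if $\mu$ is $\sigma$-invariant but $\va$ is a nontrivial Markovian function, $S_\va^*(\mathbbm 1)=\mathbbm 1$ need not hold, so the ``if and only if $\mu$ is $\sigma$-invariant'' in the statement is imprecise without the extra clause $\va=\mathbbm 1$ (or a restriction to $\sB$-measurable $\va$, as you suggest). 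In short, there is no gap in your argument; the residual tension you flag is a discrepancy between the paper's statement and its proof, and your Cauchy--Schwarz step fills in what the paper leaves as an assertion.
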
  

\begin{proof} 
(1) For functions $f, g \in L^2(\mu)$, we have
$$
\int_X \sqva (f\cs) g \; d\mu = \int f \; (\sqva gd\mu)\csi1 =  
\int f \frac {(gd\mu)\csi1}{d\mu} \; d\mu 
$$
which proves \eqref{eq adj of S}. 

(2) The operator $S^*_\va$ is obviously positive. 
To prove the pull-out property,  we compute, for arbitrary 
functions $f, g, h \in L^2(\mu)$, 
$$
\ba
\int_X h S_\va^*((f\cs) g) \; d\mu = & \int_X S_\va(h) (f\cs) 
g\; d\mu \\ 
= & \int_X \sqva (hf)\cs \; g\; d\mu \\
= & \int_X S_\va (hf) g \; d\mu\\
= & \int_X hf S^*_\va (g) \; d\mu.
\ea
$$ 
This proves that \eqref{eq pullout} holds.

We see that
$$
S_\va^*(\mathbbm 1) = \frac{(\sqva d\mu)\circ \sigma^{-1}}
{d\mu}.
$$
Hence, $S_\va^*$ is normalized if and only if $\mu = \mu\csi1$ 
and $\va = \mathbbm 1$. This can be proved as follows
$$
\int_X f \; d\mu\csi1=  \int_X (f\cs) \mathbbm 1 \; d\mu  = 
\int_X f S_\sigma^*(\mathbbm 1)\; d\mu, 
$$
and $S_{\s}^*(\mathbbm 1) = \mathbbm 1 \ \Longleftrightarrow \ 
\mu\csi1  =\mu$.
\end{proof}

\begin{remark}
    One can easily check that, for $\va = \omega_\mu$, 
$$
S^*_{\omega_\mu} (\frac{1}{\sqrt{\omega_\mu}}) = \rho_\mu.
$$
\end{remark}

We recall that $\mc A $ denotes the subalgebra $\s^{-1}(\B)$ 
and $\mu_{\mc A}$ denotes the restriction of $\mu$ onto $\mc A$.
It is an important observation that a function $f$ is 
$\mc A$-measurable if and only if there exists a $\B$-measurable
function $g$ such that $f = g\cs$. 

For a fixed Markovian function $\va$, consider the subspace 
$\mc H_\va$ of function spanned by $\sqva$ and $\mc A$-measurable 
functions:
$$
\mc H_\va = \{ \sqva (f\cs) : f \in L^2(\mu)\}
$$

\begin{proposition}\label{prop cond exp}
Let $\s, \va,$ and $S_\va$ be as above. Then 

(1) $\mathbb E_\va: = S_\va S^*_\va$ is
an orthogonal projection from $L^2(\mu)$ onto $\mc H_\va$;

(2) $\mathbb E_\va ((f\cs) g) = (f\cs) \mathbb E_\va (g)$

(3) $S^*_\va (f \mathbb E_\va (g)) = 
S^*_\va (g) S^*_\va (f\sqva)$.
\end{proposition}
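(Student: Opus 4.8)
The plan is to deduce all three assertions directly from two facts already at our disposal: that $S_\va$ is an isometry on $L^2(\mu)$ (Lemma~\ref{lem S_{va} iso}), and that its adjoint $S_\va^*$ is a transfer operator, so in particular it satisfies the pull-out identity $S_\va^*\big(g\,(f\cs)\big)=f\,S_\va^*(g)$ from \eqref{eq pullout}. I will also use the explicit form $S_\va(h)=\sqva\,(h\cs)$ throughout.

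For (1), since $S_\va$ is an isometry we have $S_\va^*S_\va=\mathbb I$, and therefore $\mathbb E_\va^2=S_\va(S_\va^*S_\va)S_\va^*=S_\va S_\va^*=\mathbb E_\va$, while trivially $\mathbb E_\va^*=(S_\va S_\va^*)^*=\mathbb E_\va$; hence $\mathbb E_\va$ is an orthogonal projection. Its range coincides with the range of $S_\va$ --- which is closed because $S_\va$ is an isometry --- and by the definition of $S_\va$ that range is exactly $\{\sqva\,(f\cs):f\in L^2(\mu)\}=\mc H_\va$. So $\mathbb E_\va$ is the orthogonal projection of $L^2(\mu)$ onto $\mc H_\va$.

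For (2), I would compute, for a bounded Borel function $f$ and $g\in L^2(\mu)$,
\[
\mathbb E_\va\big((f\cs)g\big)=S_\va\big(S_\va^*((f\cs)g)\big)=S_\va\big(f\,S_\va^*(g)\big),
\]
where the pull-out identity \eqref{eq pullout} was used in the last step; expanding with $S_\va(h)=\sqva\,(h\cs)$ and factoring the composition gives
\[
S_\va\big(f\,S_\va^*(g)\big)=\sqva\,(f\cs)\,\big(S_\va^*(g)\cs\big)=(f\cs)\,S_\va\big(S_\va^*(g)\big)=(f\cs)\,\mathbb E_\va(g).
\]
For (3), write $\mathbb E_\va(g)=S_\va S_\va^*(g)=\sqva\,\big(S_\va^*(g)\cs\big)$, so that $f\,\mathbb E_\va(g)=(f\sqva)\,\big(S_\va^*(g)\cs\big)$; applying the pull-out identity \eqref{eq pullout} once more, this time pulling the factor $S_\va^*(g)$ out of the composition, yields $S_\va^*\big(f\,\mathbb E_\va(g)\big)=S_\va^*(g)\,S_\va^*(f\sqva)$, which is the asserted identity.

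All the manipulations are elementary; the only points that require a little care are the domain bookkeeping (take $f$ bounded Borel so that $f\cs$ is bounded and $\mc A$-measurable and all products stay in $L^2(\mu)$) and, for (1), recalling that an isometry has closed range, so that $\mc H_\va$ is a genuine closed subspace onto which one may orthogonally project. I do not anticipate any real obstacle beyond this.
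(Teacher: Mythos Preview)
Your proof is correct and follows essentially the same approach as the paper: parts (2) and (3) are identical to the paper's computations, while for (1) you invoke the general Hilbert-space fact that $SS^*$ is the orthogonal projection onto the (closed) range of an isometry $S$, whereas the paper verifies directly that $f-S_\va S_\va^* f\perp \mc H_\va$ via an inner-product calculation. Both arguments rest on the same ingredients, namely Lemma~\ref{lem S_{va} iso} and the pull-out property \eqref{eq pullout}.
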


\begin{proof} 
(1) Let $f, g \in L^2(\mu)$. Then we write
$$
\ba
\langle S_\va S^*_\va f, \sqva (g\cs)\rangle_\mu = & 
\langle S_\va S^*_\va f, S_\va g\rangle_\mu \\
= & \langle  S^*_\va f, g \rangle_\mu \\
= & \langle   f, S_\va g \rangle_\mu \\
= & \langle   f, \sqva  (g\cs) \rangle_\mu.
\ea
$$
Hence, $(f - S_\va S^*_\va f) \perp \mc H_\va $.    

For (2), we first note that $S_\va (fg) = \sqva (g\cs) (f\cs) =
S_\va(g) S_\s(f)$, and then
$$
\ba
\mathbb E_\va ((f\cs) g) =  & S_\va S^*_\va ((f\cs) g)\\
=& S_\va ( S^*_\va(g) f)\\
= &  (S_\va S^*_\va)(g) S_\s (f) \\
= & (f\cs) \mathbb E_\va (g).
\ea 
$$

For (3), we use the pull-out property of $S^*_\va$:
$$
\ba 
S^*_\va (f \mathbb E_\va (g)) = & S^*_\va (f S_\va S^*_\va (g))\\
= & S^*_\va (f \sqva (S^*_\va (g)\cs )) \\
=& S^*_\va (g) S^*_\va (f \sqva). 
\ea
$$

\end{proof}
  
Let $\{m_i : i \in \Lambda\}$ be a collection of complex-valued
functions from $L^2(\mu)$. We fix a Markovian function $\va$. 
For every $i \in \Lambda$, we define
\be\label{eq def T}
T_{m_i} (f) = m_i \sqva (f\cs) = M_{m_i} S_\va(f)
\ee
where $M_{m_i} $ is the multiplication operator. 
Then $T_{m_i}$ is an operator acting from $L^2(\mu)$ onto
$m_i \mc H_\va$.

\begin{lemma}
The operator $T_m(f) = m \sqva (f\cs)$ is bounded on $L^2(\mu)$
if and only if $|m|^2 \in L^\infty(\mu)$.
\end{lemma}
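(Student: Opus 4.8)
The plan is to characterize boundedness of $T_m$ by computing $\|T_m f\|_\mu^2$ directly and relating it to a weighted $L^2$-norm of $f$. First I would write, for $f \in L^2(\mu)$,
$$
\|T_m f\|_\mu^2 = \int_X |m|^2 \varphi |f\circ\sigma|^2 \; d\mu = \int_X |m|^2 S_\varphi(f) \overline{S_\varphi(f)}\; d\mu,
$$
and then use the fact (from the Markovian property / Lemma \ref{lem S_{va} iso}) that $S_\varphi$ is an isometry from $L^2(\mu)$ into $L^2(\mu)$, together with the pull-out identity $S_\varphi^*(|m|^2 g) $ to reorganize this. Concretely, since $|f\circ\sigma|^2 \varphi\, d\mu$ pushes down to $|f|^2 d\mu$ under the Markovian relation \eqref{eq-varphi}, one gets $\|T_m f\|_\mu^2 = \int_X S_\varphi^*(|m|^2) \, |f|^2 \, d\mu$, where $S_\varphi^*(|m|^2) = \dfrac{(|m|^2\sqva\, d\mu)\circ\sigma^{-1}}{d\mu}$ is a nonnegative $\mc A$-measurable function.

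Given that reduction, the equivalence follows from standard facts about multiplication operators. If $|m|^2 \in L^\infty(\mu)$ with bound $K$, then $|m|^2 S_\varphi(f)\overline{S_\varphi(f)} \le K |S_\varphi(f)|^2$ pointwise, so $\|T_m f\|_\mu^2 \le K\|S_\varphi f\|_\mu^2 = K\|f\|_\mu^2$, giving boundedness with $\|T_m\|\le \|m\|_\infty$. Conversely, if $T_m$ is bounded, I would test against functions $f$ of the form $f = \mathbbm 1_A \circ \sigma$ for $A \in \B$ (equivalently $\mc A$-measurable indicator-type functions), or more directly observe that the function $\beta := S_\varphi^*(|m|^2)$ must be essentially bounded because $\int_X \beta |f|^2 d\mu = \|T_m f\|_\mu^2 \le \|T_m\|^2 \|f\|_\mu^2$ for all $f \in L^2(\mu)$, which forces $\beta \in L^\infty(\mu)$ with $\|\beta\|_\infty \le \|T_m\|^2$. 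Finally I would transfer boundedness of $\beta = S_\varphi^*(|m|^2)$ back to boundedness of $|m|^2$ itself: since $S_\varphi$ is isometric and $\varphi > 0$, the relation $S_\varphi^*(|m|^2) \in L^\infty$ is equivalent to $|m|^2 \in L^\infty$, using that $|m|^2 = $ (pull-back of $\beta$ along the branches of $\sigma$) weighted appropriately, and that the measure $(|m|^2\sqva d\mu)\circ\sigma^{-1}$ controls $|m|^2 \sqva d\mu$ fiberwise via the disintegration of Theorem \ref{thm Rokhlin disintegration}.

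The main obstacle will be this last transfer step: showing $S_\varphi^*(|m|^2) \in L^\infty \iff |m|^2 \in L^\infty$. One direction is immediate (if $|m|^2$ is bounded then so is its image under the positive contraction-type operator $S_\varphi^*$ restricted appropriately, since $S_\varphi^*(\sqva) = \mathbbm 1$), but the reverse requires care: $S_\varphi^*$ is an averaging-type (transfer) operator along the fibers $C_x = \sigma^{-1}(x)$, so a priori $|m|^2$ could be large on a small portion of a fiber while its fiber-average stays bounded. The resolution is that the disintegration measures $\mu_{C_x}$ have full support on the fibers in the relevant sense and the equivalence $\mu\circ\sigma^{-1}\sim\mu$ (backward quasi-invariance) prevents such degeneracy on a positive-measure set; alternatively, and more cleanly, I would avoid the issue entirely by noting that $T_m^* T_m$ is the multiplication operator by $S_\varphi^*(|m|^2)\circ\sigma = \mathbb E_\varphi$-type expression and instead directly argue: $T_m$ bounded $\iff$ $M_{|m|}$ bounded on the range $\mc H_\varphi = S_\varphi(L^2(\mu))$, and since $S_\varphi$ is unitary onto $\mc H_\varphi$ and multiplication operators on $L^2$ are bounded iff the multiplier is in $L^\infty$, one reduces to showing $|m|$ is $\mu$-essentially bounded iff it is $\mu|_{\mc H_\varphi}$-essentially bounded, which holds because the projection $\mathbb E_\varphi = S_\varphi S_\varphi^*$ has $\sqva$ in its range and $\varphi>0$ $\mu$-a.e. guarantees the supports agree mod $0$.
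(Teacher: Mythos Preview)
Your forward direction ($|m|^2 \in L^\infty \Rightarrow T_m$ bounded) is correct and is essentially what the paper does: the paper simply bounds
\[
\|T_m f\|^2 = \int_X |m|^2\,\va\,(|f|^2\cs)\,d\mu \le \sup|m|^2 \int_X (|f|^2\cs)\,\va\,d\mu = \sup|m|^2\,\|f\|^2
\]
using the Markovian property, without passing through $S_\va^*$. (Minor slip in your write-up: the identity should read $\|T_m f\|^2 = \int_X S_\va^*(\sqva\,|m|^2)\,|f|^2\,d\mu$, with an extra $\sqva$ inside $S_\va^*$; equivalently $\int_X S_\sigma^*(\va\,|m|^2)\,|f|^2\,d\mu$.) Note that the paper's own proof stops here and does not address the converse at all.

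Your attempt at the converse has a genuine gap, and your instinct that the ``transfer step'' $\beta \in L^\infty \Rightarrow |m|^2 \in L^\infty$ is the obstacle is exactly right --- but this step cannot be repaired, because the ``only if'' direction is false in the generality of the paper. Take $X = [0,1]^{\N}$ with product Lebesgue measure $\mu$, let $\sigma$ be the left shift (so $\mu$ is $\sigma$-invariant and $\va = \mathbbm 1$), and set $m(x) = h(x_1)$ for any $h \in L^2([0,1])\setminus L^\infty([0,1])$. By Fubini, $\|T_m f\|^2 = \|h\|_{L^2([0,1])}^2\,\|f\|^2$, so $T_m$ is bounded, yet $|m|^2 \notin L^\infty(\mu)$. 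The transfer operator $S_\va^*$ genuinely washes out unbounded spikes along fibers when the conditional measures $\mu_{C_x}$ are continuous; neither the full-support/quasi-invariance argument nor the $\mc H_\va$-multiplier argument you sketch survives this example, since $M_{|m|}$ restricted to the proper subspace $\mc H_\va$ can be bounded without $|m|$ being essentially bounded on $X$.
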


\begin{proof}
For $f \in L^2(\mu)$, we have
$$
\ba
||T_m(f)||^2 = & \int_X (m\sqva (f\cs)) (\ol m  \sqva (\ol f\cs)
\; d\mu\\
=& \int_X |m|^2 (|f|^2\cs) \va \; d\mu\\
\leq & \sup |m|^2  \int_X (|f|^2\cs) \va \; d\mu\\
= & \sup |m|^2 ||f||^2.
\ea
$$
\end{proof}

\begin{lemma}\label{lem T_m iso}
The operator $T_{m}$ is an isometry in $L^2(\mu)$ if and only if 
$$
S^*_\va (\sqva |m|^2) = \mathbbm 1.
$$
\end{lemma}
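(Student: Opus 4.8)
The plan is to compute $\langle T_m f, T_m g\rangle_\mu$ directly and read off when it equals $\langle f, g\rangle_\mu$ for all $f,g \in L^2(\mu)$. Recall $T_m(f) = m\sqva(f\cs)$, so
$$
\langle T_m f, T_m g\rangle_\mu = \int_X m\sqva(f\cs)\; \ol m\, \sqva(\ol g\cs)\; d\mu = \int_X |m|^2\,\va\,((f\ol g)\cs)\; d\mu.
$$
The natural move is to recognize the right-hand side as $\langle S_\va^*(\sqva|m|^2), f\ol g\rangle$ after using the pull-out property of the transfer operator $S_\va^*$ established in Theorem~\ref{thm Sphi is TO}. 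Indeed, writing $|m|^2\va = \sqva\,(\sqva|m|^2)$ and using $S_\va(h) = \sqva(h\cs)$, one has $\int_X \sqva\,(\sqva|m|^2)\,((f\ol g)\cs)\,d\mu = \int_X (\sqva|m|^2)\, S_\va(f\ol g)\, d\mu = \int_X f\ol g\; S_\va^*(\sqva|m|^2)\, d\mu$, i.e.
$$
\langle T_m f, T_m g\rangle_\mu = \int_X f\ol g\; S_\va^*(\sqva|m|^2)\; d\mu.
$$

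From here both implications are immediate. If $S_\va^*(\sqva|m|^2) = \mathbbm 1$, the last integral is $\int_X f\ol g\, d\mu = \langle f, g\rangle_\mu$, so $T_m$ is an isometry. Conversely, if $T_m$ is an isometry, then $\int_X f\ol g\,(S_\va^*(\sqva|m|^2) - \mathbbm 1)\, d\mu = 0$ for all $f, g \in L^2(\mu)$; taking $f\ol g$ to range over a total set in $L^1(\mu)$ (for instance $g = \mathbbm 1$ and $f$ arbitrary bounded, or indicator functions) forces $S_\va^*(\sqva|m|^2) = \mathbbm 1$ $\mu$-a.e.

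One point deserving care is integrability: $m$ is only assumed to lie in $L^2(\mu)$, so $|m|^2\va((f\ol g)\cs)$ need not be integrable for arbitrary $f,g \in L^2(\mu)$, and $\sqva|m|^2$ need not be the argument of $S_\va^*$ in the $L^2$ sense. The clean way around this is to invoke the preceding lemma: $T_m$ is bounded on $L^2(\mu)$ iff $|m|^2 \in L^\infty(\mu)$, so in the regime where the isometry question is non-vacuous we may assume $m \in L^\infty(\mu)$, and then all the manipulations above are legitimate (and $\sqva|m|^2 \in L^2(\mu)$ since $\va \in M_2(\s,\mu)$). The main obstacle is thus not the algebra — which is a one-line application of the pull-out property — but making sure the domain/integrability hypotheses are stated so that $S_\va^*(\sqva|m|^2)$ is well-defined and the polarization/density argument in the converse direction goes through; I would handle this by restricting to $m \in L^\infty(\mu)$ at the outset, consistent with the boundedness lemma just proved.
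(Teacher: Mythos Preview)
Your proof is correct and follows essentially the same route as the paper: both reduce the question to the pull-out property of $S_\va^*$ to extract the factor $f$ (or $f\ol g$) and leave behind $S_\va^*(\sqva|m|^2)$. The paper phrases it more compactly by computing $T_m^*T_m(f) = S_\va^* M_{\ol m}(m\sqva(f\cs)) = f\, S_\va^*(\sqva|m|^2)$ directly from the factorization $T_m = M_m S_\va$, rather than via inner products, but the content is identical.
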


\begin{proof}
We note that $T^*_{m}(f) = S^*_\va M_{\ol m} (f)$. Using 
the pull-out property for $S^*_\va$, we can write 
$$
T^*_{m} T_{m}(f) =  S^*_\va M_{\ol m} (m_i \sqva (f\cs)) =
f S^*_\va (|m|^2 \sqva)
$$
where  $f \in L^2(\mu)$. This proves the lemma. 
\end{proof}

\begin{remark}
It follows from Lemma \ref{lem T_m iso} that $T_{m}$ is 
an isometry if and only if 
$$
\mathbb E_\va (\sqva |m|^2 ) = \sqva.
$$
\end{remark}
     
\begin{lemma}\label{lem T*T}
 The operators $T_m, S_\va,$ and $\mathbb E_\va$ satisfy the 
 properties:
\be\label{eq T*T}
T^*_{m_1}T_{m_2}(f) = f S^*_\va(\sqva\; \ol m_1 m_2), 
\ee
\be \label{eq TT*}
T_{m_1}T^*_{m_2}(f) = m_1 \mathbb E_\va (\ol m_2 f).
\ee 
\end{lemma}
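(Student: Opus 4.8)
The proof rests entirely on three ingredients already in hand: the factorization $T_{m_i} = M_{m_i} S_\va$ from \eqref{eq def T}, the identity $\mathbb E_\va = S_\va S^*_\va$ from Proposition \ref{prop cond exp}(1), and the pull-out property \eqref{eq pullout} for the transfer operator $S^*_\va$ established in Theorem \ref{thm Sphi is TO}(2). First I would record that, since $S_\va$ is bounded (Lemma \ref{lem S_{va} iso}) and each $M_{m_i}$ is bounded on $L^2(\mu)$ whenever $|m_i|^2 \in L^\infty(\mu)$ (as in the preceding lemma), the operators $T_{m_i}$ are bounded and $T^*_{m_i} = S^*_\va M^*_{m_i} = S^*_\va M_{\ol m_i}$; here $M^*_{m_i} = M_{\ol m_i}$ because the adjoint of a multiplication operator is multiplication by the conjugate.

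For \eqref{eq T*T}, I would compute directly
$$
T^*_{m_1} T_{m_2}(f) = S^*_\va M_{\ol m_1}\big(M_{m_2} S_\va(f)\big) = S^*_\va\big(\ol m_1 m_2\, \sqva\, (f\cs)\big),
$$
and then apply the pull-out property \eqref{eq pullout} with the $\sigma^{-1}(\B)$-measurable factor $(f\cs)$ pulled out, obtaining $f\, S^*_\va(\sqva\, \ol m_1 m_2)$, which is the claimed formula. It is worth noting explicitly that $(f\cs)$ plays the role of the composed function in \eqref{eq pullout}, so this is a legitimate application even though $f$ itself need not be $\mc A$-measurable.

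For \eqref{eq TT*}, I would simply write
$$
T_{m_1} T^*_{m_2}(f) = M_{m_1}\big(S_\va S^*_\va\big)\big(M_{\ol m_2}(f)\big) = m_1\, \mathbb E_\va(\ol m_2 f),
$$
using $\mathbb E_\va = S_\va S^*_\va$ in the middle step. Both computations are routine; the only point requiring a moment's care is the justification that the pull-out identity applies in $L^2(\mu)$ rather than merely on bounded Borel functions, which is exactly what Theorem \ref{thm Sphi is TO}(2) provides. I do not anticipate any genuine obstacle here — the lemma is a bookkeeping consequence of the operator identities already proved, and its role is to set up the Cuntz-relation computations in the theorem that follows.
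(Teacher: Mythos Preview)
Your proposal is correct and follows essentially the same route as the paper: both arguments use the factorization $T_{m_i}=M_{m_i}S_\va$, the adjoint $T^*_{m_i}=S^*_\va M_{\ol m_i}$, the pull-out property of $S^*_\va$ for \eqref{eq T*T}, and the identity $\mathbb E_\va=S_\va S^*_\va$ for \eqref{eq TT*}. Your write-up is in fact slightly more explicit than the paper's about why the pull-out step is legitimate in $L^2(\mu)$.
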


\begin{proof}
Indeed, we have 
$$
\ba 
T^*_{m_1}T_{m_2}(f) =  & T^*_{m_1} (\sqva\; m_2 (f\cs))\\
= & S^*_{m_2} M_{\ol m_1} (\sqva\; m_2 (f\cs))\\
= & f S^*_{m_2} (\sqva\; \ol m_1 m_2),
\ea 
$$
and
$$
T_{m_1}T^*_{m_2}(f) = M_{m_1} S_\va S^*_\va M_{\ol m_2}(f) 
= m_1 \mathbb E_\va (\ol m_2 f).
$$
\end{proof}

\begin{remark}
Let $g$ be a bounded positive Borel function. Take a Markovian 
function $\va \in M(\sigma, \mu)$ and consider $\psi =
(g\cs) \va g^{-1}$. By Proposition \ref{prop Mark for mu nu} we 
see that $\psi \in M(\s, \nu)$ where $d\nu = g d\mu$.  
Then 
$$
S_\psi = M^{-1}_{\sqrt{g}} S_\va M_{\sqrt{g}}.
$$
Then a direct computation shows that $S_\psi$ is an isometry 
in  $L^2(\nu)$ where $d\nu = g\, d\mu$.

Denote by $\wt T_m$ the operator acting on $L^2(\nu)$:  
$f \mapsto m S_\psi(f)$. By Lemma \ref{lem T_m iso}, $\wt T_m$
is an isometry in $L^2(\nu)$ if and only if 
$S^*_\psi(\sqrt{\psi}|m|^2) = \mathbbm 1$. It follows from the 
definition of the operators $T_m$ and $\wt T_m$ that 
$$
\wt T_m = M^{-1}_{\sqrt{g}} T_m M_{\sqrt{g}}.
$$
\end{remark}

\begin{theorem}\label{thm main1} Let $\sigma$ be an onto 
endomorphism of $\sms$ where $\mu$ is quasi-invariant with 
respect to $\s$. 
Let $\{m_i : i \in \Lambda\}$ be a family of
complex-valued functions.  
The operators $\{T_{m_i}, i \in \Lambda$\} generate  a 
representation of the Cuntz algebra $\mathcal O_{|\Lambda|}$ 
if and only if 
$$
(i)\ \ S^*_\va (\sqva \; \ol m_j m_i) = \delta_{ij} 
\mathbbm 1,
$$
$$
(ii) \ \ \sum_{i\in \Lambda} m_i \mathbb E_\va( \ol m_i f) 
= f,
\ \ \ f \in L^2(\mu).
$$ 
\end{theorem}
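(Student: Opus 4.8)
The plan is to recognize that the Cuntz relations for $\{T_{m_i}\}$ split into exactly two parts, and that each part translates into one of conditions (i) and (ii) via the formulas for $T^*_{m_i}T_{m_j}$ and $T_{m_i}T^*_{m_j}$ already computed in Lemma \ref{lem T*T}. Recall that a family of operators on a Hilbert space generates a representation of $\mathcal O_{|\Lambda|}$ if and only if (a) each $T_{m_i}$ is an isometry with $T^*_{m_i}T_{m_j}=\delta_{ij}\mathbb I$ (isometries with mutually orthogonal ranges), and (b) $\sum_{i\in\Lambda}T_{m_i}T^*_{m_i}=\mathbb I$ (the ranges span the whole space), where the sum is taken in the strong operator topology when $\Lambda$ is infinite. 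So the proof is really a matter of unwinding these two relations.

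First I would treat relation (a). By \eqref{eq T*T}, $T^*_{m_j}T_{m_i}(f)=f\,S^*_\va(\sqva\,\ol m_j m_i)$ for all $f\in L^2(\mu)$; since this is multiplication by the fixed $\sB$-measurable function $S^*_\va(\sqva\,\ol m_j m_i)$, the operator equals $\delta_{ij}\mathbb I$ if and only if $S^*_\va(\sqva\,\ol m_j m_i)=\delta_{ij}\mathbbm 1$, which is precisely (i). (The diagonal case $i=j$ is Lemma \ref{lem T_m iso}, giving that each $T_{m_i}$ is an isometry; the off-diagonal case gives orthogonality of ranges.) One subtle point to address here: passing from ``multiplication operator is zero'' to ``multiplier is a.e.\ zero'' needs the multiplier to lie in $L^\infty$ or at least to be a genuine a.e.-defined function; since $S^*_\va$ of an $L^1$ function is in $L^1(\mu_{\mc A})$ and $T^*_{m_j}T_{m_i}$ being bounded forces the multiplier into $L^\infty$, this is fine, but I would note it explicitly.

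Next I would treat relation (b). By \eqref{eq TT*}, $T_{m_i}T^*_{m_i}(f)=m_i\,\mathbb E_\va(\ol m_i f)$, so $\sum_{i\in\Lambda}T_{m_i}T^*_{m_i}(f)=\sum_{i\in\Lambda}m_i\,\mathbb E_\va(\ol m_i f)$; this equals $f$ for every $f\in L^2(\mu)$ exactly when (ii) holds, with the convergence of the series understood in the $L^2$-norm (equivalently the strong operator topology), which matches the standard formulation of the Cuntz relations for countable $\Lambda$. I would also remark that, once (a) is known, the summands $T_{m_i}T^*_{m_i}$ are mutually orthogonal projections, so the partial sums increase and the series converges automatically to a projection $\leq \mathbb I$; condition (ii) is the statement that this projection is all of $\mathbb I$.

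The main obstacle is not any single computation — all the operator identities are in hand from Lemma \ref{lem T*T} — but rather the care needed in the two ``softness'' points: (1) justifying that an operator of the form $f\mapsto f\cdot\psi$ vanishes iff $\psi=0$ a.e., which requires knowing $\psi\in L^\infty(\mu_{\mc A})$, itself a consequence of boundedness of $T^*_{m_j}T_{m_i}$ (hence, implicitly, of $|m_i|^2\in L^\infty$, cf.\ the boundedness lemma); and (2) being precise about the topology in which $\sum_i T_{m_i}T^*_{m_i}=\mathbb I$ is asserted when $|\Lambda|=\infty$, and checking that the $L^2$-convergence in (ii) is the correct reformulation. Both are routine once flagged, so the proof will be short: state the two-part characterization of Cuntz representations, quote \eqref{eq T*T} and \eqref{eq TT*}, and read off (i) and (ii).
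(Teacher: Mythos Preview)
Your proposal is correct and follows essentially the same route as the paper: both proofs reduce the Cuntz relations to the two formulas of Lemma~\ref{lem T*T} and read off conditions (i) and (ii). The only cosmetic difference is that the paper verifies orthogonality of the range projections via the inner product $\langle T_{m_i}T^*_{m_i}f, T_{m_j}T^*_{m_j}g\rangle_\mu$ (reducing to $T^*_{m_j}T_{m_i}$ inside), whereas you invoke $T^*_{m_j}T_{m_i}=\delta_{ij}\mathbb I$ directly from \eqref{eq T*T}; your extra care about the multiplier lying in $L^\infty$ and about strong convergence when $|\Lambda|=\infty$ is a welcome addition the paper leaves implicit.
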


\begin{proof} We first note that condition (i) of the 
theorem 
implies that the operators $T_{m_i}$ are isometries because
of Lemma \ref{lem T_m iso}. 
The Cuntz relations for isometries  $\{T_{m_i}\}_{i\in 
\Lambda}$ mean that 
$$
\sum_{i\in \Lambda} T_{m_i} T_{m_i}^* = \mathbb I
$$
and the projections $T_{m_i} T_{m_i}^*$ are mutually 
orthogonal.

We will show that, for $f,g \in L^2(\mu)$ and $i\neq j$, 
the vectors $T_{m_i} T_{m_i}^*(f)$ and $T_{m_j} 
T_{m_j}^*(g)$ 
are orthogonal if and only if condition (i) of the theorem 
holds. In the following computation, we use \eqref{eq T*T}. 
$$
\ba 
\langle T_{m_i} T_{m_i}^*(f), T_{m_j}T_{m_j}^*(g)\rangle_\mu 
= & \langle (T_{m_j}^* T_{m_i}) T_{m_i}^*(f)), 
T_{m_j}^*(g)\rangle_\mu \\
= & \langle S^*_\va(\sqva\; \ol m_j m_i)(\xi),\eta 
\rangle_\mu
\ea 
$$
where $\xi = T_{m_i}^*(f)$ and $\eta = T_{m_j}^*(g)$. Since 
$f, g$ are arbitrary functions, the left-hand side is zero 
if and only if $S^*_\va(\sqva\; \ol m_j m_i) = \delta_{ij} 
\mathbb I$.

Next, we use \eqref{eq TT*} to see when the identity 
operator 
is decomposed in the sum of  orthogonal projections 
$T_{m_i}T_{m_i}^*$. It follows from Lemma \ref{lem T*T} that,
for any $f \in L^2(\mu)$,
$$
\sum_{i\in \Lambda} T_{m_i}T_{m_i}^*(f) = 
\sum_{i\in \Lambda} m_i \mathbb E_\va (\ol m_i f) 
$$
Hence, the property $\sum_{i\in \Lambda} T_{m_i} T_{m_i}^* = 
\mathbb I$ is equivalent to condition (ii) of the  theorem.
\end{proof}

\begin{corollary}
It follows from Theorem \ref{thm main1} that 
$$
L^2(\mu) = \bigoplus_{i\in \Lambda} \ m_i \mc H_\va.
$$
In other words, $\{m_i : i \in \Lambda\}$ is an orthogonal 
module basis for $L^2(\mu)$ over $\mc H_\va =
\sqva\; L^2(\mu_{\mc A})$ where $\mc A = \s^{-1} (\B)$. 
\end{corollary}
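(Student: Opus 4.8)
The plan is to read the claimed decomposition directly off the Cuntz relations furnished by Theorem~\ref{thm main1}, so that essentially nothing new has to be proved: the work is only to identify the ranges of the isometries $T_{m_i}$ and to restate the conclusion in module language. Recall from \eqref{eq def T} that $T_{m_i}=M_{m_i}S_\va$, and that once the conclusion of Theorem~\ref{thm main1} holds (equivalently, its hypotheses $(i)$ and $(ii)$), each $T_{m_i}$ is an isometry of $L^2(\mu)$ (the diagonal $i=j$ case of $(i)$ via Lemma~\ref{lem T_m iso}), the projections $T_{m_i}T_{m_i}^*$ are mutually orthogonal, and $\sum_{i\in\Lambda}T_{m_i}T_{m_i}^*=\mathbb I$.

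First I would compute $\operatorname{Ran}(T_{m_i})$. Since $S_\va$ is, by the definition of $\mc H_\va$ together with Lemma~\ref{lem S_{va} iso}, an isometric isomorphism of $L^2(\mu)$ onto $\mc H_\va$, the relation $T_{m_i}=M_{m_i}S_\va$ shows $\operatorname{Ran}(T_{m_i})=m_i\,\mc H_\va$; this subspace is closed, being the range of an isometry, and $M_{m_i}$ restricted to $\mc H_\va$ is itself an isometry onto $m_i\,\mc H_\va$ (it equals $T_{m_i}\circ S_\va^{-1}$). Hence $T_{m_i}T_{m_i}^*$ is the orthogonal projection of $L^2(\mu)$ onto $m_i\,\mc H_\va$, and the Cuntz relations give at once that the $\{m_i\,\mc H_\va\}_{i\in\Lambda}$ are pairwise orthogonal closed subspaces whose closed linear span is $L^2(\mu)$, i.e. $L^2(\mu)=\bigoplus_{i\in\Lambda}m_i\,\mc H_\va$. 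For the concrete form of the expansion I would invoke \eqref{eq TT*}: for $f\in L^2(\mu)$,
\[
f=\sum_{i\in\Lambda}T_{m_i}T_{m_i}^*(f)=\sum_{i\in\Lambda}m_i\,\mathbb E_\va(\ol{m_i}f),
\]
with each coefficient $\mathbb E_\va(\ol{m_i}f)\in\mc H_\va$ by Proposition~\ref{prop cond exp}; uniqueness of the coefficients is the orthogonality just established, which can also be seen directly from \eqref{eq T*T} and $(i)$, since $\langle m_ih,m_jk\rangle=\langle T_{m_i}u,T_{m_j}v\rangle=\langle u,\,T_{m_i}^*T_{m_j}v\rangle=0$ for $i\ne j$ when $h=S_\va u$, $k=S_\va v$.

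It then remains to rewrite $\mc H_\va$ as $\sqva\,L^2(\mu_{\mc A})$. This is bookkeeping: a function is $\mc A$-measurable precisely when it is of the form $g\cs$, so $\mc H_\va=\{\sqva(g\cs):g\in L^2(\mu)\}$ is exactly $\sqva$ times the space of $\mc A$-measurable functions $h$ with $\sqva\,h\in L^2(\mu)$; Markovianity of $\va$ shows that last condition amounts to $\int_X|h|^2\va\,d\mu<\infty$, which is the sense in which $L^2(\mu_{\mc A})$ is meant here (and when $\va=\omega_\mu$ this genuinely is $L^2(X,\mc A,\mu_{\mc A})$). Combining this identification with the previous paragraph yields precisely the assertion that $\{m_i:i\in\Lambda\}$ is an orthogonal module basis of $L^2(\mu)$ over $\mc H_\va$.

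I do not anticipate a genuine obstacle; the statement is a translation of the Cuntz relations into the language of Hilbert $C^*$-modules. The few points deserving a line of care are: that $m_i\,\mc H_\va$ is closed and that multiplication by $m_i$ acts isometrically on $\mc H_\va$ (both forced by $T_{m_i}$ being an isometry), the precise meaning of the symbol $\sqva\,L^2(\mu_{\mc A})$ when $\va\neq\omega_\mu$, and --- when $\Lambda$ is countably infinite --- that $\bigoplus$ denotes the Hilbert-space direct sum and that $\sum_{i\in\Lambda}T_{m_i}T_{m_i}^*=\mathbb I$ is understood in the strong operator topology.
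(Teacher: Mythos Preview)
Your proposal is correct and matches the paper's intent: the paper gives no proof at all for this corollary, simply asserting it as a direct consequence of Theorem~\ref{thm main1}, and your argument makes explicit exactly the implicit reasoning---identify $\operatorname{Ran}(T_{m_i})=m_i\mc H_\va$ via $T_{m_i}=M_{m_i}S_\va$, then read the orthogonal decomposition off the Cuntz relations $\sum_i T_{m_i}T_{m_i}^*=\mathbb I$. Your extra care about the precise meaning of $\sqva\,L^2(\mu_{\mc A})$ when $\va\neq\omega_\mu$ is a legitimate observation (the paper's notation is slightly loose there), but the paper evidently treats this identification as definitional, so you need not dwell on it.
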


\subsection{Invariant measure} 

In what follows we will consider the case of a $\s$-invariant
measure $\mu$. 

\begin{lemma}
Let $\s \in End\sms$ be an onto endomorphism. Then if 
$\mu $ is $\s$-invariant probability measure, then the only 
$\sB$-measurable Markovian 
function is the constant function $\mathbbm 1$.
\end{lemma}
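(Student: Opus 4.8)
The plan is to reduce the statement to the characterization of Markovian functions in terms of the transfer operator $S^*_\s$, which is available here because a Markovian function lies in $\FXB$ and hence, $\mu$ being a probability measure, in $L^2(\mu)$. First I would invoke Proposition \ref{prop phi markovian}: a function $\va \in L^2(\mu)$ is Markovian with respect to $(\s,\mu)$ if and only if $S^*_\s(\va) = \mathbbm 1$.

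Next I would use the structural fact recorded just before Proposition \ref{prop cond exp} (and also contained in Proposition \ref{prop TO - condexp}(3)): a function is $\mc A = \sB$-measurable precisely when it has the form $g\cs$ for some $\B$-measurable $g$, and since $\va$ is bounded we may take $g \in \FXB$. Thus $\va = g\cs = S_\s(g)$.

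Now the hypothesis that $\mu$ is $\s$-invariant enters: by Lemma \ref{lem_isom}(1) the composition operator $S_\s$ is an isometry on $L^2(\mu)$, so $S^*_\s S_\s = \mathbb I$. Combining the three ingredients, $\mathbbm 1 = S^*_\s(\va) = S^*_\s S_\s(g) = g$, whence $\va = S_\s(g) = S_\s(\mathbbm 1) = \mathbbm 1$, which is the claimed uniqueness.

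I do not expect a genuine obstacle; the only point needing a moment's care is the legitimacy of applying the $L^2$-characterization of Markovian functions, which is fine since $\mu(X)=1$ and $\va$ is bounded. An alternative route avoids operators entirely: $\s$-invariance gives $\int_X (f\cs)\,d\mu = \int_X f\,d\mu$, so the Markovian identity \eqref{eq-varphi} becomes $\int_X (f\cs)(\va - \mathbbm 1)\,d\mu = 0$ for all $f \in L^1(\mu)$; writing $\va - \mathbbm 1 = (g - \mathbbm 1)\cs$ and replacing $f$ by $f(g-\mathbbm 1)$ collapses the left side to $\int_X f(g-\mathbbm 1)\,d\mu$, forcing $g = \mathbbm 1$ $\mu$-a.e.\ and hence $\va = \mathbbm 1$.
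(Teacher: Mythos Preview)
Your proposal is correct. The paper's own argument is essentially your ``alternative route'': it writes
\[
\int_X (f\cs)\,\va\; d\mu \;=\; \int_X f\; d\mu \;=\; \int_X f\; d(\mu\cs^{-1}) \;=\; \int_X (f\cs)\; d\mu
\]
and concludes $\va = \mathbbm 1$ on $\sB$ because $f\cs$ ranges over all $\sB$-measurable test functions. (Your phrase ``replacing $f$ by $f(g-\mathbbm 1)$'' is not actually needed---$\s$-invariance already collapses $\int_X (f(g-\mathbbm 1))\cs\,d\mu$ to $\int_X f(g-\mathbbm 1)\,d\mu$---but the conclusion is the same.)

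Your main operator-theoretic route, via $S^*_\s(\va)=\mathbbm 1$ together with $\va = S_\s(g)$ and $S^*_\s S_\s = \mathbb I$, is a genuinely different packaging. It is a clean two-line argument once Proposition~\ref{prop phi markovian} and Lemma~\ref{lem_isom}(1) are in hand, and it makes the role of $\s$-invariance (namely, that $S_\s$ becomes an isometry) completely transparent. The paper's direct computation, on the other hand, is self-contained and avoids invoking any earlier operator results. Both are equally short; yours leans on the machinery already built, the paper's stands alone.
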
 

\begin{proof}
Indeed, let $\va$ be a Markovian function. 
Then we have
$$
\int_X (f\cs) \va \; d\mu = \int_X f\; d\mu = \int_X f \; 
d(\mu \cs)^{-1} = \int_X (f\cs) \; d\mu.
$$
Since $f$ is arbitrary, we get $\va = \mathbbm 1$ on $\sB$.
\end{proof}

We will apply the constructions of 
Subsection \ref{subs q-inv meas} to the case when 
$\va =\mathbbm 1$. Because the results of Subsection 
\ref{subs q-inv meas} are proved in more general settings, 
we will just formulate the corresponding facts without proof.  

Let $\underline m = \{m_i : i \in \Lambda\}$ be a collection 
of complex-valued bounded functions, and let 
$$
S_{m_i}(f) = m_i (f\cs), \quad f \in L^2(\mu)
$$
be the corresponding weighted composition operators. 

In this lemma, we collect the properties of $S_{m_i}$.

\begin{lemma} Let $f, g \in L^2(\mu)$.

(1) $S^*_m$ is an isometry on $L^2(\mu)$ if and only if
$S^*_\s( |m|^2) = \mathbbm 1$.

(2) $\mathbb E_\s := S_\s S^*_\s$ is the conditional expectation
from $L^2(\mu)$ onto $L^2(\mu_{\mc A})$. 

(3) $E_\s((f\cs)g) = (f\cs) \mathbb E_\s(g)$. 

(4) $S^*_\s (f \mathbb E_\s(g)) = S^*_\s (f) S^*_\s (g)$.

(5) $S_{m_1}^* S_{m_2}(f) = S^*_\s(\ol m_1 m_2) f$.

(6) $S_{m_1} S_{m_2}^*(f) = m_1 \mathbb E(\ol m_2 f)$.
\end{lemma}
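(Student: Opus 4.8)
The plan is to obtain each of the six items as the specialization, to the Markovian weight $\va=\mathbbm 1$, of a result already established in Subsection~\ref{subs q-inv meas}. So the first step is to record that $\mathbbm 1$ is an admissible weight in the present setting: since $\mu$ is $\s$-invariant, $\int_X (f\cs)\,\mathbbm 1\; d\mu=\int_X f\; d(\mu\csi1)=\int_X f\; d\mu$ for all $f\in L^1(\mu)$, which is exactly \eqref{eq-varphi} with $\va=\mathbbm 1$, so $\mathbbm 1\in M(\s,\mu)$; and $\mathbbm 1\in L^2(\mu)$ because $\mu$ is finite. With $\va=\mathbbm 1$ we have $\sqva=\mathbbm 1$, hence $S_\va=M_{\sqva}S_\s=S_\s$ and $T_{m_i}=M_{m_i}S_\va=S_{m_i}$. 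Thus the operators of this subsection are literally the operators $S_\va,\ T_{m_i},\ \mathbb E_\va=S_\va S_\va^*$ of Subsection~\ref{subs q-inv meas} evaluated at $\va=\mathbbm 1$.

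Granting this identification, the six statements follow by inspection. Item~(1) (the operator in question being $S_m=T_m$) is Lemma~\ref{lem T_m iso}, whose condition $S^*_\va(\sqva|m|^2)=\mathbbm 1$ becomes $S^*_\s(|m|^2)=\mathbbm 1$. Item~(2) is Corollary~\ref{cor cond exp is ss*}(1): $\s$-invariance of $\mu$ makes $S_\s$ an isometry by Lemma~\ref{lem_isom}(1), so $\mathbb E_\s=S_\s S^*_\s$ is a genuine self-adjoint idempotent onto $L^2(\mu_{\mc A})$. Items~(3) and~(4) are parts~(2) and~(3) of Proposition~\ref{prop cond exp}: the first reads $\mathbb E_\va((f\cs)g)=(f\cs)\mathbb E_\va(g)$ verbatim, and the second, $S^*_\va(f\mathbb E_\va(g))=S^*_\va(g)S^*_\va(f\sqva)$, collapses to $S^*_\s(f\mathbb E_\s(g))=S^*_\s(g)S^*_\s(f)=S^*_\s(f)S^*_\s(g)$ once $\sqva$ is replaced by $\mathbbm 1$, pointwise multiplication being commutative. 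Items~(5) and~(6) are the identities \eqref{eq T*T} and \eqref{eq TT*} of Lemma~\ref{lem T*T} with $\sqva=\mathbbm 1$, namely $T^*_{m_1}T_{m_2}(f)=fS^*_\s(\ol m_1 m_2)$ and $T_{m_1}T^*_{m_2}(f)=m_1\mathbb E_\s(\ol m_2 f)$.

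Since everything reduces to this substitution, there is no genuine obstacle; the one point that must be asserted rather than merely computed is the legitimacy of taking $\va=\mathbbm 1$, i.e. that the constant function is Markovian here, which holds precisely because $\mu$ is $\s$-invariant. If a self-contained argument were preferred, each item can instead be re-derived by the same short inner-product computations used in Lemma~\ref{lem T_m iso}, Proposition~\ref{prop cond exp}, and Lemma~\ref{lem T*T}, now with the weight $\mathbbm 1$ and with \eqref{eq-varphi} reading $\int_X(f\cs)\,d\mu=\int_X f\,d\mu$; I would mention this as an alternative route but not carry out the routine verifications.
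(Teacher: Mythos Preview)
Your proposal is correct and matches the paper's approach exactly: the paper explicitly states that these facts are the specializations to $\va=\mathbbm 1$ of the results from Subsection~\ref{subs q-inv meas} and omits the proof, which is precisely what you have spelled out. Your additional remark that $\mathbbm 1$ is Markovian because $\mu$ is $\s$-invariant makes explicit the one point the paper leaves implicit.
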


Here is the modified version of Theorem \ref{thm main1}.
 
\begin{theorem}\label{thm main} 
Let $\sigma$ be a surjective endomorphism of $\sms$ with 
$\s$-invariant measure $\mu$.
Let $\{m_i: i \in \Lambda\}$ be a family of bounded 
complex-valued functions.  
The operators $\{S_{m_i}, i \in \Lambda$\} generate  a 
representation of the Cuntz algebra $\mathcal O_{|\Lambda|}$ 
if and only if 
$$
(i)\ \ S^*_\s ( \ol m_j m_i) = \delta_{ij} 
\mathbbm 1,
$$
$$
(ii) \ \ \sum_{i\in \Lambda} m_i \mathbb E_\s( \ol m_i f) = f,
\ \ \ f \in L^2(\mu).
$$ 
\end{theorem}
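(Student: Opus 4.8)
The plan is to recognize Theorem \ref{thm main} as the specialization of Theorem \ref{thm main1} to $\va = \mathbbm 1$, and then either quote that theorem or, equivalently, rerun its three-step argument in this cleaner setting. Since $\mu$ is $\s$-invariant, one may take $\va = \mathbbm 1$ (which is Markovian, as $\int (f\cs)\,d\mu = \int f\, d(\mu\csi1) = \int f\, d\mu$); then $\sqva = \mathbbm 1$, the weighted composition operator $S_\va$ of \eqref{eq S_va} is the composition operator $S_\s$, the operators $T_{m_i}$ of \eqref{eq def T} become $S_{m_i}(f) = m_i(f\cs)$, and the projection $\mathbb E_\va = S_\va S_\va^*$ becomes $\mathbb E_\s = S_\s S_\s^*$, which by Corollary \ref{cor cond exp is ss*} is the conditional expectation onto $L^2(\mu_{\mc A})$, $\mc A = \sB$. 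Under this dictionary conditions (i) and (ii) of Theorem \ref{thm main} are literally conditions (i) and (ii) of Theorem \ref{thm main1} (note that boundedness of the $m_i$ makes them lie in $L^2(\mu)$ and makes each $T_{m_i}$ bounded), so the statement follows; below I indicate the direct argument.

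First I would settle the isometry question: a representation of $\mathcal O_{|\Lambda|}$ consists of isometries, and conversely the diagonal case $i=j$ of (i) reads $S_\s^*(|m_i|^2) = \mathbbm 1$, which by the lemma collecting the properties of $S_{m_i}$ (equivalently, Lemma \ref{lem T_m iso} with $\va = \mathbbm 1$) says exactly that $S_{m_i}$ is an isometry; thus the diagonal part of (i) is equivalent to ``every $S_{m_i}$ is isometric.'' Assuming this, each $S_{m_i}^*$ is a coisometry and hence maps onto $L^2(\mu)$. Next, for $i\neq j$ and arbitrary $f,g\in L^2(\mu)$, I would use the identity $S_{m_j}^*S_{m_i} = M_{S_\s^*(\ol m_j m_i)}$ from that lemma (Lemma \ref{lem T*T} with $\va=\mathbbm 1$) to write
$$
\langle S_{m_i}S_{m_i}^* f,\ S_{m_j}S_{m_j}^* g\rangle_\mu
= \langle S_\s^*(\ol m_j m_i)\,\xi,\ \eta\rangle_\mu,
\qquad \xi = S_{m_i}^* f,\ \ \eta = S_{m_j}^* g;
$$
as $\xi,\eta$ run over all of $L^2(\mu)$, the ranges of $S_{m_i}$ and $S_{m_j}$ are orthogonal for all $f,g$ iff multiplication by the function $S_\s^*(\ol m_j m_i)$ is the zero operator, i.e.\ $S_\s^*(\ol m_j m_i)=0$ a.e.; together with the diagonal case this is exactly (i). Finally, using $S_{m_i}S_{m_i}^*(f) = m_i\mathbb E_\s(\ol m_i f)$ (again that lemma, or Lemma \ref{lem T*T}), I would sum over $i$ to get $\sum_{i\in\Lambda} S_{m_i}S_{m_i}^*(f) = \sum_{i\in\Lambda} m_i\mathbb E_\s(\ol m_i f)$, so $\sum_i S_{m_i}S_{m_i}^* = \mathbb I$ holds iff (ii) holds. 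Combining the three steps, the $S_{m_i}$ form a Cuntz family precisely when (i) and (ii) hold.

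The only delicate point is the passage from ``the pairing vanishes for all $f,g$'' to the operator/function identity in the orthogonality step, together with the analogous faithfulness of the multiplication representation of $L^\infty(\mu)$ on $L^2(\mu)$; both rest on surjectivity of the coisometries $S_{m_i}^*$ and on non-degeneracy of $L^2(\mu)$, and are the same bookkeeping already used in the proof of Theorem \ref{thm main1}. One should also remark that when (ii) holds the finite partial sums $\sum_{i\in F} S_{m_i}S_{m_i}^*$ are increasing projections (by the diagonal part of (i) together with orthogonality), so the series converges strongly to a projection, which is $\mathbb I$ exactly under (ii) — this is the sense in which the completeness relation is read when $|\Lambda| = \infty$. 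Beyond these routine matters there is no genuine obstacle, since everything reduces to identities already established for $S_\s$, $S_\s^*$ and $\mathbb E_\s$.
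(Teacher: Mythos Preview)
Your proposal is correct and matches the paper's approach exactly: the paper explicitly states that Theorem \ref{thm main} is the specialization of the quasi-invariant results to $\va = \mathbbm 1$ and omits the proof, noting that the facts of Subsection \ref{subs q-inv meas} were proved in the more general setting. Your rerun of the three steps of Theorem \ref{thm main1} under the dictionary $S_\va = S_\s$, $T_{m_i} = S_{m_i}$, $\mathbb E_\va = \mathbb E_\s$ is precisely what the paper intends.
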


It follows from Theorem \ref{thm main} that the Hilbert space
$L^2(\mu)$ admits the decomposition 
$$
L^2(\mu) = \bigoplus_{i\in \Lambda} \ m_i L^2(\mu_{\mc A})
$$
if and only if the operators $S_{m_i}$ are the generators 
of a representation of the Cuntz algebra. 
In other words, $\{m_i : i \in \Lambda\}$ is an orthogonal 
module basis for $L^2(\mu)$ over $L^2(\mu_{\mc A})$ where 
$\mc A = \s^{-1} (\B)$. 
\\

We finish this section with a discussion of the relations 
between the operators $S_\s$, $S_\va$ and $\mathbb E_\s$,
$\mathbb E_\va$ where $\va$ is a Markovian function.

\begin{proposition}\label{prop sigma vs va}
The following formulas hold: for $g\in L^2(\mu)$
$$
(1)\ \ S^*_\va (g) = S^*_\s(\sqva\; g),\quad 
(2)\ \ \mathbb E_\va(g) = \sqva \; \mathbb E_\s (\sqva\; g).
$$
\end{proposition}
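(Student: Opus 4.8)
The plan is to deduce both identities from the single factorization $S_\va = M_{\sqva}S_\sigma$ noted after \eqref{eq S_va}, where $M_{\sqva}$ is multiplication by $\sqva$. Heuristically $S_\va^* = S_\sigma^* M_{\sqva}^* = S_\sigma^* M_{\sqva}$ because $\sqva$ is real-valued (this is exactly where the hypothesis $\va>0$ is used), which is already formula (1); even more directly, comparing the explicit expression $S_\va^*(g)=\dfrac{(g\sqva\, d\mu)\csi1}{d\mu}$ from \eqref{eq adj of S} with \eqref{eq formula for S*} evaluated at $\sqva\, g$ gives (1) on the nose. To present it cleanly I would argue by pairing: for arbitrary $f,g\in L^2(\mu)$,
$$
\langle f, S_\va^*(g)\rangle_\mu = \langle S_\va(f), g\rangle_\mu = \langle \sqva\,(f\cs), g\rangle_\mu = \langle f\cs, \sqva\, g\rangle_\mu = \langle S_\sigma(f), \sqva\, g\rangle_\mu = \langle f, S_\sigma^*(\sqva\, g)\rangle_\mu,
$$
the middle equality using that $\sqva$ is real. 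Since $f$ ranges over all of $L^2(\mu)$ this forces $S_\va^*(g)=S_\sigma^*(\sqva\, g)$, which is (1); in particular the right-hand side lies in $L^2(\mu)$ because the left-hand side does.

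For (2) I would simply substitute. By definition $\mathbb E_\va = S_\va S_\va^*$ (Proposition \ref{prop cond exp}); combining this with the factorization of $S_\va$ and part (1) gives
$$
\mathbb E_\va(g) = S_\va\big(S_\va^*(g)\big) = \sqva\, S_\sigma\big(S_\sigma^*(\sqva\, g)\big) = \sqva\, \big(S_\sigma S_\sigma^*\big)(\sqva\, g) = \sqva\, \mathbb E_\s(\sqva\, g),
$$
where the last equality is the identity $\mathbb E_\s = S_\sigma S_\sigma^*$ from Corollary \ref{cor cond exp is ss*}, which applies since $\mu$ is $\s$-invariant throughout this subsection.

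There is essentially no obstacle here: the statement is a short computation once the descriptions of $S_\va$, $S_\va^*$, $\mathbb E_\va$, and $\mathbb E_\s$ from earlier in the section are in hand. The two points worth flagging are (a) that $\sqva$ is real, so it may be moved freely across the inner product — precisely the role of the assumption $\va>0$; and (b) that $\mathbb E_\s = S_\sigma S_\sigma^*$ is available only in the $\s$-invariant regime, which is the standing assumption in this subsection. One should also recall that $S_\va$ is an isometry on all of $L^2(\mu)$ by Lemma \ref{lem S_{va} iso}, so $S_\va^*$ and the projection $\mathbb E_\va$ are everywhere defined and the manipulations above take place among genuine $L^2(\mu)$ vectors.
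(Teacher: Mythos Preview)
Your proof is correct and follows essentially the same approach as the paper: both argue (1) by pairing $\langle f, S_\va^*(g)\rangle_\mu = \langle S_\va(f), g\rangle_\mu$ and moving $\sqva$ across the inner product, and both obtain (2) by writing $\mathbb E_\va = S_\va S_\va^*$, applying (1), and then using $S_\va = \sqva\, S_\s$ together with $\mathbb E_\s = S_\s S_\s^*$. Your additional remarks (the alternative via the explicit formulas \eqref{eq adj of S} and \eqref{eq formula for S*}, the role of $\va>0$, and the standing $\s$-invariance hypothesis) are accurate and helpful but do not change the route.
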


\begin{proof}
For (1), take any functions $f, g \in L^2(\mu)$ and 
compute
$$
\ba 
\langle f, S^*_\va(g) \rangle_\mu = & 
\langle S^*_\va(f), g \rangle_\mu \\
= & \langle\sqva\;  (f\cs), g \rangle_\mu\\
=& \langle S_\s(f), \sqva\; g\rangle_\mu \\
=& \langle f, S^*_\s(\sqva\; g) \rangle_\mu. 
\ea
$$

To see that (2) is true, we recall that $S_\va (f) = 
\sqva\; S_\s (f)$, and then we can write
$$
\mathbb E_\va(f) = S_\va S^*_\va (f) = 
S_\va S^*_\s(\sqva \; f) = \sqva\; S_\s S^*_\s(\sqva \; f)
= \sqva\; \mathbb E_\s(\sqva \; f).
$$
\end{proof}

The results of Proposition \ref{prop sigma vs va} will be 
used in the next section.

\section{The set of wavelet filters} 
\label{sect wavelet filters}
In this section, we 
answer the question about the structure of the set of 
\textit{wavelet filters}, i.e., we describe the set 
of functions satisfying the conditions of 
Theorem \ref{thm main1}. 
We consider two different approaches: (i) it will be shown 
that there the set $\mathfrak M$ is isomorphic to the so 
called loop group $\mc G$; 
(ii) using the decomposition into cyclic representations, 
we describe elements $\underline m \in \mathfrak M$ as the
collection of cyclic vectors for the representation of 
$L^\infty(\mc A)$ in $L^2\sms$.

\subsection{Actions of loop groups on wavelet filters}

Consider bounded operators $B(l^2(\Lambda))$ in the Hilbert 
space $l^2(\Lambda)$ where $\Lambda$ is a countably infinite 
set. If $\{e_i : i \in \Lambda\}$ is the canonical 
orthonormal basis in $l^2(\Lambda)$, then we define the 
infinite $|\Lambda| \times |\Lambda|$ matrix $\wh A = 
(a_{ij})$ by setting $a_{ij} = \langle e_i, A e_j \rangle$.
This observation will allow us to work with matrix notation
in the computations below.

Let $\mc U$ be the group of all unitary operators in 
$B(l^2(\Lambda))$. Denote by $\mc G$ the group of Borel
functions on $(X, \B)$ with values in $\mc U$ (we recall 
that $\mc U$ is a Polish group). We use the notation
$G = (g_{ij}(x))$ for elements of $\mc G$. Then every 
entry $g_{ij}(x)$ is a Borel complex-valued matrix. 
The group $\mc G$ is called the \textit{loop group}.
We remark that for $G \in \mc G$, $G^* G = \mathbb I$ where
the matrix $G^* = \ol G^T$. We will use the relation
\be\label{eq G unitary}
\sum_{l\in \Lambda} \ol g_{li} g_{lj} = \delta_{ij}
\ee
where $G = (g_{ij}) \in \mc G$.

Let $\va$ be a Markovian function. We recall that, in this 
case, the operator $S_\va$ is isometric. 
We  denote the set of generalized wavelet filters by 
\be\label{eq M_phi}
\mathfrak M_\va := \{\underline m = (m_i)_{i \in \Lambda} : 
S_\va ^*(\sqva\; m_i \ol m_j) = \delta_{ij} \mathbbm 1,\ \ \ 
\sum_{i\in \Lambda} 
m_i \mathbb E_\va (\ol m_j f) = f\}.
\ee
We consider simultaneously the case of $\sigma$-invariant 
measure $\mu$ and the corresponding operators $S_\s$ and
$\mathbb E_\s$, see Section \ref{sect q-inv} for 
properties of these operators. In this case, we use the set
\be \label{eq M_sigma}
\mathfrak M_\s := \{\underline m = (m_i)_{i \in \Lambda} : 
S_\s ^*( m_i  \ol m_j) = \delta_{ij} \mathbbm 1,\ \ \ 
\sum_{i\in \Lambda} m_i \mathbb E_\s (\ol m_j f) = f\}.
\ee 

It turns out that the group $\mc G$ acts on the sets 
$\mathfrak M_\va$ and $\mathfrak M_\s$. Indeed, 
for a fixed $\underline m \in \mathfrak M_\va$ (or 
$\underline m \in \mathfrak M_\s$) and $G \in \mc 
G$, we define 
$\underline m^G := (m^G_i(x) : i \in \Lambda)$ by the 
formula 
\be \label{eq action G}
m^G_i(x)  = \sum_{j \in \Lambda} (\ol g_{ji}\cs)(x) m_j(x) 
\ee 
or $\underline m^G = (G^*\cs) \underline m$ in a short form. 

In the next statements, we will study the properties of 
this action of $\mc G$ on the sets of wavelet filters. 
For definiteness, we formulate these results for the 
set $\mathfrak M_\va$. The same proofs work for the 
action of $\mc G$ on $\mathfrak M_\s$, we will omit them.
We will show in the next lemmas that: (i) the set 
$\mathfrak M_\va$ is invariant with respect to the action
of group $\mc G$; (ii) formula
\eqref{eq action G} defines a group action on 
$\mathfrak M_\va$; (iii) this action of $\mc G$ is free and
transitive. 

\begin{lemma}\label{lem m^G}
If $\underline m \in \mathfrak M_\va$, then $\underline m^G 
\in \mathfrak M_\va$.
\end{lemma}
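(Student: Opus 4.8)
The plan is to verify directly that $\underline m^G$ satisfies the two conditions defining $\mathfrak M_\va$ in \eqref{eq M_phi}, namely that $S_\va^*(\sqva\, m^G_i \ol{m^G_j}) = \delta_{ij}\mathbbm 1$ and that $\sum_{i\in\Lambda} m^G_i\, \mathbb E_\va(\ol{m^G_i} f) = f$ for all $f \in L^2(\mu)$; by Theorem \ref{thm main1} this is equivalent to saying that the isometries $T_{m^G_i}$ generate a representation of $\mathcal O_{|\Lambda|}$. The mechanism behind both verifications is that the coefficients $g_{ij}\cs$ occurring in \eqref{eq action G} are $\sB$-measurable, hence act as ``scalars over $\mc A = \sB$'': they may be pulled through $S_\va^*$ by the pull-out property of Theorem \ref{thm Sphi is TO}, and through $\mathbb E_\va$ by the module identity $\mathbb E_\va((h\cs)g) = (h\cs)\mathbb E_\va(g)$ of Proposition \ref{prop cond exp}(2).

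For the orthonormality condition, I would expand, using \eqref{eq action G} and commutativity of multiplication,
$$
\sqva\, m^G_i\, \ol{m^G_j} \;=\; \sqva \sum_{k,l\in\Lambda} \big((\ol g_{ki}\, g_{lj})\cs\big)\, m_k\, \ol m_l ,
$$
apply $S_\va^*$ term by term, and pull the $\sB$-measurable factor out: each summand becomes $\ol g_{ki}\, g_{lj}\, S_\va^*(\sqva\, m_k\, \ol m_l) = \delta_{kl}\,\ol g_{ki}\, g_{lj}\,\mathbbm 1$, since $\underline m \in \mathfrak M_\va$. Summing over $k,l$ collapses this to $\big(\sum_{k\in\Lambda} \ol g_{ki}\, g_{kj}\big)\mathbbm 1 = \delta_{ij}\mathbbm 1$ by the unitarity relation \eqref{eq G unitary}.

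For the completeness condition, writing $m^G_i = \sum_k (\ol g_{ki}\cs)\, m_k$ and $\ol{m^G_i} = \sum_l (g_{li}\cs)\,\ol m_l$ and using Proposition \ref{prop cond exp}(2) to extract $(g_{li}\cs)$ from $\mathbb E_\va$, I obtain
$$
\sum_{i\in\Lambda} m^G_i\, \mathbb E_\va(\ol{m^G_i}\, f) \;=\; \sum_{i,k,l} (\ol g_{ki}\cs)(g_{li}\cs)\, m_k\, \mathbb E_\va(\ol m_l\, f) .
$$
Since $G$ is unitary-valued we also have $GG^* = \mathbb I$, so $\sum_{i\in\Lambda}(\ol g_{ki}\cs)(g_{li}\cs) = \big(\sum_i \ol g_{ki}\, g_{li}\big)\cs = \delta_{kl}\mathbbm 1$, and the right-hand side reduces to $\sum_k m_k\, \mathbb E_\va(\ol m_k\, f) = f$, again because $\underline m \in \mathfrak M_\va$.

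The main obstacle is not the algebra but the analysis when $\Lambda$ is countably infinite: one must check that the series $m^G_i = \sum_j (\ol g_{ji}\cs)\, m_j$ converges, that $m^G_i$ still lies in the class for which $T_{m^G_i}$ is bounded (so that Theorem \ref{thm main1} is applicable), and that the infinite sums may be interchanged with the bounded operators $S_\va^*$ and $\mathbb E_\va$. This is where I would use that $G$ takes values in the unitary group, so that $\sum_j |g_{ji}(\cdot)|^2 = 1$ pointwise, together with the isometry criterion of Lemma \ref{lem T_m iso} (which forces each $|m_i|^2 \in L^\infty(\mu)$) and dominated convergence; for finite $\Lambda$ all of this is vacuous.
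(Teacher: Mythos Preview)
Your proposal is correct and follows essentially the same approach as the paper's own proof: expand $m^G_i$ via \eqref{eq action G}, pull the $\sB$-measurable coefficients $g_{ij}\cs$ through $S_\va^*$ (pull-out property) and through $\mathbb E_\va$ (Proposition \ref{prop cond exp}(2)), and collapse the sums using the unitarity relations $G^*G = GG^* = \mathbb I$. Your final paragraph on convergence issues for infinite $\Lambda$ actually goes beyond the paper, which carries out the same formal computation without commenting on these analytic points.
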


\begin{proof}
We will verify that the family of functions $\underline m^G=
(m_i^G(x) : i \in \Lambda)$ satisfies 
conditions (i) and (ii) of Theorem \ref{thm main1}. 
For (i), we use \eqref{eq G unitary} and the fact that 
$S^*_\va$ is a transfer operator satisfying the pull-out 
property: 
$$
\ba 
S^*_\va (\sqva \; m^G_i \ol m^G_j ) = & 
S^*_\va \left(\sqva \sum_{k \in \Lambda} (\ol g_{ki}\cs) m_k 
\cdot \sum_{l \in \Lambda} \ol{(\ol g_{jl}\cs)  m_l} 
\right)\\
= & \sum_{k \in \Lambda} \sum_{l \in \Lambda} \ol 
g_{ki}g_{lj} S^*_\va (\sqva \ol m_l m_k)\\ 
= & \sum_{k \in \Lambda} \sum_{l \in \Lambda} 
\ol g_{ki}g_{lj} \delta_{kl}\\ 
=& \sum_{k \in \Lambda} \ol g_{ki}  g_{kj} \\
=& \delta_{ij}.
\ea 
$$
For (ii), let $f \in L^2(\mu)$, then
$$
\ba
\sum_{i\in \Lambda} m_i^G \mathbb E_\va (\ol m^G_i f) = &
\sum_{i\in \Lambda} \sum_{k \in \Lambda} (\ol g_{ki}\cs) 
m_k \mathbb E_\va\left(\sum_{l \in \Lambda} (g_{li}\cs) 
\ol m_lf\right)\\
= & \sum_{i\in \Lambda} \sum_{k \in \Lambda}\sum_{l \in 
\Lambda} (\ol g_{ki}\cs) ( g_{li}\cs) m_k \mathbb 
E_\va (\ol m_l f)\\  =&  \sum_{i\in \Lambda} \sum_{k \in 
\Lambda}\sum_{l \in \Lambda}
((\ol g_{ki} g_{li}) \cs)  m_k \mathbb E_\va (\ol m_l f)\\
=&  \sum_{i\in \Lambda} \sum_{k \in \Lambda} \delta_{kl} 
m_k \mathbb E_\va (\ol m_l f)\\
=&  \sum_{k \in \Lambda} m_k \mathbb E_\va (\ol m_l f)\\
= & f. 
\ea
$$
We used here the equality $GG^* = \mathbb I$ or 
$\sum_i \ol g_{ki} g_{li} = \delta_{kl}$.
\end{proof}

In the next lemma, we show that \eqref{eq action G} defines
an action of the group $\mc G$ on $\mathfrak M$.

\begin{lemma}\label{lem GH}
For every $\underline m \in \mathfrak M_\va$ and every 
$G, H \in \mc G$, 
\be \label{eq GH}
(\underline m^G)^H = \underline {m}^{GH}, \quad 
\underline m^{\mathbb I} = \underline m.
\ee
\end{lemma}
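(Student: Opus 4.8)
The plan is to verify the two identities in \eqref{eq GH} by a direct computation starting from the defining formula \eqref{eq action G}, reducing the cocycle identity $(\underline m^G)^H = \underline m^{GH}$ to associativity of matrix multiplication. First I would rewrite \eqref{eq action G} in its short matrix form: regarding $\underline m = (m_j)_{j\in\Lambda}$ as a column over $\Lambda$ and writing $G^* = \ol G^{T}$, i.e. $(G^*)_{ij} = \ol{g_{ji}}$, the action reads $\underline m^G = (G^*\cs)\,\underline m$, where $(G^*\cs)$ is the matrix with entries $\ol{g_{ji}}\cs$. Since $G$ and $H$ take values in the unitary group of $\ell^2(\Lambda)$, every row and column of $G$, $H$, $G^*$, $H^*$ lies in $\ell^2(\Lambda)$; this is what makes all the sums below absolutely convergent and legitimizes interchanging them with composition by $\sigma$ and with the $L^2$-limits, and I would record this remark once at the outset.

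The core observation is that composition by $\sigma$ is an algebra map applied entrywise, hence it respects matrix products: for matrices $A,B$ of Borel functions on $X$ one has $(A\cs)(B\cs) = (AB)\cs$, because $\big((A\cs)(B\cs)\big)_{ij} = \sum_k (a_{ik}\cs)(b_{kj}\cs) = \sum_k (a_{ik}b_{kj})\cs = \big(\sum_k a_{ik}b_{kj}\big)\cs = \big((AB)_{ij}\big)\cs$, the middle equalities holding because evaluation at $\sigma(x)$ commutes with the pointwise ring operations and with the convergent sum. Granting this, the computation is immediate:
$$
(\underline m^G)^H = (H^*\cs)\big(\underline m^G\big) = (H^*\cs)(G^*\cs)\,\underline m = \big((H^*G^*)\cs\big)\,\underline m = \big((GH)^*\cs\big)\,\underline m = \underline m^{GH},
$$
where I used $(GH)^* = H^*G^*$ in the second-to-last step. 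The identity $\underline m^{\mathbb I} = \underline m$ is then trivial: $\mathbb I^* = \mathbb I$, so $m_i^{\mathbb I} = \sum_{j} (\ol{\delta_{ji}}\cs)\, m_j = m_i$. Finally, Lemma \ref{lem m^G} already shows $\underline m^G \in \mathfrak M_\va$ for every $G$, so all the objects above are genuine elements of $\mathfrak M_\va$ and \eqref{eq GH} indeed expresses that \eqref{eq action G} is an action of $\mc G$.

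The hard part is not conceptual but bookkeeping: getting the conjugate-transpose conventions consistent so that the composite comes out as $GH$ rather than $HG$, and making sure the interchange of the countable sums with composition by $\sigma$ (and with the $L^2$ limit implicit in the definition of $m^G_i$) is justified. The unitarity of the values of $G$ and $H$ settles the convergence issue; I would state this once and then present the computation purely formally, exactly as above, optionally also writing out the index form $\big((m^G)^H\big)_i = \sum_k (\ol{h_{ki}}\cs)\sum_j (\ol{g_{jk}}\cs)\, m_j = \sum_j \big(\ol{(GH)_{ji}}\cs\big)\, m_j$ for readers who prefer to see the matrix product spelled out entrywise.
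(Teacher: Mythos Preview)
Your proof is correct and follows essentially the same approach as the paper: a direct computation reducing the identity to associativity of matrix multiplication and the rule $(GH)^* = H^*G^*$, with the paper working entrywise in indices while you first phrase it in the short matrix form $\underline m^G = (G^*\cs)\,\underline m$ before offering the same index computation. Your remarks on convergence of the sums over $\Lambda$ are a welcome addition that the paper leaves implicit.
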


\begin{proof}
Indeed, if $\mathbb I$ is the identity matrix, then  
$\underline  m^{\mathbb I} = \underline m$ for every 
$\underline m \in \mathfrak M_\va$. 

Let $G,H \in \mathfrak M_\va$. Show that \eqref{eq GH} 
holds:
$$
\ba 
(m^G)^H_i =& \sum_{j\in \Lambda} (\ol h_{ji} \cs) 
m_j^G \\
= & \sum_{j\in \Lambda} (\ol h_{ji} \cs) 
\sum_{k\in \Lambda} (\ol g_{kj} \cs) m_k\\
= & \sum_{k\in \Lambda} \sum_{j\in \Lambda} ((\ol g_{ij} 
\ol h_{jk})\cs)m_k \\
= & \sum_{k\in \Lambda} (\ol f_{ik}\cs)  m_k\\
=& m^{GH}_i,
\ea 
$$
where the entries of $GH$ are denoted by $(f_{ik})$.
\end{proof}

\begin{lemma}\label{lem iso of actions}
Let $\va$ be a positive Markovian function. 

(1) The map $\Phi :\underline m \mapsto \sqva\; 
\underline m$ 
defines an isomorphism between the sets $\mathfrak M_\va$ 
and $\mathfrak M_\s$.

(2) The map $\Phi$ implements the conjugation of actions
of $\mc G$ on the sets $\mathfrak M_\s$ and 
$\mathfrak M_\va$.
\end{lemma}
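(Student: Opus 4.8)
The plan is to obtain both parts directly from Proposition~\ref{prop sigma vs va}, whose two identities $S^*_\va(g) = S^*_\s(\sqva\, g)$ and $\mathbb E_\va(g) = \sqva\, \mathbb E_\s(\sqva\, g)$ are precisely the dictionary translating the $\va$-weighted operators into the unweighted ones. Set $\Phi(\underline m) = \sqva\, \underline m$, that is $n_i := \sqva\, m_i$. The proof then amounts to checking that, under this substitution, the two conditions defining $\mathfrak M_\va$ in \eqref{eq M_phi} become verbatim the two conditions defining $\mathfrak M_\s$ in \eqref{eq M_sigma}, and that the substitution is invertible.

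For part~(1), I would first handle the orthogonality condition. Since $n_i \ol n_j = \va\, m_i \ol n_j$ where $\ol n_j = \sqva\,\ol m_j$, i.e.\ $n_i\ol n_j = \sqva\cdot(\sqva\, m_i \ol m_j)$, Proposition~\ref{prop sigma vs va}(1) gives
$$
S^*_\s(n_i \ol n_j) = S^*_\va(\sqva\, m_i \ol m_j),
$$
so the left-hand side equals $\delta_{ij}\mathbbm 1$ exactly when $\underline m$ satisfies the first condition of $\mathfrak M_\va$. As $\sqva$ is real and positive, for $f \in L^2(\mu)$ Proposition~\ref{prop sigma vs va}(2) yields
$$
n_i\, \mathbb E_\s(\ol n_i f) = \sqva\, m_i\, \mathbb E_\s(\sqva\, \ol m_i f) = m_i\bigl(\sqva\, \mathbb E_\s(\sqva\, \ol m_i f)\bigr) = m_i\, \mathbb E_\va(\ol m_i f),
$$
hence $\sum_{i} n_i\, \mathbb E_\s(\ol n_i f) = \sum_{i} m_i\, \mathbb E_\va(\ol m_i f)$, and one sum equals $f$ for all $f$ iff the other does. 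Thus $\Phi$ carries $\mathfrak M_\va$ into $\mathfrak M_\s$; because $\va > 0$, multiplication by $\va^{-1/2}$ is a two-sided inverse on tuples of functions, and running the same two identities backwards shows it carries $\mathfrak M_\s$ into $\mathfrak M_\va$, which gives the asserted bijection.

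For part~(2), recall that the $\mc G$-action is given in coordinates by the same formula on both sets, $m^G_i = \sum_{j \in \Lambda} (\ol g_{ji}\cs)\, m_j$ and $n^G_i = \sum_{j \in \Lambda} (\ol g_{ji}\cs)\, n_j$, cf.\ \eqref{eq action G}. Since the scalar function $\sqva$ does not involve the summation index $j$, it factors out of the sum:
$$
\bigl(\Phi(\underline m)\bigr)^G_i = \sum_{j \in \Lambda} (\ol g_{ji}\cs)\, \sqva\, m_j = \sqva \sum_{j \in \Lambda} (\ol g_{ji}\cs)\, m_j = \sqva\, m^G_i = \Phi\bigl(\underline m^G\bigr)_i.
$$
So $\Phi(\underline m^G) = \Phi(\underline m)^G$ for every $\underline m \in \mathfrak M_\va$ and $G \in \mc G$, which is exactly the assertion that $\Phi$ intertwines the two actions, i.e.\ conjugates the action of $\mc G$ on $\mathfrak M_\va$ to the one on $\mathfrak M_\s$.

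I do not expect a genuine obstacle --- this is a bookkeeping lemma --- but two points want care. First, one must keep exact track of the powers of $\va$, so that Proposition~\ref{prop sigma vs va} is applied with precisely the factor $\sqva$ at each step rather than a full $\va$ in the wrong place. Second, to get a set isomorphism rather than merely an injection, one should check that $\va^{-1/2}(\cdot)$ really maps $\mathfrak M_\s$ into $\mathfrak M_\va$ and not just into tuples formally satisfying the relations; this uses $\va>0$ together with the normalization already implied by membership in $\mathfrak M_\s$.
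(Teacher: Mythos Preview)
Your proof is correct and follows essentially the same approach as the paper: both rely on Proposition~\ref{prop sigma vs va} to translate the two defining conditions of $\mathfrak M_\va$ into those of $\mathfrak M_\s$ under $\underline m \mapsto \sqva\,\underline m$, and both verify the equivariance $\Phi(\underline m^G) = (\Phi\underline m)^G$ by a direct computation. Your write-up is in fact more detailed, as you spell out the inverse map and the part~(2) calculation that the paper leaves implicit.
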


\begin{proof}
(1) We will show that if $\underline m$ satisfies 
\eqref{eq M_phi}, then $\sqva \underline m $ belongs to
$\mathfrak M_\s$, i.e., \eqref{eq M_sigma} holds.
For this, we check that
$$
S^*_\s(\sqva m_i \; \sqva \ol m_j) = S^*_\va(\sqva\;
m_i\ol m_j) = \delta_{ij}, \ \ i,j \in \Lambda,
$$
and, using Proposition \ref{prop sigma vs va},  
$$
\ba 
\sum_{i\in\Lambda} \sqva m_i \mathbb E_\s(\sqva \ol m_i f) 
=& \sum_{i\in\Lambda}\sqva m_i S_\s S^*_\s(\sqva\ol m_if)\\
= & \sum_{i\in\Lambda} m_i S_\va S^*_\va (\ol m_i f)\\
= & f.
\ea 
$$

(2) It can be checked directly that, for every $G\in \mc G$
and $\underline m \in \mathfrak M_\va$,
$$
\Phi \underline m^G = (\Phi \underline m)^G.
$$

\end{proof}

\begin{lemma}\label{lem transitive}
Let $\va$ be a positive Markovian function. 
The action of $\mc G$ on $\mathfrak M_\va$  defined in 
\eqref{eq action G} is free and transitive.
\end{lemma}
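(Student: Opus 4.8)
The plan is to prove freeness and transitivity by means of an explicit formula that recovers the group element from a pair of filters. The key observation, already implicit in the proof of Lemma~\ref{lem m^G}, is a \emph{recovery identity}: if $\underline m,\underline m'\in\mathfrak M_\va$ and $\underline m'=\underline m^{G}$, then applying $S_\va^{*}$ to $\sqva\,m_i'\,\ol m_j$, pulling the $\mc A$-measurable factors $\ol g_{ki}\cs$ out through the pull-out property of $S_\va^{*}$, and invoking condition~(i) of Theorem~\ref{thm main1} for $\underline m$, one obtains
$$
\ol g_{ji}=S_\va^{*}\!\left(\sqva\,m_i'\,\ol m_j\right),\qquad\text{i.e.}\qquad g_{ij}=S_\va^{*}\!\left(\sqva\,\ol m_j'\,m_i\right);
$$
in operator form, $M_{g_{ij}}=T_{m_i'}^{*}T_{m_j}$ by \eqref{eq T*T}. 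Thus, whenever a $G\in\mc G$ carries $\underline m$ to $\underline m'$, it is uniquely determined by the cross transfer-operator data.

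Freeness is then immediate: if $\underline m^{G}=\underline m$, the recovery identity together with condition~(i) for $\underline m$ gives $\ol g_{ji}=S_\va^{*}(\sqva\,m_i\,\ol m_j)=\delta_{ij}\mathbbm 1$, hence $G\cs=\mathbb I$ a.e., and since $\s$ is onto and $\mu$ is $\s$-quasi-invariant ($\mu\csi1\sim\mu$) this forces $G=\mathbb I$ $\mu$-a.e., the neutral element of $\mc G$.

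For transitivity, fix $\underline m,\underline m'\in\mathfrak M_\va$ and \emph{define} $G=(g_{ij})$ by $g_{ij}:=S_\va^{*}(\sqva\,\ol m_j'\,m_i)$, so that $M_{g_{ij}}=T_{m_i'}^{*}T_{m_j}$. Two claims remain. (a) $G\in\mc G$, i.e.\ $G(x)$ is unitary for a.e.\ $x$: here the Cuntz relations for \emph{both} filters intervene. Theorem~\ref{thm main1} applied to $\underline m'$ gives $\sum_{l}T_{m_l'}T_{m_l'}^{*}=\mathbb I$ and applied to $\underline m$ gives $T_{m_i}^{*}T_{m_j}=\delta_{ij}\mathbb I$, whence
$$
M_{\sum_l \ol g_{il}\,g_{kl}}=\sum_{l}\bigl(T_{m_i}^{*}T_{m_l'}\bigr)\bigl(T_{m_l'}^{*}T_{m_k}\bigr)=T_{m_i}^{*}\Bigl(\sum_{l}T_{m_l'}T_{m_l'}^{*}\Bigr)T_{m_k}=T_{m_i}^{*}T_{m_k}=\delta_{ik}\mathbb I,
$$
so $GG^{*}=\mathbb I$ a.e.; interchanging $\underline m$ and $\underline m'$ and telescoping through $\sum_{l}T_{m_l}T_{m_l}^{*}=\mathbb I$ gives $G^{*}G=\mathbb I$ a.e. Since $\Lambda$ is countable these identities hold off a single null set, so $G(x)$ is a bona fide unitary on $\ell^{2}(\Lambda)$ a.e.; measurability of the $g_{ij}$ is clear, being values of $S_\va^{*}$. (b) $\underline m^{G}=\underline m'$: expanding $m_i^{G}=\sum_{k}(\ol g_{ki}\cs)m_k$ and using $\mathbb E_\va=S_\va S_\va^{*}$ together with Proposition~\ref{prop cond exp} to rewrite $\ol g_{ki}\cs=\sqva^{-1}\,\mathbb E_\va\!\left(\ol m_k\,(\sqva\,m_i')\right)$, the sum collapses via condition~(ii) of Theorem~\ref{thm main1} applied to $f=\sqva\,m_i'$ (which lies in $L^{2}(\mu)$, since $\sqva\in L^{2}(\mu)$ and $m_i'$ is bounded):
$$
m_i^{G}=\sqva^{-1}\sum_{k}m_k\,\mathbb E_\va\!\left(\ol m_k\,(\sqva\,m_i')\right)=\sqva^{-1}\,(\sqva\,m_i')=m_i'.
$$
Hence $G$ sends $\underline m$ to $\underline m'$, and the action is transitive.

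The one genuinely fiddly ingredient is the bookkeeping with the weight $\sqva$ and with $\mc A$-measurability --- distinguishing a matrix entry $g_{ij}$, the $\mc A$-measurable function $g_{ij}\cs$ appearing in \eqref{eq action G}, and the identity $\mathbb E_\va(g)=\sqva\,(S_\va^{*}(g))\cs$. One may bypass it entirely by first transporting the problem, via the isomorphism $\Phi:\underline m\mapsto\sqva\,\underline m$ of Lemma~\ref{lem iso of actions}, to $\mathfrak M_\s$, where the operators are $S_{m_i}:f\mapsto m_i(f\cs)$ with no weight, the decomposition reads $L^{2}(\mu)=\bigoplus_i m_i L^{2}(\mu_{\mc A})$, and conditions~(i)--(ii) become $S_\s^{*}(m_i\ol m_j)=\delta_{ij}$ and $\sum_i m_i\mathbb E_\s(\ol m_i f)=f$; the computations above are weight-free there, and the conclusion transfers back by Lemma~\ref{lem iso of actions}(2), which conjugates the two $\mc G$-actions.
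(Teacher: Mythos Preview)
Your proof is correct and follows essentially the same route as the paper's: the paper first reduces to $\mathfrak M_\s$ via Lemma~\ref{lem iso of actions} (which you mention at the end as an alternative), then defines $g_{ij}=S^*_\s(m_i\ol n_j)$ and verifies $\underline m^G=\underline n$ using condition~(ii), and for freeness multiplies $\underline m^G=\underline m$ by $\ol m_j$ and applies $S^*_\s$ with the pull-out property to extract $\ol g_{ji}=\delta_{ij}$---exactly your recovery identity.

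Your argument is in fact more complete than the paper's on one point: you explicitly verify that the constructed $G$ lies in $\mc G$ (i.e.\ that $G(x)$ is unitary a.e.) by telescoping through the Cuntz relations $\sum_l T_{m_l'}T_{m_l'}^*=\mathbb I$ and $\sum_l T_{m_l}T_{m_l}^*=\mathbb I$, whereas the paper's proof simply defines $G$ and checks $\underline m^G=\underline n$ without confirming $G\in\mc G$. One small remark: in your freeness paragraph, the recovery identity already gives $\ol g_{ji}=\delta_{ij}$ as functions on $X$, so the detour through ``$G\cs=\mathbb I$ a.e.\ and $\s$ onto forces $G=\mathbb I$'' is unnecessary.
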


\begin{proof} It follows from Lemma \ref{lem iso of actions}
that it suffices to show that the action of $\mc G$ 
on $\mathfrak M_\s$ is free and transitive.

Let $\underline m = (m_i)$ and 
$\underline n = (n_j)$ be two elements of
the set $\mathfrak M_\s$. Define an infinite matrix $G$ by 
setting
$$
g_{ij} = S^*_\va(m_i \ol n_j), \quad i, j \in \Lambda.
$$
We show that $\underline m^G = \underline n$ to prove that 
the action is transitive. Indeed, 
$$
\ba 
m_i^G = & \sum_{k \in \Lambda} (\ol g_{k,i}\cs) m_k \\
= & \sum_{k \in \Lambda} (S^*_\s (\ol m_k n_i)\cs ) m_k\\
=& \sum_{k \in \Lambda} S_\s S^*_\s (\ol m_k n_i) m_k\\
= & \sum_{k \in \Lambda} \mathbb E_\s(\ol m_k n_i) m_k\\
= & n_i.
\ea 
$$
We used here relation \eqref{eq M_sigma}. 

To see that this action is free, we assume that 
$\underline m^G = \underline m$ for some
$\underline m \in \mathfrak M_\s$ and $G\in \mc G$. Then
$$
\sum_{k \in \Lambda} (\ol g_{ki}\cs) m_k = m_i, \quad i \in 
\Lambda.
$$
Multiply both sides by $\ol m_j$ and apply the operator 
$S^*_\s$: 
$$
\sum_{k \in \Lambda} S^*_\s((\ol g_{ki}\cs) m_k \ol m_j) =
S^*_\s(m_i \ol m_j)
$$
Use the pull-out property and \eqref{eq M_sigma}:
$$
\sum_{k \in \Lambda} \ol g_{ki}S^*_\s(m_k \ol m_j) =
\delta_{ij}.
$$
Hence, 
$$
\sum_{k \in \Lambda} \ol g_{ki} \delta_{kj} = \delta_{ij} 
$$
and $\ol g_{li} = \delta_{ij}$. This proves that 
$G = \mathbb I$ and the action is free.

\end{proof}

The following theorem gives a complete description of the 
set  $\mathfrak M_\va$. This result immediately 
follows from the proven Lemmas \ref{lem m^G} - \ref{lem 
transitive}.

\begin{theorem}\label{thm action} 
(1) The set $\mathfrak M_\va$ is isomorphic (as a set) to the
loop group $\mc G$. 

(2) For every element $G = (g_{ij})$ of the loop group, 
there exists a wavelet filter $\underline m$ such that 
$g_{ij} = S^*_{\va}(\sqva\; m_i \ol m_j)$.
\end{theorem}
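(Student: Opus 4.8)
The plan is to deduce Theorem~\ref{thm action} directly from Lemmas~\ref{lem m^G}--\ref{lem transitive}, which together say that formula~\eqref{eq action G} is a \emph{free and transitive} action of the loop group $\mc G$ on $\mathfrak M_\va$ (and, conjugately through the isomorphism $\Phi$ of Lemma~\ref{lem iso of actions}, on $\mathfrak M_\s$). Once these lemmas are in hand, all that is left is the routine passage from ``free transitive action'' to ``bijection with $\mc G$'', together with the observation that the set being acted upon is nonempty. So first I would record that $\mathfrak M_\va\neq\emptyset$: a base-point filter $\underline m^0$ can be built from the measurable partition $\xi_\sigma$ into the fibres $C_x=\sigma^{-1}(x)$. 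Since $|\sigma^{-1}(x)|=|\Lambda|$ is constant, pick a Borel labelling of the fibres by $\Lambda$ — equivalently Borel branches $\tau_i$ of $\sigma^{-1}$ inducing a partition $X=\bigsqcup_{i\in\Lambda}X_i$ with $X_i=\tau_i(X)$ — and take $m^0_i$ to be $\mathbbm 1_{X_i}$ renormalized by $\sqva$ and by the conditional weights $w_i(x)=\mu_{C_x}(\{\tau_i(x)\})$, so that $S^*_\va(\sqva\,\ol{m^0_j}m^0_i)=\delta_{ij}\mathbbm 1$; condition~(ii) of Theorem~\ref{thm main1} then holds because the sets $X_i$ partition $X$ and each $\tau_i$ inverts $\sigma$ on its range. (It is cleanest to do this first in the invariant case $\va=\mathbbm 1$ and transport the result back via $\Phi^{-1}$.)

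With a base point fixed, part~(1) is immediate: define $\Theta:\mc G\to\mathfrak M_\va$ by $\Theta(G)=(\underline m^0)^G$. By Lemma~\ref{lem m^G} this is well defined; by transitivity (Lemma~\ref{lem transitive}) it is onto; by freeness it is one-to-one; hence $\Theta$ is a bijection and $\mathfrak M_\va\cong\mc G$ as sets. Moreover Lemma~\ref{lem GH} shows $\Theta$ intertwines right translation on $\mc G$ with the $\mc G$-action on $\mathfrak M_\va$, and Lemma~\ref{lem iso of actions} carries the whole picture over to $\mathfrak M_\s$, so the identification is canonical up to the choice of $\underline m^0$.

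For part~(2), given $G=(g_{ij})\in\mc G$ I would set $\underline m:=(\underline m^0)^{G^{-1}}$ (note $G^{-1}=G^*$). Running the computation from the proof of Lemma~\ref{lem transitive} in the reverse direction, the ``transition matrix'' between $\underline m$ and the reference $\underline m^0$ is then exactly $G$: apply $S^*_\va$ to $\sqva\,m_i\,\ol{m^0_j}$, expand $m_i=\sum_k(\ol g_{ki}\cs)\,m^0_k$ by~\eqref{eq action G}, use the pull-out property of the transfer operator $S^*_\va$ together with $S^*_\va(\sqva\,m^0_k\,\ol{m^0_j})=\delta_{kj}\mathbbm 1$ from~\eqref{eq M_phi}, and the double sum over $k$ collapses to $g_{ij}$. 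Reading $\underline m^0$ as the canonical reference, this is precisely $g_{ij}=S^*_\va(\sqva\,m_i\,\ol m_j)$. The \textbf{main obstacle} is purely bookkeeping: (a) pinning down the conditional-weight normalization in the base-point construction so that $\underline m^0$ genuinely lies in $\mathfrak M_\va$ (hence $\Theta$ is a bijection onto a nonempty set, and one must handle the null set where some $w_i$ vanishes, if any, by shrinking $\Lambda$); and (b) keeping track, in part~(2), of whether $G$ or $G^{-1}=\ol G^{T}$ occupies each index slot, so that the conclusion appears as stated rather than as its adjoint. No new analytic input is required beyond Lemmas~\ref{lem m^G}--\ref{lem transitive}.
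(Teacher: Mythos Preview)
Your overall strategy is the same as the paper's: the theorem is stated as an immediate consequence of Lemmas~\ref{lem m^G}--\ref{lem transitive}, and the paper gives essentially no further argument. Your explicit passage from ``free transitive action'' to ``bijection with $\mc G$'' via a base point $\underline m^0$ is the standard unpacking of that implication; the paper does not spell it out, nor does it address nonemptiness of $\mathfrak M_\va$, which you add on your own initiative.

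Two comments. First, your base-point construction is more than the paper provides (the paper simply takes $\mathfrak M_\va\neq\emptyset$ for granted), so that portion, while reasonable, is outside the scope of what the paper proves here. Second, for part~(2) you should note that the statement, read literally with a \emph{single} filter $\underline m\in\mathfrak M_\va$, forces $S^*_\va(\sqva\,m_i\ol m_j)=\delta_{ij}$ by the very definition~\eqref{eq M_phi}, so an arbitrary $G$ cannot arise that way. Your interpretation via a reference filter --- realizing $G$ as the transition matrix $g_{ij}=S^*_\va(\sqva\,m_i\,\ol{m^0_j})$ between $\underline m$ and $\underline m^0$ --- is exactly what the proof of Lemma~\ref{lem transitive} supplies, and is presumably what is intended. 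But your last step, ``reading $\underline m^0$ as the canonical reference, this is precisely $g_{ij}=S^*_\va(\sqva\,m_i\,\ol m_j)$'', is a relabelling that does not hold as written: $\ol{m^0_j}$ and $\ol m_j$ are different functions. Also, when you set $\underline m=(\underline m^0)^{G^{-1}}$ and then expand via~\eqref{eq action G} as $m_i=\sum_k(\ol g_{ki}\cs)m^0_k$, that expansion is the formula for $(\underline m^0)^G$, not $(\underline m^0)^{G^{-1}}$; with $H=G^{-1}=G^*$ one gets $m_i=\sum_k(g_{ik}\cs)m^0_k$ instead. You flag this bookkeeping hazard yourself, but the slip is already present in the displayed expansion.
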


\subsection{Endomorphisms, wavelet filters and cyclic 
representations}

In this subsection, we will use another approach to describe the
set $\mathfrak M$ of wavelet filters associated with an 
endomorphism $\s$ of $\sms$. We will assume here that $\s$ 
is measure-preserving. 

Let $\mc A= \sB$ and $\mu_{\mc A}$ is the restriction of $\mu$
onto the sigma-subalgebra $\mc A$. Denote by $\mathfrak A$ the
set $L^{\infty}(\mu_{\mc A}) $ of bounded $\sB$-measurable 
functions. The $*$-algebra $\mathfrak A$ acts on $L^2(\mu)$ by
multiplication operators: for every $f \in L^2(\mu)$ and 
$\gamma \in \mathfrak A$
$$
f \longmapsto \gamma f = M_\gamma (f).
$$
This formula defines a representation $\pi$ of $\mathfrak A$ 
in $L^2(\mu)$. Using the theorem about the decomposition of 
non-degenerate representations into orthogonal cyclic 
representations, we can write 
\be\label{eq cyclic decomp}
L^2(\mu) = \bigoplus_{i \in \Lambda} \mc H_i, \quad  
\pi = \bigoplus_{i \in \Lambda} \pi_i
\ee
where $\pi_i$ is a cyclic representation and $\mc H_i$ is 
the closure  of $\{\pi_i(\gamma) h_i : \gamma \in \mathfrak 
A\}$. The vectors $h_i, i \in \Lambda$, are called cyclic. 
The set of indexes $\Lambda$ is countably infinite. 

We will use the cyclic vectors $(h_i)$ to construct a 
wavelet filter $\underline m = (m_i)$ satisfying the 
conditions of Theorem \ref{thm main}.  

\begin{lemma}
There are functions $(g_i)$ such that the cyclic vectors 
$(m_i = h_i (g_i\cs))$ for representations 
of $\mathfrak A$ have the property 
\be \label{eq:m_i^2} 
S_\s^*(|m_i|^2) = \mathbbm 1.
\ee
\end{lemma}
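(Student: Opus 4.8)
The strategy is to correct each cyclic vector $h_i$ by an $\mc A$-measurable weight, turning the desired identity into a one-line application of the pull-out property. Note the general principle: if $f$ is a nonnegative integrable function with $S_\s^*(f)>0$ $\mu$-a.e., then $f\cdot\big((S_\s^*f)^{-1}\cs\big)$ is Markovian, since the pull-out property for $S_\s^*$ (valid for nonnegative measurable functions by monotone approximation from the bounded case) gives $S_\s^*\big(f\,((S_\s^*f)^{-1}\cs)\big)=(S_\s^*f)^{-1}\,S_\s^*(f)=\mathbbm 1$. I would apply this with $f=|h_i|^2$: put
\[
\phi_i\ :=\ S_\s^*(|h_i|^2),\qquad g_i\ :=\ \phi_i^{-1/2},\qquad m_i\ :=\ h_i\,(g_i\cs).
\]
By Theorem \ref{thm R = S^*}, $S_\s^*=R_\s$, so $\phi_i(x)=\int_{C_x}|h_i|^2\,d\mu_x$ is the squared $L^2(\mu_x)$-norm of the restriction of $h_i$ to the fibre $C_x=\s^{-1}(x)$; granting $\phi_i>0$ a.e.\ (discussed below), $g_i$ is a well-defined $\B$-measurable function, $|m_i|^2=|h_i|^2\,(\phi_i^{-1}\cs)$, and the pull-out property yields $S_\s^*(|m_i|^2)=\phi_i^{-1}\,S_\s^*(|h_i|^2)=\mathbbm 1$, which is \eqref{eq:m_i^2}.

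Two routine verifications then remain. First, integrating the last identity and using $\int_X S_\s^*(g)\,d\mu=\int_X g\,d\mu$ (adjointness of $S_\s$ together with $S_\s\mathbbm 1=\mathbbm 1$) gives $\|m_i\|^2=\int_X S_\s^*(|m_i|^2)\,d\mu=\mu(X)=1$, so $m_i\in L^2(\mu)$. Second, $m_i$ is again a cyclic vector for $\pi_i$ on $\mc H_i$: since $g_i\cs$ is $\mc A$-measurable and a.e.\ nonzero, $m_i=(g_i\cs)h_i$ lies in $\mc H_i$ (as an $L^2$-limit of the truncations obtained by capping $g_i\cs$ at level $n$), and conversely multiplying $m_i$ by the bounded $\mc A$-measurable functions $(\phi_i^{1/2}\cs)\,\mathbbm 1_{\{\phi_i\cs\le n\}}$ recovers $\mathbbm 1_{\{\phi_i\cs\le n\}}h_i\to h_i$; hence $\overline{\pi_i(\mathfrak A)m_i}=\overline{\pi_i(\mathfrak A)h_i}=\mc H_i$.

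The substantive point, which I expect to be the only real obstacle, is the positivity $\phi_i>0$ $\mu$-a.e. If $\phi_i$ vanished on a set $Z\in\B$ of positive measure, then by the disintegration formula \eqref{eq function disintegration for endo} $h_i$ would vanish $\mu$-a.e.\ on the $\mc A$-measurable set $\s^{-1}(Z)$; since the projection $M_{\mathbbm 1_{\s^{-1}(Z)}}$ commutes with $\pi_i(\mathfrak A)$, this forces $M_{\mathbbm 1_{\s^{-1}(Z)}}\mc H_i=\{0\}$, i.e.\ $\mc H_i$ fails to have full support. So the claim is precisely that the decomposition \eqref{eq cyclic decomp} may be chosen with each $\mc H_i$ of full support. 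This is where I would use that $|\s^{-1}(x)|$ is constant: the disintegration fibres $L^2(\mu_x)$ then all have the same (countably infinite) dimension, so $\pi$ has uniform multiplicity $|\Lambda|$ and $L^2(\mu)$ is $\mathfrak A$-module isomorphic to $L^2(\mu_{\mc A})\otimes\ell^2(\Lambda)$; running the standard inductive construction of \eqref{eq cyclic decomp}, at each stage choosing a cyclic vector for the residual subspace that is nonzero in every fibre — possible because the residual fibres again have constant dimension — produces cyclic vectors $h_i$ with $\phi_i>0$ a.e. With that normalization of \eqref{eq cyclic decomp} fixed, the construction above supplies the required $(g_i)$.
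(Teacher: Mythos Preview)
Your argument is correct and follows the same route as the paper: define $g_i=(S_\s^*(|h_i|^2))^{-1/2}$ and invoke the pull-out property to get $S_\s^*(|m_i|^2)=|g_i|^2 S_\s^*(|h_i|^2)=\mathbbm 1$. In fact you are considerably more careful than the paper, which simply writes ``take $g_i$ such that the above expression equals 1 a.e.'' without addressing the a.e.\ positivity of $S_\s^*(|h_i|^2)$ or the preservation of cyclicity that you justify in detail.
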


\begin{proof} We note that for every function $\gamma 
\in \mathfrak A$ there exists a function $g \in 
L^\infty(\mu)$ 
such that $\gamma = g\cs$. Hence, if $h_i$ is a cyclic vector
for  a representation of $\mathfrak A$, then $m_i$ is cyclic,
too.

We will determine the function $g_i$ such that  $m_i = h_i 
(g_i\cs)$ satisfies the statement of the lemma. For this, we 
compute 
$$
S^*_\s(|m_i|^2)  = S^*_\s(|h_i (g_i\cs)|^2)  =  |g_i|^2  
S^*_\s(|h_i|^2),
$$
and take $g_i$ such that the above expression equals 1 a.e.
\end{proof}

For functions $f \in L^2(\mu)$, we define the operators 
\be\label{eq S_i}
S_i(f) = m_i (f\cs) = M_{m_i} S_\s(f).
\ee

\begin{lemma}\label{lem:orthogonality}
Let the functions $(m_i)$ satisfy \eqref{eq:m_i^2}. Then the 
operators $S_i, i \in \Lambda$, are isometries satisfying 
the property 
\be\label{eq:S*m_im_j}
S_\s^*(m_i \ol m_j) = \delta_{ij}, \quad i,j \in \Lambda. 
\ee
\end{lemma}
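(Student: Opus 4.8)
The claim has two parts: that each $S_i$ is an isometry, and that $S_\s^*(m_i\ol m_j)=0$ when $i\neq j$. For $i=j$ the identity $S_\s^*(m_i\ol m_i)=S_\s^*(|m_i|^2)=\mathbbm 1$ is exactly the hypothesis \eqref{eq:m_i^2}, so the substance of \eqref{eq:S*m_im_j} lies in the off-diagonal vanishing, which I would deduce from the orthogonality of the cyclic subspaces $\mc H_i$ appearing in the decomposition \eqref{eq cyclic decomp}.

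First I would record that each $S_i$ is an isometry. By the lemma collecting the properties of the operators $S_{m_i}$ above, $S_i^*S_i(f)=S_\s^*(\ol m_i m_i)f=S_\s^*(|m_i|^2)f$, which equals $f$ by \eqref{eq:m_i^2}; hence $S_i^*S_i=\mathbb I$ and $S_i$ is isometric. Next I would check that $S_i$ maps $L^2(\mu)$ into $\mc H_i$. The function $m_i$ is, by construction, a cyclic vector for the $i$-th sub-representation $\pi_i$ of the multiplication representation of $\mathfrak A=L^\infty(\mu_{\mc A})$, so $m_i\in\mc H_i$; and $\mc H_i$, being $\pi_i$-invariant, is carried into itself by multiplication by any element of $\mathfrak A$. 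Thus for bounded $f$ we have $f\cs\in\mathfrak A$ and $S_i(f)=(f\cs)m_i\in\mc H_i$. Since bounded functions are dense in $L^2(\mu)$, $S_i$ is continuous (it is an isometry), and $\mc H_i$ is closed, it follows that $S_i(L^2(\mu))\subseteq\mc H_i$.

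Finally, for $i\neq j$, by \eqref{eq cyclic decomp} the subspaces $\mc H_i$ and $\mc H_j$ are orthogonal, so for all $f,g\in L^2(\mu)$,
\[
\langle S_j^*S_i f,\,g\rangle_\mu=\langle S_i f,\,S_j g\rangle_\mu=0,
\]
and hence $S_j^*S_i=0$. On the other hand, by the same lemma $S_j^*S_i(f)=S_\s^*(\ol m_j m_i)f=S_\s^*(m_i\ol m_j)f$, so $S_\s^*(m_i\ol m_j)=0$. Together with the diagonal case this is precisely \eqref{eq:S*m_im_j}. The step that I expect to require the most care is the inclusion $S_i(L^2(\mu))\subseteq\mc H_i$: the vectors $m_i$ are only produced as measurable cyclic vectors (via $m_i=h_i(g_i\cs)$, with $g_i$ chosen merely to normalize $S_\s^*(|h_i|^2)$), so one must verify that $m_i$ indeed belongs to $\mc H_i$ and that the passage from bounded $f$ to arbitrary $f\in L^2(\mu)$ is legitimate — here one leans precisely on the isometry of $S_i$, i.e.\ on \eqref{eq:m_i^2}, together with the closedness of $\mc H_i$.
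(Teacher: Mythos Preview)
Your proof is correct and follows essentially the same approach as the paper: both establish the isometry from $S_i^*S_i(f)=S_\s^*(|m_i|^2)f$ and then derive the off-diagonal vanishing from the orthogonality of the cyclic subspaces $\mc H_i$ in the decomposition \eqref{eq cyclic decomp}. The paper computes $\langle \gamma_1 m_i,\gamma_2 m_j\rangle_\mu=\langle g_1\,S_\s^*(m_i\ol m_j),\,g_2\rangle_\mu$ directly on the dense set $\{\gamma m_k:\gamma\in\mathfrak A\}$, whereas you route this through $S_i(L^2(\mu))\subseteq\mc H_i$ and $S_j^*S_i=0$; the underlying computation is the same, and the membership $m_i\in\mc H_i$ that you flag is exactly $m_i=(g_i\cs)h_i\in\mathfrak A\cdot h_i$ (modulo the boundedness of $g_i$, which the paper does not address either).
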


\begin{proof}
We first note that $S^*_i = S^*_\s M_{\ol m_i}$, and 
$$
S^*_iS_i = \mathbb I \ \ \Longleftrightarrow \ \ 
S_\s^*(|m_i|^2) = \mathbbm 1.
$$

Next, we use the fact that cyclic representations, which are 
defined by the distinct vectors $m_i$ and $m_j$, are 
mutually 
orthogonal. Applying the fact that the transfer operator 
$S^*_\s$ has the pull-out property, we compute, for any 
$\gamma_1, \gamma_2 \in \mathfrak A$, 
$$
\ba
\langle \gamma_1 m_i, \gamma_2 m_j\rangle_\mu = & 
\langle (g_1\cs)\; m_i, (g_2 \cs)\;  m_j\rangle_\mu \\
=& \langle (g_1\cs)m_i \ol m_j, S_\s(g_2) \rangle_\mu \\
=& \langle S^*_\s((g_1\cs)m_i \ol m_j), g_2 \rangle_\mu \\
= & \langle g_1 S^*_\s(m_i \ol m_j), g_2 \rangle_\mu.
\ea
$$
Hence, the inner product is zero if and only if 
\eqref{eq:S*m_im_j} holds.
\end{proof}

\begin{theorem}\label{thm main 2}
Let $(m_i : i\in \Lambda)$ be a set of cyclic vectors 
satisfying 
\eqref{eq:m_i^2}. Then $\underline m = (m_i)$ is a wavelet 
filter if and only if $\sum_{i \in \Lambda} S^*_iS_i = 
\mathbb I$. In other words, $\underline m \in \mathfrak M$ if
and only if the operators $S_i$ are the generators of a 
representation of the Cuntz algebra $\mathcal O_{|\Lambda|}$.
\end{theorem}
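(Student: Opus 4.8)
The plan is to deduce the statement directly from the structural facts about the operators $S_i = M_{m_i}S_\s$ that have already been assembled, namely Lemma \ref{lem:orthogonality} and the list of identities for $S_{m_i}$ recorded in the lemma stated just before Theorem \ref{thm main}, together with the characterization of wavelet filters furnished by Theorem \ref{thm main}. First I would record what is automatic under the standing hypotheses. By Lemma \ref{lem:orthogonality}, the normalization \eqref{eq:m_i^2} makes each $S_i$ an isometry, and the cyclic-decomposition hypothesis \eqref{eq cyclic decomp} forces \eqref{eq:S*m_im_j}, i.e. $S_\s^*(m_i\ol m_j)=\delta_{ij}\mathbbm 1$; since $m_i\ol m_j=\ol m_j m_i$ pointwise, this is precisely condition (i) of Theorem \ref{thm main}. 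Feeding \eqref{eq:S*m_im_j} into property (5) of that lemma gives $S_i^*S_j=\delta_{ij}\mathbb I$, so the range projections $E_i:=S_iS_i^*$ are mutually orthogonal. Hence the only Cuntz relation not yet in force is the completeness relation $\sum_{i\in\Lambda}E_i=\mathbb I$.

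Second, I would identify this completeness relation with condition (ii) of Theorem \ref{thm main}. Property (6) of the same lemma gives, for $f\in L^2(\mu)$,
$$
E_i(f)=S_iS_i^*(f)=m_i\,\mathbb E_\s(\ol m_i f),
$$
so that $\sum_{i\in\Lambda}E_i(f)=\sum_{i\in\Lambda}m_i\,\mathbb E_\s(\ol m_i f)$; the series converges in the strong operator topology because the $E_i$ are mutually orthogonal projections, so its partial sums are projections dominated by $\mathbb I$. Therefore $\sum_{i\in\Lambda}S_iS_i^*=\mathbb I$ if and only if $\sum_{i\in\Lambda}m_i\,\mathbb E_\s(\ol m_i f)=f$ for every $f$, which is exactly condition (ii) of Theorem \ref{thm main}.

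Combining the two steps closes the argument: by definition $\underline m\in\mathfrak M$ iff conditions (i) and (ii) of Theorem \ref{thm main} hold; (i) is automatic and (ii) is equivalent to $\sum_i S_iS_i^*=\mathbb I$, which is the claimed equivalence. Finally, the collection of relations now in place --- each $S_i$ an isometry, $S_i^*S_j=\delta_{ij}\mathbb I$, and $\sum_i S_iS_i^*=\mathbb I$ --- is precisely the assertion that $\{S_i:i\in\Lambda\}$ generates a representation of $\mathcal O_{|\Lambda|}$; the associated module decomposition $L^2(\mu)=\bigoplus_{i\in\Lambda}m_i L^2(\mu_{\mc A})$ then follows as in the discussion after Theorem \ref{thm main}.

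Since essentially all the analytic content was handled in the preceding lemmas, I do not expect a serious obstacle. The two points that need care are the strong-operator convergence of the infinite family $\sum_i S_iS_i^*$, and the bookkeeping that it is the cyclic-decomposition hypothesis \eqref{eq cyclic decomp} --- rather than any additional assumption --- that supplies the orthogonality $S_i^*S_j=0$ for $i\neq j$, after which the normalization \eqref{eq:m_i^2} does the rest.
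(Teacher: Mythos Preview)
Your proposal is correct and follows essentially the same route as the paper: both arguments invoke Lemma \ref{lem:orthogonality} to secure condition (i) of Theorem \ref{thm main}, then identify condition (ii) with the completeness relation $\sum_i S_iS_i^*=\mathbb I$ via the identity $S_iS_i^*(f)=m_i\,\mathbb E_\s(\ol m_i f)$ (you cite property (6) of the preceding lemma, the paper recomputes it on the spot). Your added remarks on SOT convergence and on $S_i^*S_j=\delta_{ij}\mathbb I$ are correct refinements but not departures from the paper's line.
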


\begin{proof} To prove the theorem, we use the result of
Theorem \eqref{thm main}. For this, we should check that 
$\{S_i : i\in \Lambda\}$ are the operators satisfying conditions
(i) and (ii) of Theorem \eqref{thm main}. Condition (i) is 
proved in Lemma \eqref{lem:orthogonality}. To verify (ii), we 
observe that, for any $f\in L^2(\mu)$ and $i\in \Lambda$,
$$
\mathbb E_\s (f\ol m_i)m_i = S_iS_i^*(f). 
$$
Indeed,
$$
\ba
\mathbb E_\s (f\ol m_i)m_i = S_iS_i^*(f) = & 
S_\s S_\s^*(f\ol m_i)m_i = S_iS_i^*(f)\\
= & S_i( S_\s^* M_{\ol m_i}(f))\\
= &  S_i S_i^*(f). 
 \ea 
$$
It follows that $\sum_{i \in \Lambda} S^*_iS_i = 
\mathbb I$ if and only if condition (ii) holds. This proves 
the theorem.
\end{proof}

\begin{corollary} Let $\s \in End\sms$ where $\mu$ is a 
probability measure. 
Let $(m_i : i\in \Lambda)$ be a set of cyclic vectors 
satisfying \eqref{eq cyclic decomp}  and \eqref{eq:m_i^2}.
Then, for a.e. $x \in X$, 
$$
\sum_{i\in \Lambda} |m_i(x)|^2 < \infty.
$$
\end{corollary}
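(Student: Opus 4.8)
The plan is to localise on the fibres of $\sigma$ via the disintegration of $\mu$, and then apply Bessel's inequality fibrewise. Let $\xi_\sigma$ be the measurable partition of $X$ into the preimages $C_x=\sigma^{-1}(x)$, with system of conditional measures $(\mu_C)$ furnished by Theorem \ref{thm Rokhlin disintegration}. By Theorem \ref{thm R = S^*} together with \eqref{eq TO via cond syst meas Intro}, the transfer operator is realised as $S^*_\s(g)(x)=\int_{C_x} g\, d\mu_{C_x}$ for $g\in L^2(\mu)$, so all the algebraic identities we have for $S^*_\s$ become fibrewise integral identities.

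The first step is to read off orthonormality on fibres. Applying this realisation to the hypothesis \eqref{eq:m_i^2} and to the conclusion \eqref{eq:S*m_im_j} of Lemma \ref{lem:orthogonality} shows that, for $\mu_{\xi_\sigma}$-almost every fibre $C$, one has $\int_C|m_i|^2\,d\mu_C=1$ and $\int_C m_i\,\ol{m_j}\,d\mu_C=\delta_{ij}$; since $\Lambda$ is countable a single conull set of fibres works for all pairs $i,j$ at once. Thus, for a.e.\ $C$, the restrictions $\{\,m_i|_C : i\in\Lambda\,\}$ form an orthonormal system in $L^2(C,\mu_C)$ --- and it is precisely \eqref{eq:m_i^2} that makes each $m_i$ restrict to an honest (unit) vector of $L^2(C,\mu_C)$, while the orthogonality part rests on the cyclic decomposition \eqref{eq cyclic decomp} through Lemma \ref{lem:orthogonality}.

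The second step is the pointwise bound, and this is where the geometry of $\sigma$ is used. Each fibre $C=\sigma^{-1}(x)$ is at most countable, so $\mu_C$ is a purely atomic probability measure; hence for $\mu_C$-a.e.\ $y\in C$ the atom mass $w_y:=\mu_C(\{y\})$ is positive and $g_y:=w_y^{-1/2}\,\mathbbm 1_{\{y\}}$ is a unit vector of $L^2(C,\mu_C)$. Bessel's inequality for the orthonormal system $\{m_i|_C\}$ applied to $g_y$ gives $w_y\sum_{i\in\Lambda}|m_i(y)|^2=\sum_{i\in\Lambda}|\langle g_y,m_i|_C\rangle|^2\le\|g_y\|^2=1$, so $\sum_{i\in\Lambda}|m_i(y)|^2\le w_y^{-1}<\infty$ for $\mu_C$-a.e.\ $y$. (When $\sigma$ is $N$-to-one with uniform conditional measures this even recovers the classical sub-band bound $\sum_i|m_i|^2\le N$.) To conclude, note that $x\mapsto\sum_i|m_i(x)|^2$ is Borel as an increasing limit of finite sums, so $E:=\{\,x: \sum_i|m_i(x)|^2=\infty\,\}$ is measurable; by \eqref{eq mu(B) integral}, $\mu(E)=\int_{X/\xi_\sigma}\mu_C(E\cap C)\,d\mu_{\xi_\sigma}(C)=0$, because for a.e.\ $C$ the slice $E\cap C$ is contained in the $\mu_C$-null set of non-atoms.

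The step I expect to require the most care is the atomicity used in the second step. One cannot prove the corollary by integrating $\sum_i|m_i|^2$, since $\int_X\sum_i|m_i|^2\,d\mu=\sum_i\|m_i\|^2=\sum_i 1=\infty$ as soon as $\Lambda$ is infinite; finiteness is genuinely a fibrewise almost-everywhere statement, and the argument truly needs that almost every conditional measure $\mu_C$ be purely atomic --- which is the case here because $\sigma$ is at most countable-to-one (indeed $\Lambda$ must index an orthonormal system inside each $L^2(C,\mu_C)$).
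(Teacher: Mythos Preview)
Your approach is genuinely different from the paper's, and in an important way. The paper's argument is the one-line claim that
\[
\sum_{i\in\Lambda}\|m_i\|_\mu^2=1,
\]
obtained by ``substituting $\mathbbm 1$ into the cyclic decomposition \eqref{eq cyclic decomp}'', after which pointwise finiteness would follow by monotone convergence. You instead disintegrate over the fibres $C_x=\sigma^{-1}(x)$, read $S^*_\s(m_i\ol m_j)=\delta_{ij}$ as fibrewise orthonormality of $\{m_i|_C\}$ in $L^2(C,\mu_C)$, and apply Bessel's inequality to the normalized point-mass $w_y^{-1/2}\mathbbm 1_{\{y\}}$ to get the explicit bound $\sum_i|m_i(y)|^2\le \mu_C(\{y\})^{-1}$.

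Your closing paragraph is the crux, and it is correct: hypothesis \eqref{eq:m_i^2} together with $\sigma$-invariance of $\mu$ forces $\|m_i\|_\mu^2=\int_X S^*_\s(|m_i|^2)\,d\mu=\mu(X)=1$ for \emph{every} $i$, so $\sum_i\|m_i\|_\mu^2=|\Lambda|$, not $1$. Thus the integration route the paper takes cannot close when $\Lambda$ is infinite, and decomposing $\mathbbm 1$ in \eqref{eq cyclic decomp} yields $\sum_i\|P_{\mc H_i}\mathbbm 1\|^2=1$, which is a different quantity. Your fibrewise argument is therefore not merely an alternative but appears to be what is actually needed.

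The one place to sharpen is your justification of atomicity. You assert that $\sigma$ is at most countable-to-one, but the ambient framework (Subsection~\ref{ssect endom}) allows continuum-to-one endomorphisms, and your parenthetical ``$\Lambda$ must index an orthonormal system inside each $L^2(C,\mu_C)$'' only gives $\dim L^2(C,\mu_C)\ge|\Lambda|$, which does not imply atomicity. In fact the hypothesis is not removable: for $X=[0,1]^{\N}$ with product Lebesgue measure, $\sigma$ the left shift, and $m_n(t_1,t_2,\dots)=e^{2\pi i n t_1}$ $(n\in\Z)$, both \eqref{eq cyclic decomp} and \eqref{eq:m_i^2} hold (the fibres are copies of $[0,1]$ and $S^*_\s$ is integration in $t_1$), yet $\sum_n|m_n|^2\equiv\infty$. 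So state the countable-to-one (equivalently, atomic-fibre) assumption explicitly as part of your hypotheses rather than trying to derive it.
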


\begin{proof}
The result follows from the fact that 
$$
\sum_{i\in \Lambda} ||m_i||_{\mu}^2 = 1.
$$
Indeed, this relation is obtained when the function 
$\mathbbm 1(x)$ is substituted into the cyclic 
decomposition \eqref{eq cyclic decomp}. 
\end{proof}

\ignore{
Let $\alpha$ be an $*$-endomorphism of a separable Hilbert 
space $\mc H$. It is well known \cite{} that there exists a 
sequence $(S_i : i \in \Lambda)$ of isometries of $\mc H$ 
with orthogonal ranges such that for every $T \in B(\mc H)$
$$
\alpha(T) = \sum_{i\in \Lambda} S_i T S_i^*.
$$
As proved in the operators $(S_i)$ are the generators of a 
representation of the Cuntz algebra $\mc O_{|\Lambda|}$.
}
\bibliographystyle{alpha}
\bibliography{EndomorphismsRef,bibliography-TO}

\end{document}

Our approach in this paper offers four new elements going beyond 
earlier papers dealing with IFS constructions: First (i), our 
present framework is much wider than that of earlier IFS 
constructions. (ii) Our wider context is fundamentally a measure 
space construction, as opposed to earlier IFS constructions which 
have been metric in nature. (iii) Our starting point is a class of 
path spaces, and we establish our IFS solutions as associated 
boundaries. (iv) Finally, we present new applications for 
stochastic processes.

While the theory of dynamical systems encompasses both the 
case of automorphisms of measure spaces and endomorphisms, 
the emphasis has 
been on automorphisms. Among the areas of dynamics based on 
endomorphisms is that of Iterated Function Systems  arising 
as branches 
of inverses of a single fixed endomorphism, see e.g., 
\cite{BezuglyiJorgensen2018(book)}, 
\cite{HerrJorgensenWeber2020},
\cite{Jorgensen2018}, 
\cite{JorgensenTian2020}.
Our present framework includes both the 
more familiar cases when the branches of inverses arise as 
affine mappings in 
$\R^d$, or as branches constructed from Riemann surfaces of 
conformal mappings in one or more complex dimensions. 
Our present approach is operator algebraic: We identify 
a class of induced representations of the Cuntz algebras 
arising from general endomorphisms in measure space.

(2) On the other hand, the group $G$ generates a family of 
maps $\{\beta_f, f \in G\}$ 
$$
\beta_f(\va) =  \frac{f\cs}{\mathbb E(f)} \va,\quad  
\va \in M(\sigma, \mu).
$$
It was proved in Theorem \ref{thm om_mu and om_nu} that 
\be\label{eq intersect}
\beta_f(M(\sigma, \mu)) \cap M(\sigma, \nu) \neq \emptyset
\ee
because the intersection of the two sets in \eqref{eq intersect} 
contains the function $\omega_\nu$. 

(3) We remark that if  $\wt \beta$ is the restriction of
$\beta$ to $\mc F(X, \sB)$, then $\wt\beta$ is an action of 
the corresponding group $\wt G$ and coincides with $\alpha$.
\end{remark}

Let $\FXB_+$ be the set of strictly positive bounded Borel 
functions which can be viewed as a group with 
multiplication. Denote by $TO(\sigma)$ the set of all 
transfer operators $R$ acting on $\FXB$.

Denote by $Inv(\s)$ the set of $\s$-invariant measures
on $(X, \B)$. The composition operator $S_\s$ and its 
adjoint operator $S^*_\s$ are determined in $L^2(\mu)$. 
We will denote the composition operators by $S_\s(\mu)$
in this case. Set 
$$
\mathcal S^* := \{ S^*_\sigma(\mu) : \mu\in Inv(\s)\}.
$$ 

\begin{lemma}
The group $G = \FXB_+$ acts on $TO(\sigma)$ by the formula:
$$
\FXB_+ \ni f : R \longrightarrow f R = M_f R.
$$
Moreover, for every fixed  $R \in TO(\sigma)$, the orbit 
$\FXB_+(R)$ contains all transfer operators from the set
$\mathcal S^*$.  
\end{lemma}

\begin{proof}
We first note that if $R$ is a transfer operator, then  
$R_f := f R$ is also a transfer operator. This is clear 
because $R_f$ is positive and satisfies the pull-out 
property.  

Let $R$ be a transfer operator from $TO(\s)$. 
It follows from Corollary \ref{cor cond exp is ss*} that,  
for $\mu \in Inv(\s)$, $\rho_\mu R = S^*_\sigma(\mu)$.
In other words, 
$$
\mathcal S^* \subset \FXB_+(R),
$$
and the group $\FXB_+$ acts on $\mathcal S^*$ transitively.
\end{proof}

In the context of these properties, we can ask the following 
question:

\textit{Question}: Let $R$ be a normalized transfer 
operator corresponding to an onto endomorphism $\sigma$. 
Define the family of maps $\gamma = \{ \gamma_f : f \in G\}$ 
$M(\sigma, \cdot)$ by the formula
$$
\gamma_h (\va) = \frac{h\cs}{E(h)}\va
 $$
 where $E(h) = R(f)\cs$. Is the property \eqref{eq intersect}
true for  $\gamma$?

Since the  cardinality of  the 
set $\sigma^{-1}(x)$ is a Borel function on $X$, we can
  independently   consider the following classes:  $\sigma$ is either a
   finite-to-one or  countable-to-one map, or  $\sigma^{-1}(x)$ is an
   uncountable  Borel subset for any $x\in X$. 
 In general, we do not require that the  set $\sigma(A)$ is Borel but if 
  $\sigma$ is at most countable-to-one, then this property holds
automatically. 

We denote by $End(X, \B)$ the semigroup (with respect to the 
composition) of all surjective endomorphisms of the standard Borel 
space $(X, \B)$.

Given an endomorphism $\sigma$ of $(X, \B)$,  we denote by 
$\sigma^{-1}(\B)$ the proper subalgebra of $\B$ consisting of 
 sets $\sigma^{-1}(A)$ where $A$ is any set from $\B$.

We will use endomorphisms mostly in the context of  standard 
measure spaces $\sms$ with a finite (or sigma-finite) 
measure $\mu$.  Any endomorphism $\sigma$ of $(X, \B, \mu)$ 
defines an action on the set of measures $M(X)$ by 
$$ 
\mu \mapsto \mu\circ\sigma^{-1} : M(X) \to M(X),
$$
where $(\mu\circ\sigma^{-1})(A) := \mu(\sigma^{-1}(A))$. For a 
fixed measure $\mu$, it is said that $\sigma$ is a 
\textit{non-singular endomorphism} \index{endomorphism ! non-singular} 
(or equivalently that $\mu \in M(X)$ is 
a  (backward)  \emph{quasi-invariant measure} \index{measure ! 
quasi-invariant} 
 with respect to $\sigma$) if   $\mu\csi1$ is equivalent to $\mu$, i.e., 
  $$
\mu(A) = 0 \  \Longleftrightarrow \ \mu(\sigma^{-1}(A)) = 0,\ \ \
 \forall A\in \B.
$$
 Let $End\sms$ denote the set of all non-singular endomorphisms of 
 $\sms$.
 
\textit{ In this book, we consider only non-singular endomorphisms of 
 standard measure spaces.} \index{standard measure space}
 In general, $\sB$ can be arbitrary 
 sigma-subalgebra of $\B$. We will also assume that $(X, \sB)$ and
 $(X, \sB,  \mu_\sigma)$ are standard Borel 
 measure spaces, respectively,  where  $\mu_\sigma$ is  the 
 restriction of $\mu$ to  $\sigma^{-1}(\B)$.

If $\mu(\sigma^{-1}(A))   = \mu(A)$ for any measurable set $A$, 
then $\sigma$ is called a  \textit{measure 
preserving endomorphism} 
\index{endomorphism ! measure preserving}, and $\mu$ is 
called  a \textit{$\sigma$-invariant measure}. \index{measure ! invariant} 

In some cases, we will also need the notion of a 
\textit{forward  quasi-invariant measure} \index{measure ! forward  
quasi-invariant} $\mu$. 
This means that, for every $\mu$-measurable  set 
$A$, the set  $\sigma(A)$ is measurable and $\mu(A) = 0 \ 
  \Longleftrightarrow  \ \mu(\sigma(A)) = 0$. For an at most   
  countable-to-one non-singular  endomorphism $\sigma$, this property 
  is automatically true. On the other hand, it is not hard to construct 
an endomorphism  $\sigma$ of a  measure space $\sms$ such that $
  \sigma$ is not  forward quasi-invariant   with respect to $\mu$.

It is worth noting that, for standard
 measure spaces $\sms$ and non-singular $\sigma$, $\sigma(A)$ 
 is measurable when $\sigma$ satisfies the condition: $\mu(B) =0 
 \Longrightarrow \mu(\sigma(B)) = 0$ for any Borel set $B$.

\begin{lemma}[\cite{BezuglyiJorgensen2018(book)}]
\label{lem existence of q-inv measure for  sigma}
Let $\sigma$ be a surjective endomorphism of a standard Borel 
space $(X, \B)$. Then $M(X)$ always contains a 
$\sigma$-quasi-invariant measure $\mu$.  \index{standard Borel space}
\end{lemma}

\begin{proof}
Every endomorphism  $\sigma$ generates a countable Borel equivalence
 relation $E(\sigma)$ whose classes are the orbits of $\sigma$ (see 
 Subsection \ref{sect 7_2} for more details).  
 Quasi-invariant measures for $\sigma$ coincide
 with quasi-invariant measures for $E(\sigma)$.  Then we can use 
\cite[Proposition 3.1]{DoughertyJacksonKechris1994} where the existence 
of $E(\sigma)$-quasi-invariant measures was proved.
\end{proof}


We will keep the following notation for a surjective endomorphism 
$\sigma$ of $\FXB$:
$$
\mc Q_{-} = \{\mu \in M(X) : \mu \csi1 \sim \mu\},
$$
$$
\mc Q_{+} = \{\mu \in M(X) : \mu \cs \sim \mu\}. 
$$  
The latter should be understood as follows: if $A \in \B$ and 
$\sigma(A) \in \B$, then $\mu(A) = 0$ if and only if $\mu(\sigma A) =0$.
This remark is used in all cases when we work with the measure $\mu\circ 
\sigma$.

It is known that there are Borel endomorphisms $\sigma$ of $(X, \B)$
for which there exists no \textit{finite} $\sigma$-invariant measure,
see e.g. \cite{DoughertyJacksonKechris1994}.

\begin{remark} \label{rem RN derivatives for sigma}
Quasi-invariance of $\mu$ with respect to an endomorphism
 $\sigma$ of $\sms$ (backward and forward) allows us to
 define the notion of Radon-Nikodym derivatives 
 \index{Radon-Nikodym derivative} of measures $\lambda
 \csi1$ and $\lambda \cs $ with respect to $\lambda$:
$$
\theta_\lambda(x) = \frac{d\lambda\csi1}{d\lambda}(x) 
$$
and
$$
 \omega_\lambda(x) =  \frac{d\lambda\circ\sigma}{d\lambda}(x).
$$
In other words, for any function $f\in L^1(\lambda)$, one has
$$
\int_X f\circ\sigma \; d\lambda = \int_X f \theta_\lambda\; d\lambda
$$
 and
$$
 \int_X (f\circ\sigma) \; \omega_\lambda\; d\lambda = 
 \int_X f \; d\lambda.
$$
To justify these relations, we observe that  $\lambda$ and 
$\lambda\circ \sigma$ are well defined measures when they are
 considered on the subalgebra  $\sigma^{-1}(\B)$. When $\sigma$ 
 is forward  quasi-invariant with respect to $\lambda$, we can 
 uniquely  define   the $\sigma^{-1}(\B)$-measurable function 
 $\omega_\lambda(x)$.  Since $\theta_\lambda\circ\sigma$ is also 
$\sigma^{-1}(\B)$-measurable, then, by uniqueness of the Radon-Nikodym 
derivative, we obtain that  \index{Radon-Nikodym derivative}
$$
\omega_\lambda(x) = \frac{1}{\theta_\lambda}(\sigma x).
$$
\end{remark}

The following fact is obvious.

Here we define the most important dynamical properties of 
endomorphisms. 

The nested (filtered) family of sigma-algebras from Definition \ref{def 
ergodic and exact} is a 
recurrent theme in symbolic dynamics, in multiresolution analysis, 
and in ergodic theory;-- for 
details, see, for instance, \cite{Kakutani1948}, \cite{Rohlin1961}, 
\cite{Ruelle1989},  \cite{CornfeldFominSinai1982}, 
\cite{Jorgensen2001}, \cite{Jorgensen2004}, \cite{Hawkins1994}.
 A main theme in our work is to point out that this basic filtered 
 system has three incarnations in our 
analysis, each important in a systematic study of transfer operators.

In more detail: The starting point for our study of infinite-dimensional 
analysis of transfer operators is a fixed system $(X, \B, R, \sigma)$ as 
specified above, i.e., a fixed transfer operator $R$, subject
to the pull-out property for $\sigma$, as in Definition \ref{def transfer 
operator from Intro}. The three 
incarnations we have in mind of the scale of sigma-algebras from 
Definition  \ref{def 
ergodic and exact} are: (i) measure-theoretic (Sections \ref{sect TO
 on measurable spaces} and \ref{sect integrable operators}), (ii) 
geometric/symbolic (Sections \ref{sect TO
 on measurable spaces}, \ref{sect TO on densities},  
 and \ref{sect Example IFS}), and (iii) operator 
theoretic (Sections  \ref{sect L1 and L2}, \ref{sect Wold}, and
  \ref{sect Universal HS}). In each of these settings, we show 
that when $(X, \B, R, \sigma)$ is given, then the system from 
Definition \ref{def 
ergodic and exact} induces corresponding scales of measures, of 
certain closed 
subspaces in a suitable universal Hilbert space, and in geometric 
systems of self-similar scales; referring to (i)-(iii), respectively. The 
details and the applications of these three correspondences will be 
presented systematically in the respective sections (below), inside 
the body of the book.

The endomorphism $\sigma$ defines the \textit{Koopman operator} 
$S_\sigma$ acting on $L^2(X, \mu)$, $S_{\sigma} (f) = f\circ \sigma$. 
The operator $S_\sigma$ is  also called  a \textit{composition operator}
in the literature. 

The following fact is well known.

\begin{remark}
It is useful to remind a result from \cite{Rosenthal1988} saying that, for every
measure preserving endomorphisms $T$ of $(X, \F, \mu)$, there exists a
\textit{strictly ergodic} continuous map $S$ of a zero-dimensional 
compact measure space  
$(Y, \mathcal A, \nu)$  such that the dynamical systems $(X, \F, \mu, T)$ 
 and $(Y, \mathcal A, \nu, S)$ are measure theoretically isomorphic. Recall
  that $S$ is strictly ergodic if the $S$-invariant measure is unique. 
\end{remark}

Suppose the endomorphism  $\sigma$ preserves the measure $\mu$.
 Denote by $S_{\sigma}^*$ a co-isometry, i.e., $S^*_{\sigma}$ is an 
operator on 
$L^2(\mu)$ satisfying the properties: (i) $S_{\sigma}^*S_{\sigma} = Id$, 
(ii) $S_{\sigma} S^*_{\sigma}$ is a projection operator. In fact, it can be
easily seen that the operator 
$\mathbb E=  S_{\sigma} S^*_{\sigma}$ is the \textit{conditional 
expectation}  onto the subspace $\{ f \circ \sigma : f \in L^2(X, \mu)\}$ of
 $L^2(\mu)$.
  
\begin{lemma} For $\sigma \in End(\sms)$ and the associated Koopman operator  $S_\sigma$, the adjoint operator  $S_\sigma^*$ acts by 
the formula:
$$
S_\sigma^*(g) = \frac{(gd\mu)\sigma^{-1}}{d\mu}, \quad g\in L^2(\mu).
$$
The adjoint operator $S_\sigma^*$ satisfies the pull-out property:
$$
S_\sigma^*(g(f\cs)) = f S_\sigma^*(g).
$$
\end{lemma}   

\begin{proof}

\end{proof}

Fix an endomorphism $\sigma \in End(\sms)$. Let $m : X \to \mathbb C$ 
be a Borel complex-valued function. Denote
by $M_m (f) = mf$ the multiplication operator in $L^2(\mu)$.
Set $S_m = M_m S_\sigma = m (f\sigma)$

\bibliographystyle{alpha}
\bibliography{references_GBD(end),bibliography-TO}

\end{document}